\newcommand{\rev}[1]{#1}
\newtheorem{theorem}{Theorem}[section]
\newtheorem{corollary}[theorem]{Corollary}
\newtheorem{lemma}[theorem]{Lemma}
\newtheorem{proposition}[theorem]{Proposition}
\theoremstyle{definition}
\newtheorem{example}[theorem]{Example}
\newtheorem{remark}[theorem]{Remark}
\newtheorem{question}[theorem]{Open Problem}
\newcommand{\R}{\mathbb R}
\newcommand{\Z}{\mathbb Z}
\newcommand{\Diff}{\operatorname{Diff}(S^1)}
\newcommand{\Diffp}{\operatorname{Diff}^+(S^1)}
\newcommand{\id}{\mathrm{id}}
\newcommand{\Imm}{\mathrm{Imm}}
\newcommand{\OmMW}{\Omega^{\operatorname{MW}}}
\def\ga{\gamma}
\def\ze{\zeta} 
\def\th{\theta} 
\def\ka{\kappa} 
\def\la{\lambda}
\def\ta{\tau} 
\def\ph{\varphi}
\def\om{\omega} 
\def\Ga{\Gamma} 
\def\Th{\Theta} 
\def\La{\Lambda} 
\def\Ph{\Phi} 
\def\Om{\Omega}
\def\o{\operatorname{\circ}} 
\def\i{^{-1}} 
\def\x{\times}
\def\p{\partial} 
\def\X{{\mathfrak X}}
\def\L{\mathcal{L}}
\def\F{\mathcal{F}}
\def\J{\mathcal{J}}
\def\F{\mathcal{F}}
\def\R{{\mathbb R}}
\def\exp{\operatorname{exp}}
\let\on=\operatorname
\newcommand{\simtime}[1]{\vspace{-5mm}\subcaption*{$t={#1}$}}
\title{Symplectic structures on the space of space curves}
\keywords{space of space curves, symplectic stuctures}
\subjclass[2020]{58D10, 53D05%
}
\author{Martin Bauer, Sadashige Ishida, and Peter W. Michor}
\begin{document}

\begin{abstract}
   We present symplectic structures on the shape space of unparameterized space curves that generalize the classical Marsden-Weinstein structure. Our method integrates the Liouville 1-form of the Marsden-Weinstein structure with Riemannian structures that have been introduced in mathematical shape analysis. We also derive Hamiltonian vector fields for several classical Hamiltonian functions with respect to these new symplectic structures. 
\end{abstract}

\maketitle


\setcounter{tocdepth}{1}
\tableofcontents
\section{Introduction}
\subsection*{Motivation and background:}
The space of unparametrized space curves $$\rev{B_i(S^1,\R^3)\coloneqq\Imm(S^1,\mathbb R^3)/\Diff}$$ as an infinite dimensional orbifold is known to have a symplectic structure called the Marsden-Weinstein structure 
 (MW-structure) ~\cite{MarsdenWeinstein1983}. It is thought of as a \emph{canonical} symplectic structure as it is formally a Kirillov-Kostant-Souriau form by regarding space curves as linear functionals on the space of divergence-free vector fields in $\R^3$;  see eg.~\cite[Theorem 4.2]{MarsdenWeinstein1983} and \cite[Chapter VI, Proposition 3.6]{ArnoldKhesin2021}. Another incentive for studying the MW symplectic structure can be found in its appearance in mathematical fluid dynamics:  for example, one can interpret vortex filaments as the MW flow of the kinetic energy of the velocity field induced by vorticity concentrated on the curve. 
 Via so-called localized induction approximation vortex filaments reduce to the binormal flow, which is a completely integrable system and is again an MW flow for the length functional as the Hamiltonian, see eg.~\cite[Chapter 11]{Saffman1993vortex} or \cite[Chapter 7]{Majda_Bertozzi2001incompressibleFlow} and the references therein.

To the best of the authors' knowledge, to date no symplectic structures other than the MW form have been studied on the space of unparametrized space curves. Riemannian structures on this space, on the other hand, have attracted a significant amount of interest;  primarily due to their relevance to mathematical shape analysis~\cite{younes2010shapes,srivastava2016functional,bauer2014overview}. The arguably most natural such metric, the reparametrization invariant $L^2$-metric admits a surprising degeneracy: the geodesic distance between any pair of curves vanishes on both the space of parametrized and unparametrized curves~\cite{michor2005vanishing,bauer2012vanishing}. This result renders the $L^2$-metric unsuited as a basis for mathematical shape analysis and thus started a quest for stronger Riemannian metrics, which induce a non-degenerate distance function and consequently can be used for applications in these areas, see eg.~\cite{michor_mumford2006,yezzi2005conformal,srivastava2010shape,bauer2017numerical} and the references therein. 

 The aforementioned $L^2$-Riemannian metric and the MW symplectic structure are related via an almost complex structure, which is induced on shape space by the cross-product with the unit tangent vector of the curve $c$, i.e., $J_c(h):=\frac{c_\theta}{|c_\theta|} \x h$; here $c:S^1\to\mathbb R^3$ is a space curve and $h:S^1\to\mathbb R^3$ is a tangent vector to $c$. Furthermore, the MW symplectic structure $\bar \Om^{\on{MW}}$ has a Liouville 1-form $\bar\Theta^{\operatorname{MW}}$ i.e., $\bar \Om^{\on{MW}}=-d \bar\Theta^{\operatorname{MW}}$, which arises from the $L^2$-metric $G$ and the almost complex structure via 
 \begin{equation}
 \bar\Theta^{\operatorname{MW}}_{\bar c}(\bar h):= -\frac{1}{3} G_c(J_c(c),h)
 \end{equation}
 \rev{where $\bar c$ and $\bar h$ are an element and a tangent vector on $B_i(S^1,\R^3)$ related to $c$ and $h$ by the projection from $\Imm(S^1,\R^3)$ to $B_i(S^1,\R^3)$.}
\subsection*{Main contributions:} 
These relations between Riemannian geometry and symplectic geometry on the space of space curves are the starting point of the present article: our principal goal is to construct new symplectic structures on the space of unparametrized curves by combining the above classical construction with more recent advances in Riemannian geometry of these spaces, i.e., we construct new presymplectic structures by modifying the Liouville form of the MW form using different Riemannian metrics from mathematical shape analysis. This construction automatically leads to a closed 2-form (and thus a presymplectic form) on the space of parametrized curves. Under certain assumptions on the Riemannian metric this form then descends to a presymplectic structure on the space of unparametrized space curves and thus it only remains to check the non-degeneracy of this 2-form to conclude that it is (weakly) symplectic. Proving this property turns out to be surprisingly difficult and provides the main technical contribution of the present article. Interestingly, in some cases the presymplectic form still has a nontrivial kernel on the shape space, but becomes symplectic when the quotient by a further 2-dimensional foliation is taken.

We also derive formulae for Hamiltonian vector fields of several classical Hamiltonian functions generated by our new symplectic structures and provide numerical illustrations to qualitatively show a few simple examples among these new Hamiltonian flows. The Riemannian counterparts can be found in the area of geometric gradient flows on the space of curves, where the investigations of gradient flows for certain well-known Energy functionals (e.g. the entropy or length energy functional) for Riemannian metrics other than the $L^2$-Riemannian metric has been recently initiated, see e.g. the work of ~Okabe, Schrader, Wheeler and Wheeler~\cite{okabe2023sobolev,schrader2023h}.
In our investigations we observe that, for certain choices of Hamiltonian and symplectic structure, we obtain a new representation of well-known Hamiltonian flows, i.e., we may reproduce Hamiltonian flows of the MW symplectic structure from a different pair of a symplectic structure and a Hamiltonian function. For other examples, we obtain genuinely new Hamiltonian flows, which do not seem to be represented as a Hamiltonian flow for the MW symplectic structure. 


\rev{A seemingly more straightforward approach to obtain new symplectic structures  can be found in directly defining a new skew-symmetric 2-form via alternating the Riemannian metric and combining it with the almost complex structure $\J$ from the MW symplectic structure. This approach turns out to be somewhat unsuccessful as the resulting skew-symmetric 2-form is usually not closed and thus not even pre-symplectic. We discuss this approach and the resulting 2-forms in Appendix~\ref{sec:almost_symp_approach}. This further highlights the non-trivial challenge of finding a 2-form which is both closed and non-degenerate, rather than one being merely closed or non-degenerate. This observation was our original incentive to follow the slightly more complicated procedure described above. Finally, in} Appendix \ref{app:weak_symplectic}, we provide a short introduction to infinite-dimensional weak symplectic geometry, including a new assumption that was overlooked in previous research.

\subsection*{Future directions:}
In this article, we introduced new (pre)symplectic structures on the shape space of space curves. Our procedure of modifying the Liouville form of a (pre)symplectic form and taking the exterior derivative is not limited to such shape spaces. It would be interesting to apply the same machinery for other infinite-dimensional (weak-)symplectic manifolds that admit Liouville forms such as the space of complex functions on a domain or the cotangent bundle of an infinite-dimensional Riemannian manifold. 

At this point, the connection between new Hamiltonian flows and existing physical theories seems to remain unclear. Hence we are also keen to use this new framework to find new interpretations of physically relevant quantities as Hamiltonian flows, in a similar way that certain compressible fluids are modeled as geodesic flows of higher order metrics \cite{MichorMumford2013EPDiff}. 

\subsection*{Structure of the article:}
In Section \ref{sec:Liouville_and_presymplectic}, we introduce Liouville forms via the modification of the $L^2$-Riemannian metric, and then compute presymplectic forms by taking the exterior derivative. In Section \ref{sec:conformal}, we show that, a class of presymplectic structures attained by conformal factors on the shape space are indeed weekly symplectic. We also derive Hamiltonian vector fields with respect to these weak symplectic structures. In Section \ref{sec:symp_length}, we describe more concretely symplectic structures induced by the length function as a special case of conformal factors and provide several examples of Hamiltonian  vector fields. In Section \ref{sec:curvature_weighted}, we discuss the presymplectic structure induced by the curvature-weighted metric, where we leave the non-degeneracy open for future research.
Finally, in Section \ref{sec:numerics}, we numerically illustrate simple Hamiltonian flows with respect to symplectic structures induced by length functions.

\subsection*{Acknowledgements:}
The authors are grateful to Boris  Khesin for valuable comments on the MW symplectic structure and S. Ishida thanks Albert Chern for insightful discussions on space curves and Chris Wojtan for his continuous support.
M. Bauer was partially supported by NSF grant DMS-1953244 and by the Binational Science Foundation (BSF). S.~Ishida was partially supported by ERC Consolidator Grant 101045083 ``CoDiNA'' funded by the European Research Council. Some figures were generated by the software Houdini and its education license was provided by SideFX.
\section{Liouville structures and (pre)symplectic structures}\label{sec:Liouville_and_presymplectic}

\subsection*{The space of parametrized and unparametrized curves}
We consider the space of regular space curves:
\begin{equation}
  \Imm(S^1,\mathbb R^3):=\left\{c\in C^{\infty}(S^1,\mathbb R^3): |\rev{\partial_{\theta}c}|\neq 0\right\},
\end{equation}
which consists of immersions of $S^1$ into $\R^3$.
The space $\Imm(S^1,\mathbb R^3)$ is an open subset of the vector space $C^{\infty}(S^1,\mathbb R^3)$ and thus, similar as in finite dimensions, it  is a manifold with tangent space \rev{given by the sections of the pullback bundle by $c$},
\begin{align}
  T_c\Imm(S^1,\mathbb R^3)=C^{\infty}(S^1,\mathbb R^3). 
\end{align}
\rev{From now on, we will denote differentiation w.r.t $\theta$ by a subscript, i.e., we write $\partial_\theta c = c_\theta$ and $\partial_\theta h=h_\theta$ for $c\in \Imm(S^1,\R^3)$ and $h\in T_c \Imm(S^1,\R^3)$. Furthermore, we will occasionally consider constant vector fields on $\Imm(S^1,\mathbb R^3)$ obtained by extending tangent vectors $h\in T_c\Imm(S^1,\R^3)$ at some $c$ to the entire $\Imm(S^1,\mathbb R^3)$. We will denote this vector field also by $h$ for simplicity, without explicitly stating so whenever it is clear from the context.}

On the manifold of immersions we consider the action of the group of orientation-preserving diffeomorphisms $\Diffp$ by composition from the right. This leads us to consider the quotient (shape) space \rev{via the projection}
\begin{equation}
\rev{\pi: \Imm(S^1,\mathbb R^3)\to}\, B_i(S^1,\mathbb R^3):=\Imm(S^1,\mathbb R^3)/\Diffp,
\end{equation}
which is an infinite dimensional orbifold with finite cyclic groups at the orbifold singularities, see~\cite{cervera1991action} and \cite[7.3]{Michor20}. The tangent space to the vertical fiber through $c$ consist exactly of all fields $h$ that are tangent to its foot point $c$, i.e., $h=a.c_\theta$ with $a\in C^{\infty}(S^1)$. 
 
\subsection*{Reparametrization invariant Riemannian metrics on spaces of curves}
On the space of parametrized curves we will consider reparameterization invariant (weak)-Riemannian metrics of the form:
\begin{align}
  G_c^L(h,k)&= \int_{S^1} \langle L_c h,k\rangle |c_\theta| d\theta=\int_{S^1} \langle h,L_ck\rangle |c_\theta| d\theta
\end{align}
where $ L \in \Gamma(\on{End}(T\Imm(S^1,\mathbb R^3))$ is an operator field, \rev{depending smoothly on $c\in \Imm(S^1,\mathbb R^3)$ such that for each fixed curve $c$} the operator 
\begin{equation}
 L_c:T_c\Imm(S^1,\mathbb R^3)=C^\infty(S^1,\mathbb R^3) \to T_c\Imm(S^1,\mathbb R^3)=C^\infty(S^1,\mathbb R^3)   
\end{equation}
is an elliptic pseudo differential operator that is equivariant under the right action of the diffeomorphism group $\Diffp$ and also under the left action of  $SO(3)$, and which is also self-adjoint with respect to the $L^2$-metric, i.e.,
\begin{equation}
L_{c\circ\varphi}(h\circ\varphi)=(L_c(h))\circ\varphi \quad \text{ and }
\int \langle L_c h, k\rangle ds = \int \langle h , L_c k\rangle ds\;.
\end{equation}
\begin{remark}[Sobolev metrics]\label{rem:Sobmetric}
An important class of such metrics is the class of Sobolev $H^m$-metrics,
where $L=(1-(-1)^m D_s^{2m})$ with $D_s=\frac1{|c_\theta|}\partial_{\theta}$ being the arclength derivative. 
Using the notation $ds=|c_\theta|d\theta$ for the arclength measure  we 
obtain for $m=0$ the metric
\begin{align}
  G_c^{\operatorname{id}}(h,k)&= \int_{S^1} \langle h,k\rangle |\rev{c_\theta}| d\theta=\int_{S^1} \langle h,k\rangle ds  
\end{align}  
and for $m=1$ the metric
\begin{align}
  G_c^{\operatorname{id}-D_s^2}(h,k)&
  = \int_{S^1} \langle h,k\rangle  +\langle - D_s^2 h, k \rangle ds
  = \int_{S^1} \langle h,k\rangle  +\langle D_s h,D_s k \rangle ds.
\end{align}
All these metrics can be written in terms of arc-length derivative \rev{$D_s=\frac{1}{|c_\theta|}\partial_\theta$} and arc-length integration $ds=|c_\theta|d\theta$ only. It has been shown that each such metric induces a corresponding metric on the shape space $B_i(S^1,\R^3)$ such that the projection \rev{$\pi:\Imm(S^1,\R)\to B_i(S^1,\R^3)$} is a Riemannian submersion~\cite{michor_mumford2007}. In finite dimension this would follow directly from the invariance of the metric, but in this infinite dimensional situation one has to show in addition the existence of the horizontal complement (w.r.t. the Riemannian metric). We will see, however, that this particular class of metrics will not be suited for the purpose of the present paper, as the induced symplectic structure will not descend to a symplectic structure on the quotient space. 
\end{remark}
\subsection*{The induced Liouville 1-form}
Next we will use the metric $G^L$ to define a (Liouville) 1-form on $\Imm(S^1,\mathbb R^3)$. Therefore we consider for $c\in \Imm(S^1,\mathbb R^3)$ and $h\in T_c\Imm(S^1,\mathbb R^3)$ the 1-form: 
\begin{equation}\label{eq:Liouvilleform}
\Theta^L_c(h)\coloneqq G^L_c(c\times D_s  c, h)=\int \langle c\times D_sc, L_c h\rangle ds=\int \operatorname{det}(c, D_sc, L_c h) ds,
\end{equation}
where $\times$ denotes the vector cross product on $\mathbb R^3$. We have the following result concerning its invariance properties:
\begin{lemma}[Liouville 1-form]\label{lem:Liouville_one_form}
For any inertia operator $L$, which is equivariant under the right action of the group of all orientation preserving diffeomorphisms and the left action of the rotation group $SO(3)$, the induced Liouville 1-form $\Theta^L$ is invariant under the right action of $\Diffp$ and the left action of $SO(3)$, i.e., for any $c\in \Imm$, $h\in T_c\Imm$, $\varphi\in \Diffp$ and $O\in SO(3)$ we have 
\begin{equation}
\Theta^L_{O(c\circ\varphi)}(O(h\circ\varphi))=\Theta^L_c(h).
\end{equation}
\begin{proof}
We will only show the reparametrization invariance, the invariance under $SO(3)$ is similar but easier.
Using the equivariance of both $L$ and $D_s$ we calculate
\begin{equation*}
\Theta^L_{c\circ\varphi}(h\circ\varphi)=\int \langle c\circ\varphi \times (D_sc)\circ\varphi, (L_ch)\circ\varphi\rangle|c_\theta|\circ\varphi |\varphi'|\; d\theta=\int \langle c\times D_sc, L_c h\rangle ds=\Theta^L_c(h).\qedhere
\end{equation*}
\end{proof}
\end{lemma}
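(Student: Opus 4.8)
The plan is to verify the asserted identity by a direct change-of-variables computation. Since the combined action factors as $(O,\varphi)\cdot c=(Oc)\circ\varphi$, the two group actions commute, so it suffices to check invariance separately under the right $\Diffp$-action and under the left $SO(3)$-action.

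For the reparametrization invariance I would start from the determinant form $\Theta^L_c(h)=\int_{S^1}\det(c,D_sc,L_ch)\,ds$ and substitute $c\mapsto c\circ\varphi$, $h\mapsto h\circ\varphi$. Three ingredients enter: (i) the equivariance of the arclength derivative, $(D_sc)\circ\varphi=D_s(c\circ\varphi)$, which holds because $\varphi'>0$ makes the factor $|\varphi'|$ produced by $\partial_\theta(c\circ\varphi)$ cancel the one appearing in $|(c\circ\varphi)'|$; (ii) the assumed equivariance $L_{c\circ\varphi}(h\circ\varphi)=(L_ch)\circ\varphi$; (iii) the measure transformation $|(c\circ\varphi)'|\,d\theta=(|c'|\circ\varphi)\,|\varphi'|\,d\theta$. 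Putting these together, the new integrand equals $\big(\det(c,D_sc,L_ch)\circ\varphi\big)\,(|c'|\circ\varphi)\,|\varphi'|$, and the substitution rule $\int_{S^1}(f\circ\varphi)|\varphi'|\,d\theta=\int_{S^1}f\,d\theta$ (again using $\varphi'>0$) recovers $\Theta^L_c(h)$ exactly.

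For the $SO(3)$-invariance I would use that a constant rotation $O$ commutes with $\partial_\theta$, preserves $|c'|$ and hence both the arclength measure and the arclength derivative, satisfies the assumed equivariance $L_{Oc}(Oh)=O(L_ch)$, and --- the one genuinely special point --- is compatible with the triple product: $\det(Ou,Ov,Ow)=(\det O)\det(u,v,w)=\det(u,v,w)$ because $\det O=1$. Since the measure is unchanged, the integral is unchanged.

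I do not expect a real obstacle; the only points needing care are that \emph{orientation-preservation} of $\varphi$ is exactly what makes $D_s$ equivariant (for an orientation-reversing diffeomorphism the single occurrence of $D_s$ would flip sign while $ds$ would not, and the one-form would pick up a minus sign --- this is why one restricts to $\Diffp$), and that the position vector $c$ itself, not merely $c'$, appears in the integrand, so the $SO(3)$-step genuinely relies on $\det O=1$ and the statement would fail for $O(3)$ or if translations were adjoined to the group.
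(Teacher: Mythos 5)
Your proof is correct and follows essentially the same route as the paper: a direct change-of-variables computation using the equivariance of $L$ and $D_s$ together with the transformation of the arclength measure, with the $SO(3)$ case reduced to $\det O=1$ and the rotation-equivariance of $L$ (the paper states this case is ``similar but easier'' and omits it, while you supply it). Your closing observations about why $\Diffp$ rather than $\Diff$ and $SO(3)$ rather than $O(3)$ are needed are accurate and consistent with the paper's hypotheses.
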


\begin{remark}
    If $L$ is equivariant under the left action of not only $SO(3)$ but of the larger group $SL(3)=\{M\in GL(3,\R)\mid \det(M)=1\}$, then also $\Th^L$ is invariant under $SL(3)$. This is the case for the Marsden-Weinstein structure $L=\id$ (see Remark \ref{rmk:Marsden-Weinstein}), but in general not for the inertia operators we deal with in this article.
\end{remark}

\subsection*{The induced (pre)symplectic form on $\Imm(S^1,\mathbb R^2)$}
Once we have defined the 1-form $\Theta$ we can formally consider the induced symplectic form
\begin{equation}\label{eq:definition_of_Omega}
\Omega^L_c(h,k)\coloneqq -d\Theta^L_c(h,k)=-D_{c,h}\Theta^L_c(k)+D_{c,k}\Theta^L_c(h)+\Theta^L_c([h,k]),
\end{equation}
where $d$ denotes the exterior derivative, $D_{c,h}$ denotes the directional derivative at  $c\in \Imm(S^1,\R^3)$ in the direction $h$, and when applied to a function $f\colon \Imm(S^1,\R^3)\to \R$, we have $D_{c,h}f=\L_h f (c)$. The bracket $[h,k]$ is the Lie-bracket in $\X(\Imm(S^1,\mathbb R^3))$ given by  $[h,k]=D_{c,h}k-D_{c,k}h$.

 In the following theorem we calculate  this 2-form explicitly:
\begin{theorem}[The (pre)symplectic form $\Omega^L$ on parametrized curves]\label{lem:formula_sympl}
Let $c\in \Imm(S^1,\mathbb R^3)$ and $h,k\in T_c\Imm(S^1,\mathbb R^3)$. We have
\begin{equation}\label{eq:Omega_L}
\begin{aligned}
\Omega^L_c(h,k)&=\int\Big(\langle D_s c, L_c h \x k + h\x L_c k\rangle 
     - \langle c, D_s h \x L_c k + L_ch\x D_s k  \rangle 
    \\&\qquad\qquad\qquad\qquad
     + \langle c\x D_s c, (D_{c,k}L_c)h-(D_{c,h}L_c)k\rangle\Big) ds 
\end{aligned}
\end{equation}
Furthermore, $\Omega^L$ is invariant under the right action of $\Diffp$ and under the left action of $SO(3)$. 
\end{theorem}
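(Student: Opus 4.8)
The plan is to compute the exterior derivative of $\Theta^L$ directly from the Cartan-style formula already written in \eqref{eq:definition_of_Omega}, namely
\[
\Omega^L_c(h,k)= -D_{c,h}\Theta^L_c(k)+D_{c,k}\Theta^L_c(h)+\Theta^L_c([h,k]),
\]
and to organize the computation so that the term involving the Lie bracket cancels the terms produced when the directional derivative hits the arguments $h$ and $k$ themselves (which are treated as constant vector fields on the affine space $C^\infty(S^1,\R^3)$, so in practice $D_{c,h}k$ and $D_{c,k}h$ vanish and $[h,k]_c=0$; the bracket term is therefore present only for bookkeeping and the genuinely surviving contributions come from differentiating the geometric ingredients $c$, $D_s c$, the arclength measure $ds$, and the operator $L_c$).

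First I would write $\Theta^L_c(k)=\int \langle c\times D_s c,\, L_c k\rangle\, ds$ and apply $D_{c,h}$ using the product rule, collecting the four sources of $h$-dependence: (i) the explicit $c$ in the cross product, giving $\int\langle h\times D_s c, L_c k\rangle\,ds$; (ii) the arclength derivative $D_s c=\frac1{|c'|}c'$, whose variation is a standard shape-space formula $D_{c,h}(D_s c)=D_s h-\langle D_s h, D_s c\rangle D_s c$; (iii) the arclength measure, with $D_{c,h}(ds)=\langle D_s h, D_s c\rangle\, ds$; and (iv) the operator, giving the term $\int\langle c\times D_s c,\,(D_{c,h}L_c)k\rangle\,ds$. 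Crucially, the measure-variation term from (iii) combines with the second piece of (ii) because $\langle D_s c, D_s c\rangle=1$, so $\langle (D_{c,h}D_s c), L_ck\rangle + \langle D_s c,L_ck\rangle \langle D_s h,D_s c\rangle$ simplifies, leaving just $\langle D_s h, L_c k\rangle$ minus a piece that recombines; after this simplification the variation of $\Theta^L_c(k)$ in direction $h$ reduces to $\int\big(\langle h\times D_s c + c\times D_s h,\, L_c k\rangle + \langle c\times D_s c, (D_{c,h}L_c)k\rangle\big)\,ds$. I would then integrate by parts in the term $\langle c\times D_s h, L_c k\rangle = \det(c, D_s h, L_c k)$ only if needed to match the stated form, but in fact the form \eqref{eq:Omega_L} keeps $D_s h$ explicitly, so no integration by parts is required — one simply antisymmetrizes.

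Next I would subtract the analogous expression for $D_{c,k}\Theta^L_c(h)$ and antisymmetrize. The cross-product terms combine: $\langle h\times D_s c + c\times D_s h, L_c k\rangle - \langle k\times D_s c + c\times D_s k, L_c h\rangle$, and using $\langle a\times b, x\rangle = \langle b\times x, a\rangle = \det(a,b,x)$ and the self-adjointness $\int\langle L_c h,k\rangle ds=\int\langle h, L_c k\rangle ds$ (applied to move $L_c$ where convenient), these reorganize exactly into the first two bracketed groups of \eqref{eq:Omega_L}, namely $\langle D_s c,\, L_c h\times k + h\times L_c k\rangle$ and $-\langle c,\, D_s h\times L_c k + L_c h\times D_s k\rangle$. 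The operator-variation terms give $\int\langle c\times D_s c,\,(D_{c,k}L_c)h - (D_{c,h}L_c)k\rangle\,ds$, which is precisely the third group. The invariance statement then follows immediately: $\Theta^L$ is invariant under the right $\Diffp$-action and left $SO(3)$-action by Lemma~\ref{lem:Liouville_one_form}, and the exterior derivative of an invariant form is invariant, so $\Omega^L = -d\Theta^L$ inherits both invariances with no further work.

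The main obstacle I anticipate is the careful handling of the variation of the arclength quantities and, more delicately, the bookkeeping around the term $(D_{c,h}L_c)k$: one must be sure that $D_{c,h}L_c$ is well defined as the derivative of the operator field and that no hidden contribution is lost when $L_c$ is allowed to depend on $c$ through the arclength structure. A secondary subtlety is verifying that every cross-product/determinant identity is applied with the correct sign under the antisymmetrization $h\leftrightarrow k$ — it is easy to drop or flip a sign when converting between $\langle a\times b,x\rangle$, $\det(a,b,x)$, and $\langle b\times x,a\rangle$. I would therefore carry the computation with $\det$-notation throughout and only convert to cross products at the very end to match \eqref{eq:Omega_L}. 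Everything else is routine application of the product rule and the identities already recorded in the excerpt.
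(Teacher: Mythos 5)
Your proposal is correct and follows essentially the same route as the paper: vary the explicit $c$, the arclength derivative, the arclength measure, and the operator field, observe that the measure-variation term cancels against the tangential part of $D_{c,h}(D_sc)=D_sh-\langle D_sh,D_sc\rangle D_sc$, antisymmetrize, and deduce invariance from that of $\Theta^L$. One small remark: the self-adjointness of $L_c$ is never actually needed in the antisymmetrization — the determinant identities $\det(a,b,x)=\langle a,b\times x\rangle=\langle a\times b,x\rangle$ already produce the stated grouping.
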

\begin{remark}[Marsden-Weinstein symplectic structure]\label{rmk:Marsden-Weinstein}
It is known that for the invariant $L^2$-metric, i.e., $L=\operatorname{id}$,
one obtains three times the Marsden-Weinstein (weak)-symplectic structure with this procedure (See \cite{tabachnikov2017bicycle,Padilla2019bubbleRings} for example), i.e.,
\begin{equation}\label{eq:Marsden-Weinstein form for space curves}
3\Omega^{\operatorname{MW}}_c(h,k):=\Om^\id_c(h,k)= 3 \int_{S^1}\langle D_sc\times h,k\rangle ds = 3 \int\det(D_sc,h,k)ds.
\end{equation}

Its kernel consists exactly of all vector fields along $c$ which are tangent to $c$, so by reduction it induces a presymplectic structure on 
shape space $\Imm(S^1,\mathbb R^3)/\Diffp$ which is easily seen to be weakly non-degenerate and thus is a symplectic structure there. 
\end{remark}

\begin{proof}[Proof of \autoref{lem:formula_sympl}]
To prove the formula for $\Om^L$ we first collect several variational formulas, see eg.~\cite{michor_mumford2006} for a proof:
\begin{align*}
  ds &= |c_\th|d\th, \quad 
  D_{c,h} ds = \frac{\langle h_\th,c_\th\rangle}{|c_\th|} d\th 
  = \langle D_s h, D_s c\rangle ds 
  \\
  D_s &=\frac{1}{|c_\th|}\p_\th, \quad 
  D_{c,h}D_s = \frac{-\langle h_\th, c_\th\rangle}{|c_\th|^3}\p_\th
  = -\langle D_s h, D_s c\rangle D_s.
\end{align*}
Since $\Imm(S^1, \mathbb R^3)$ is open in $C^\infty(S^1,\mathbb R^3)$, we can choose globally constant $h,k$ i.e., independent of the location $c$ on $\Imm(S^1, \mathbb R^3)$, namely \rev{$D_{c,h}(k)=D_{c,k}(h)=0$ and} $[h,k]=0$. \rev{Using $D_{c,h}(L_c k)=(D_{c,h}L_c)k+L_c (h(k))=(D_{c,h}L_c)k$, we} compute
\begin{align*}
    D_{c,h}\Th^L_c(k) &= \int\Big(\det(h, D_s c, L_c k) 
     -\langle D_s h, D_s c\rangle \det(c, D_s c, L_c k)
     +\det(c, D_s h, L_c k)
    \\&\qquad\qquad
     +\det(c, D_s c, (D_{c,h}L_c)k)
     +\langle D_s h, D_s c\rangle\det(c, D_s c, L_c k)\Big)ds 
    \\&
    = \int\Big(\det(h, D_s c, L_c k)+\det(c, D_s h, L_c k) +\det(c, D_s c, (D_{c,h}L_c)k)\Big)ds.
\end{align*}
Thus we get for $\Om^L$:
\begin{equation}
\begin{aligned}    
\Om^L_c(h,k)& = -D_{c,h}\Th^L_c(k) + D_{c,k}\Th^L_c(h) + 0
    \\&
    = \int\Big(-\det(h, D_s c, L_c k) + \det(k, D_s c, L_c h) 
     -\det(c, D_s h, L_c k) + \det(c, D_s k, L_c h)
    \\&\qquad\qquad
     -\det\big(c, D_s c, (D_{c,h}L_c)k - (D_{c,k}L_c)h\big)\Big)ds
    \\&
    =\int\Big(\langle D_s c, L_c h \x k + h\x L_c k\rangle 
     - \langle c, D_s h \x L_c k - D_s k \x L_c h \rangle 
    \\&\qquad\qquad
     - \langle c\x D_s c, (D_{c,h}L_c)k - (D_{c,k}L_c)h\rangle\Big) ds,
\end{aligned}
\end{equation}
which yields the desired formula for $\Om^L$. The invariance properties of $\Om^L$ follow directly from the corresponding invariance properties of $\Theta^L$.
\end{proof}


\subsection*{The induced (pre)symplectic structure on $B_i(S^1,\mathbb R^3)$}\label{symplectic_shape}
In the previous part we have calculated a (pre)symplectic form on the space of parametrized curves $\Imm(S^1,\mathbb R^3)$; we are, however, rather interested to construct symplectic structures on the shape space of geometric curves $B_i(S^1,\mathbb R^3)$. The following result contains necessary and sufficient conditions for the forms $\Th^L$ and $\Om^L$ to descend to this quotient space:

\begin{theorem}[The (pre)symplectic structure on unparametrized curves]\label{th:descending_presymplectic_form}
The form $\Om^L$ factors to a (pre)symplectic form $\bar\Om^L$ on $B_i(S^1,\mathbb R^3)$ if the inertia operator $L$ \rev{is equivariant under the $\Diffp$-action and} maps vertical tangent vectors to $\operatorname{span}\{c,c_\theta\}$, i.e., if for all $c\in \Imm(S^1,\mathbb R^3)$ and $a\in C^{\infty}(S^1)$ we have
$L_c(a.c_\theta)=a_1 c_\theta+a_2 c$ for some functions $a_i\in C^{\infty}(S^1)$.
\end{theorem}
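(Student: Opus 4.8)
The plan is to show that, under the stated hypothesis, not merely $\Om^L$ but already the Liouville one-form $\Th^L$ is \emph{basic} for the $\Diffp$-action, i.e. invariant and horizontal; this is a priori stronger than what is asserted and makes the descent transparent. Recall the general principle that a form on $\Imm(S^1,\R^3)$ is the pullback under the projection $\pi\colon\Imm(S^1,\R^3)\to B_i(S^1,\R^3)$ of a (necessarily unique) form on the quotient precisely when it is $\Diffp$-invariant and horizontal, meaning that its contraction with every vertical vector vanishes; over the orbifold singular strata one applies this in local orbifold charts (see Appendix~\ref{app:weak_symplectic} and \cite{cervera1991action,Michor20}). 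Invariance of $\Th^L$ is exactly \autoref{lem:Liouville_one_form}, so the whole content of the theorem is the horizontality statement $\Th^L_c(h)=0$ for every vertical $h$, i.e. every $h=a.c'$ with $a\in C^\infty(S^1)$.

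This horizontality is immediate from the hypothesis together with the defining formula \eqref{eq:Liouvilleform}. For $h=a.c'$ one has $\Th^L_c(a.c')=\int_{S^1}\det\big(c,D_sc,L_c(a.c')\big)\,ds$, and the assumption $L_c(a.c')=a_1c'+a_2c$ makes the integrand equal to $a_1\det(c,D_sc,c')+a_2\det(c,D_sc,c)$, which vanishes pointwise: in the first term $c'=|c'|D_sc$ is a multiple of the second column, and in the second term the vector $c$ already occupies the first column. Hence $\Th^L_c(a.c')=0$ for all $a$, so $\Th^L$ is basic and descends to a one-form $\bar\Th^L$ on $B_i(S^1,\R^3)$ with $\pi^*\bar\Th^L=\Th^L$. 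Setting $\bar\Om^L:=-d\bar\Th^L$ then yields a closed two-form on $B_i(S^1,\R^3)$ with $\pi^*\bar\Om^L=-d\pi^*\bar\Th^L=-d\Th^L=\Om^L$, which is exactly the asserted (pre)symplectic form. Equivalently one can argue that $\Om^L$ itself is basic: it is invariant by \autoref{lem:formula_sympl}, and for a vertical (fundamental) vector field $h$ Cartan's formula gives $i_h\Om^L=-i_hd\Th^L=-\L_h\Th^L+d(i_h\Th^L)=0$, using $\L_h\Th^L=0$ from invariance and $i_h\Th^L=0$ from the computation just made.

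The computational core is thus nothing more than the vanishing of a $3\times 3$ determinant with a repeated and a proportional column; the step that genuinely needs care is the descent across the orbifold singularities of $B_i(S^1,\R^3)$, i.e. curves with nontrivial finite isotropy under $\Diffp$. On the open dense manifold part of $B_i$ this is the standard fact that a basic form on the total space of a quotient by a group action admitting local slices pushes down, and at a singular orbit one passes to an orbifold chart where the curve has a finite cyclic isotropy group and invokes the same invariance-plus-horizontality argument for that finite group. Closedness of $\bar\Om^L$ (and hence that it is presymplectic) is then automatic: $\pi$ is a surjective submersion, $\pi^*$ is injective on forms, and $\pi^*d\bar\Om^L=d\Om^L=0$. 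Finally, a reader preferring to avoid $\Th^L$ altogether may substitute $L_c(a.c')=a_1c'+a_2c$ directly into \eqref{eq:Omega_L}, use $c'\parallel D_sc$ to cancel most terms, and integrate by parts once to obtain $\Om^L_c(a.c',k)=0$ for all $a\in C^\infty(S^1)$ and all $k$, recovering horizontality of $\Om^L$ directly.
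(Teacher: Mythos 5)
Your proposal is correct and follows essentially the same route as the paper: establish that $\Th^L$ is $\Diffp$-invariant (already known from \autoref{lem:Liouville_one_form}) and horizontal, the latter because $c\x D_sc$ is orthogonal to both $c$ and $c'$ (equivalently, your determinant with a repeated and a proportional column vanishes), then push $\Th^L$ down to $\bar\Th^L$ and set $\bar\Om^L=-d\bar\Th^L$ so that $\pi^*\bar\Om^L=\Om^L$. The extra remarks on orbifold charts and the Cartan-formula argument for $\Om^L$ itself are fine but not needed beyond what the paper does.
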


\begin{proof}
The Liouville form $\Th^L$ on $\Imm(S^1,\mathbb R^3)$ factors to a smooth 1-form $\bar\Th^L$ on shape space $B_i(S^1,\mathbb R^3)$ with $\Th^L = \pi^* \bar \Th^L$
if and only if $\Th^L$ is invariant under under the reparameterization group $\Diffp$ and is \emph{horizontal} in the sense that it vanishes on each vertical tangent vector $h=a.c_\theta$ for a in $C^\infty(S^1,\mathbb R)$. 

Since $\Th^L$ is invariant under the reparameterization group $\Diffp$ by construction it only  remains to determine a condition on $L$ such that $\Th^L$ vanishes on all vertical $h$, i.e., we want \begin{equation}
\Theta^L_c(ac_\theta)=\int \langle c\times D_sc, L_c(ac_\theta)\rangle ds=0.
\end{equation}
From here it is clear that this holds if $L_c(a.c_\theta)=a_1 c_\theta+a_2 c$ for some functions $a_i\in C^{\infty}(S^1)$.

In that case also its exterior derivative satisfies 
\begin{equation*}
 \Om^L=- d \Th^L = - d \pi^* \bar\Th^L = - \pi^* d \bar\Th^L =:  \pi^* \bar \Om^L   
\end{equation*}
for the presymplectic form $\bar\Om^L = - d\bar\Th^L$ on $B_i(S^1,\mathbb R^3)$.
\end{proof}
\begin{example}[Inertia operators with a prescribed horizontal bundle]
 There are several different examples of operators that satisfy these conditions, including in particular 
 the class of almost local metrics:
\begin{align*}
    L_c(h) &= F(c).h \text{ for }F\in C^\infty(\Imm(S^1,\mathbb R^3),\mathbb R_{>0}), \text{ for example}
\\
    L_c (h) &= \Ph(\rev{\ell(c)})h, \quad L_c (h) = \Ph(\int_{S^1} \frac{\ka^2}{2} ds)h, \quad L_c(h) = (1+ A\ka^2)h,
\end{align*}
\rev{where $\kappa_c=|D_s^2c|$ denotes the curvature and $\Phi\colon \R_{\geq 0}\to \R_{>0}$ is a suitable smooth function.}
Note, that the class of Sobolev metrics, as introduced in Remark~\ref{rem:Sobmetric} does not satisfy the conditions of the above theorem. Thus these metrics do not induce a (pre)symplectic form on the quotient space. 
By including a projection operator in their definition one can, however, modify these higher-order metrics to still respect the vertical bundle:
\begin{align*}
    L_c h &= \Big(\on{pr}_c(1- (-1)^k D_s^{2k})\on{pr}_c + (1-\on{pr}_c)(1- (-1)^k D_s^{2k})(1-\on{pr}_c)\Big ) h,
\end{align*}
where 
$\on{pr}_c h= \langle D_sc, h\rangle D_sc$  is the $L^2$-orthogonal projection to the vertical bundle. For more details see~\cite{bauer2015metrics}, where metrics of this form were studied in detail.
\end{example}

\begin{remark}[Horizontal $\Om^L$-Hamiltonian vector fields and  $\bar\Om^{L}$-Hamiltonian vector fields]\label{horizontalHamiltonian} 
In the following we assume that the inertia operator $L \in \Gamma(\on{End}(T\Imm(S^1,\mathbb R^3))$ induces a (weak) symplectic structure on $B_i(S^1,\mathbb R^3)$, i.e., it satisfies the conditions of Theorem \ref{th:descending_presymplectic_form}
and is moreover weakly non-degenerate in the sense that $\bar\Om^L: \rev{TB_i(S^1,\R^3) \to T^*B_i(S^1,\R^3) }$ is injective. Since $T_c^*\pi\o \bar\Om^L_{\pi(c)}\o T_c\pi = \Om^L_c$,  this is equivalent to the kernel of $\Om^L_c:T_c\Imm \to T_c^*\Imm$ being equal to the tangent space to the $\Diffp$-orbit $c\o \Diffp$  for all $c$. Thus $\Om^L_c$ restricted to the $G^L$-orthogonal complement of $T_c(c\o\Diffp)$ is injective. See \cite[Section 48]{kriegl_michor_1997convenient} for more details.

Assume that $H$ is a $\Diffp$-invariant smooth function on $\Imm(S^1,\mathbb R^3)$. Then $H$  induces a Hamiltonian function $\bar H$ on the quotient space $B_i(S^1,\mathbb R^3)$ with $\bar H\o\pi = H$.
Since the 2-form $\Om^L$ on $\Imm(S^1,\mathbb R^3)$ is only presymplectic it does not directly define a Hamiltonian vector field. However, if each $dH_c$ lies in the image of $\Om^L:T\Imm(S^1,\mathbb R^3) \to T^*\Imm(S^1,\mathbb R^3)$, then  a unique smooth \emph{horizontal Hamiltonian vector field} $X\in \X(\Imm)$  is determined by 
\begin{equation}\label{eq:horizontal_vector_field}
    dH = i_X\Om^L = \Om^L(X,\;) \text{ and } G^L_c(X_c,Tc.Y)=0,\quad \forall Y\in \X(S^1)
\end{equation}
which we will denote by $\on{hgrad}^{\Om^L}(H)$. Obviously we then have
\begin{equation}
    \on{grad}^{\bar\Om^L}(\bar H)\o \pi = T\pi\o \on{hgrad}^{\Om^L}(H).
\end{equation}
\rev{Here and in the rest of the article, we write $\on{grad}^A E$ for the vector field satisfying $A(\on{grad}^A E,\cdot)=dE$ for a given non-degenerate bilinear form $A$ such as a Riemannian metric and a symplectic form, and a function $E$.}

Sometimes the kernel of $\Om^L$ will be larger than the tangent spaces to the $\Diff$-orbits; then $\on{hgrad}^{\Om^L}(H)$ will be chosen $G^L$-perpendicular to the kernel of $\Om^L$. This will happen in Theorem \ref{thm:sympl_conf}, for example, where $L$ is a function of $c$ such that $\Th_c^L$ is also invariant under scaling. 
The Hamiltonian $H$  factors to the corresponding space $\Imm(S^1,\mathbb R^3)/\ker\Om^L$  (which  denotes the quotient by the foliation generated by $\ker\Om^L$) if $H$ is additionally invariant under each vector in $\ker\Om^L$.
\end{remark}

\begin{remark}\label{rmk:horizontal_MW_field}
    For the Marsden-Weinstein structure $\OmMW=-d\Th^{\frac{1}{3}\id}$, we have 
    \begin{align*}
        \on{hgrad}^{\OmMW} H = - D_s c \x  \on{grad}^{G^\id} H
    \end{align*}
    since 
    \begin{align*}
        G^\id(D_s c \x \cdot, \cdot)=\OmMW(\cdot,\cdot).
    \end{align*}
\end{remark}

\begin{remark}[Momentum mappings]\label{rem:momentum_general}
If a Lie group $\mathcal G$ acts on $\Imm(S^1,\R^3)$ and  preserves $\Th^L$,
 the corresponding momentum mapping $J$ can be expressed in terms of $\Th^L$ and the fundamental vector field mapping $\ze:\mathfrak g \to \X(\Imm(S^1,\mathbb R^3))$. For $Y\in \mathfrak g$, we have
\begin{equation*}
    \langle J(c),Y\rangle = \Th^L(\ze_Y)_c = \int \langle c\x D_sc, L_c\zeta_Y\rangle ds,
\end{equation*}
where $\langle\cdot,\cdot\rangle: \mathfrak g^* \x \mathfrak g\to \mathbb R$ is \rev{the duality product and $\zeta_Y$ is the fundamental vector field generated by $Y$}.
Namely,
\begin{align*}
    d \Th^L(\ze_Y) &= d i_{\ze_Y} \Th^L = \mathcal{L}_{\ze_Y} \Th^L -  i_{\ze_Y} d\Th^L = 0 - i_{\ze_Y} \Om^L.
\end{align*}

Lemma \ref{lem:Liouville_one_form} asserts that $\Th^L$ is invariant under the right action of $\Diffp$ and the left action of $SO(3)$.

Thus for $X=a.\p_\th\in \X(S^1)= C^\infty(S^1)\p_\th$  the \emph{reparameterization momentum} is given as follows:
\begin{align}
\ze_{a.\p_\th}(c) &= D_{c,a.c_\th} \qquad\text{as derivation at $c$ on }
C^\infty(\Imm, \mathbb R) 
\\&
= a. c_\th = a.|c_\th| D_s c \in T_c\Imm = C^\infty(S^1,\mathbb R^3)
\\
L_{c\o \ph}(h\o \ph) &= (L_ch)\o \ph \implies 
(D_{c,a.c_\th} L_c)(h) + L_c(a.h_\th) = a.(L_ch)_\th
\\
  \langle J^{\Diffp}(c), a.\p_\th\rangle &= \Th^L_c(\ze_{a.\p_\th}(c)) = \Th^L_c (a.c_\th) = 
  \int \langle c\x D_sc, L_c(a.c_\th)\rangle ds
  \\&
  = \int \langle c\x D_sc, a.(L_cc)_\th - (D_{c,a.c_\th} L_c)(c)\rangle ds.
 \end{align}
 For $Y\in \mathfrak {so}(3)$ the \emph{angular momentum} is  
 \begin{align}
 \langle J^{SO(3)}(c), Y\rangle &= \Th^L(Y\o c) = \int \langle c\x D_sc, L_c(Y\o c)\rangle ds 
 \\&
 = \int \langle c\x D_sc, \rev{Y}\o L_c(c) 
 - D_{c,Y\o c}L_c(c)
 \rangle ds
\end{align}
\rev{where $Y \circ c=\zeta_Y(c)$ denotes the multiplication of $Y$ as a matrix with $c$ as a vector.
}
For a correct interpretation of the angular momentum recall (from  \cite[4.31]{Michor08}, e.g.) that the action of  $Y\in \mathbb R^3 \cong \mathfrak{so}(3)\cong L_{\text{skew}}(\mathbb R^3,\mathbb R^3)$ on $\mathbb R^3$ is given by $X\mapsto 2Y\x X$.

If $L$ is also invariant under translations, then the \emph{linear momentum}, for $y\in \mathbb R^3$, is 
\begin{align}
\langle J^{\mathbb R^3}(c), y\rangle &= \Th^L_c(y) = \int \langle c\x D_sc, L_c(y)\rangle ds\,.    
\end{align}
Note that the above also furnishes conserved quantities on $B_i$, if $\bar\Om^L$ is non-degenerate.
\end{remark}


\section{Symplectic structures induced by conformal factors}\label{sec:conformal}
In this section we consider symplectic structures induced by Riemannian metrics, that are conformally equivalent to the $L^2$-metric, i.e.,  we consider the $G^L$ metric for $L_c=\la(c)$ where $\la\colon \Imm(S^1,\R^3)\to \R_{>0}$ is invariant under reparametrization. Thus $\la$  factors to a function $\bar \la\colon B_i(S^1,\R^3)\to \R_{>0}$ by $\pi^* \bar \la= \rev{\bar\la \o \pi =}\la $. Moreover, if $\on{grad}^{G^\id}_c\la$ exists (which we assume) it is pointwise perpendicular to $D_s c$.

We first study the scale invariance of the corresponding Liouville 1-form, which will be of importance for the calculation of the induced (pre)symplectic structure. We say $\Th^L$ is scale-invariant at $c\in \Imm(S^1,\R^3)$ if $\L_I \Th^L_c=0$ where $I\in \Gamma(T\Imm(S^1,\R^3))$ is the scaling vector field $I_c \coloneqq c$  with flow $\on{Fl}^I_t(c)= e^t.c$. Depending on the context, we use both $I$ and $c$ for scaling as a tangent vector in this article.
\begin{lemma}[Scale invariance of $\Th^\la$]
\label{prop:scale_invariance_of_conformal_Liouville}
    Let $L_c=\la(c)\operatorname{id}$. Then the following are equivalent:
    \begin{enumerate}[label=(\alph*)]
        \item \label{item:1} $\Th^\la$ is invariant under scalings.
        \item \label{item:2}$3\la(c) + \L_I\la(c)=3\la(c) + D_{c,c}\la\rev{(c)}=0$ for all $c\in \Imm(S^1,\R^3)$.
        \item \label{item:3} $\la(c) = \La(c/\ell(c)).\ell(c)^{-3}$ for a smooth function $\La: \{c\in \Imm:\ell(c)=1\}\to \mathbb R_{>0}$, 
    \end{enumerate}
    \rev{where $\ell(c)$ is the length of $c$.}
    \end{lemma}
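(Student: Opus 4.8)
The plan is to reduce all three conditions to the behaviour of $\Th^\la$ under the scaling flow $\on{Fl}^I_t(c)=e^t.c$ and then to read them off from a single homogeneity identity. First I would write $\Th^\la_c=\la(c)\,\eta_c$, where $\eta_c(h):=\int_{S^1}\langle c\x D_sc,h\rangle\,ds$ is, up to a constant and the pullback by $\pi$, the Marsden--Weinstein Liouville form, and record the preliminary fact that $\eta_c\neq 0$ for every immersion $c$: were $\eta_c$ identically zero, then $c\x D_sc\equiv 0$, so $c$ would be everywhere parallel to $c'$ and its image would lie on a line through the origin, which is impossible for an immersion of $S^1$.

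Next I would compute the pullback along the scaling flow, which is the linear map $c\mapsto e^t.c$ with tangent map $T_c\on{Fl}^I_t=e^t.\id$. Using the scaling identities $D_s^{e^t.c}(e^t.c)=D_sc$ and $ds_{e^t.c}=e^t\,ds$, a one-line substitution gives
\[
\big((\on{Fl}^I_t)^*\Th^\la\big)_c(h)=\Th^\la_{e^t.c}(e^t.h)=e^{3t}\la(e^t.c)\,\eta_c(h),
\]
the exponent $3$ coming from one factor $e^t$ each out of $c\x D_sc$, the test vector $h$, and $ds$. Since $\eta_c\neq 0$, this shows that $\Th^\la$ is scale invariant if and only if $e^{3t}\la(e^t.c)=\la(c)$ for all $t\in\R$ and all $c$; differentiating at $t=0$ and using $D_{c,c}\la=\L_I\la$ gives $3\la(c)+\L_I\la(c)=0$, and conversely integrating this linear ODE for $t\mapsto\la(e^t.c)$ recovers the homogeneity identity. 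This yields \ref{item:1}$\iff$\ref{item:2}.

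For \ref{item:2}$\iff$\ref{item:3} I would use the scale-equivariance $\ell(e^t.c)=e^t\ell(c)$ of the length. Setting $e^t=\ell(c)^{-1}$ in the homogeneity identity, which by the previous step is equivalent to \ref{item:2}, gives $\la(c)=\ell(c)^{-3}\la(c/\ell(c))$; since $c/\ell(c)$ has unit length this is exactly \ref{item:3} with $\La:=\la|_{\{\ell=1\}}$, which is smooth and positive because $\la$ and $c\mapsto c/\ell(c)$ are. Conversely, if $\la(c)=\La(c/\ell(c))\,\ell(c)^{-3}$, then the scale-invariance of $c\mapsto c/\ell(c)$ forces $\la(e^t.c)=\La(c/\ell(c))(e^t\ell(c))^{-3}=e^{-3t}\la(c)$, hence \ref{item:2}. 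I do not expect a genuine obstacle here: once the scaling exponents are tracked, everything is a single substitution, a one-dimensional linear ODE, and the change of variables $c\mapsto c/\ell(c)$; the only points deserving a written word are the non-vanishing of $\eta_c$ and the smoothness of $\La$.
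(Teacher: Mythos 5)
Your proof is correct, and for the equivalence \ref{item:1}$\iff$\ref{item:2} it takes a slightly different computational route than the paper: you compute the pullback $(\on{Fl}^I_t)^*\Th^\la = e^{3t}\la(e^t c)\,\eta_c$ directly by tracking the scaling weights of $c\x D_sc$, the test vector, and $ds$, and then differentiate/integrate in $t$, whereas the paper works infinitesimally via Cartan's formula, $\L_I\Th^\la = d i_I(\la\Th^\id) + i_I d(\la\Th^\id) = (i_Id\la + 3\la)\Th^\id$, using $i_I\Th^\id=0$ and $i_I\Om^\id=-3\Th^\id$. The two computations produce the same identity; yours has the merit of exhibiting the global homogeneity $e^{3t}\la(e^tc)=\la(c)$ at once (which you then reuse for \ref{item:2}$\iff$\ref{item:3}, exactly as the paper does by integrating the ODE $\p_t\la(tc)=-\tfrac3t\la(tc)$ and substituting $t=\ell(c)^{-1}$). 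You also make explicit a point the paper leaves implicit but genuinely needs for \ref{item:1}$\Rightarrow$\ref{item:2}: that $\Th^\id_c=\eta_c$ is a nonzero functional for every immersed $c$. Your justification of that fact is essentially right but slightly glossed at the zeros of $c$; a cleaner way to finish is to take a maximum $\th_0$ of $|c|^2$ (where $c(\th_0)\ne 0$ and $\langle c,c'\rangle(\th_0)=0$) and note that $c\x c'\equiv 0$ would then force $c'(\th_0)=0$, contradicting immersivity. None of this affects the validity of the argument.
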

\begin{proof} We have the following equivalences.
\\
$\ref{item:1}\iff \ref{item:2}$:
\begin{align}
       \L_I \Th^\la 
        &=\L_I (\la \Th^\id ) 
        = d i_I (\la \Th^\id ) +i_I d (\la \Th^\id ) 
        = 0 + i_I (d\la \wedge \Th^\id + \la d \Th^\id) \\
        &= i_I d\la \wedge \Th^\id +0 +  \la i_I d\Th^\id
        = (i_I d\la) \Th^\id -\la i_I \Om^\id = (i_I d\la +3\la)\Th^\id.
    \end{align}    
$ \ref{item:2}\iff \ref{item:3}$: Let $\ell(c)=1$.   
\begin{align}
    \p_t\la(tc) &=d\la_{tc}= D_{c,tc}\la = \tfrac1t D_{tc,tc}\la = \tfrac{-3}{t}\la(tc)
    \\\iff & \p_t\log(\la(tc)) = \tfrac{-3}{t} \iff \log(\la(tc)= \log(\La(c)t^{-3}) \iff \la(tc) = \La(c).t^{-3}. \qedhere
\end{align}
\end{proof}
Equipped with the above Lemma we are now ready to calculate the induced symplectic structure $\Omega^\la$, where we will distinguish between the scale-invariant and non-invariant case. 

\begin{theorem}[The (pre)symplectic structure $\Omega^\la$]\label{thm:sympl_conf}
 Let $L_c=\la(c)\operatorname{id}$ be $\Diff$-invariant. Then the induced (pre)symplectic structure on $\Imm(S^1,\mathbb R^3)$ is given by
    \begin{align}\label{eq:Om_lambda}
        \Omega^\la=\la \Omega^\id + \Th^\id\wedge d\la.
    \end{align}
Furthermore we have
\begin{enumerate}[label=(\alph*)]
\item \label{thm:sympl_conf:casea}
If $3\rev{\la(c)} + (\L_I\la)\rev{(c)}=3\la(c)+D_{c,c}\la(c)\neq 0$ on any open subset of $\Imm(S^1,\R^3)$, then  $\Omega^\la$ induces a non-degenerate 2-form on $B_i(S^1,\R^3)$, which is thus symplectic.
\item \label{thm:sympl_conf:caseb} Assume in addition,  that $X:=\on{hgrad}^{\Om^\id}\la$ exists, is smooth, \rev{admits a  flow}, and that $3\la\rev{(c)} + (\L_I\la)\rev{(c)} =0$ for all $c$. 
Denote by $\mathcal F$ the involutive 2-dimensional vector sub-bundle spanned by the vector fields $I$ and $\on{hgrad}^{\Om^\id}\la$.  
Then $\Om^\la$  induces a non-degenerate 2-form on 
$\Imm(S^1,\mathbb R^3)/(\Diffp \x \mathcal F
)$. \rev{If $\L_X\ell = 0$,} 
it is also non-degenerate on $\{\bar c\in B_i(S^1,\R^3): \bar\ell_{\bar c}=1\}/\on{span}(\on{grad}^{\bar\Om^\id}\bar\la)$ \rev{where $\ell =\bar\ell\o \pi$ denotes the length function $\bar\ell$ on $B_i(S^1,\R^3)$.} 
\end{enumerate}
\end{theorem}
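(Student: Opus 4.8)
The plan is: first to establish the formula \eqref{eq:Om_lambda}; then to convert the condition $h\in\ker\Om^\la_c$ into a pointwise equation on $S^1$ using weak non-degeneracy of the $L^2$-metric $G^\id$; then to show this equation pins $h$ down to a vertical vector plus an at most two-dimensional correction controlled by a $2\times2$ matrix, and to compute that matrix; and finally to read off parts (a) and (b). For the formula: since $L_c=\la(c)\id$, definition \eqref{eq:Liouvilleform} gives $\Th^\la=\la\,\Th^\id$, hence $\Om^\la=-d(\la\,\Th^\id)=-d\la\wedge\Th^\id+\la\,\Om^\id=\la\,\Om^\id+\Th^\id\wedge d\la$. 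That $\Om^\la$ (and $\Th^\la$) descend to $B_i(S^1,\R^3)$ is then immediate from \autoref{th:descending_presymplectic_form}, as $L_c(a\,c')=\la(c)\,a\,c'\in\on{span}\{c,c'\}$.

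Next I would use $\Om^\id_c(h,k)=3\int\langle D_sc\times h,k\rangle\,ds$, $\Th^\id_c(k)=\int\langle c\times D_sc,k\rangle\,ds$ and $d\la_c(k)=\int\langle\on{grad}^{G^\id}_c\la,k\rangle\,ds$ to rewrite
\[ \Om^\la_c(h,k)=G^\id_c(V_h,k),\qquad V_h:=3\la(c)\,D_sc\times h+\Th^\id_c(h)\,\on{grad}^{G^\id}_c\la-d\la_c(h)\,(c\times D_sc), \]
so that by weak non-degeneracy of $G^\id$, $h\in\ker\Om^\la_c$ iff $V_h\equiv0$ on $S^1$. Decomposing $h=h_T+h_N$ into $D_sc$-tangential and normal parts and using $\on{grad}^{G^\id}_c\la\perp D_sc$, one checks $V_h=V_{h_N}$ and that $V_{h_N}$ is a field normal to $c$; in particular every vertical $h=a\,c'$ lies in the kernel, and it remains to solve $V_{h_N}=0$. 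Here I introduce on the normal bundle the fibrewise complex structure $j(v):=D_sc\times v$ (so $j^2=-\mathrm{id}$ on normal fields), note $c\times D_sc=-j(c_N)$, rewrite $V_{h_N}=0$ as $3\la\,j(h_N)+\Th^\id_c(h_N)\,\on{grad}^{G^\id}_c\la+d\la_c(h_N)\,j(c_N)=0$, and apply $j$ to get
\[ h_N=\tfrac1{3\la(c)}\bigl(\Th^\id_c(h_N)\,j(\on{grad}^{G^\id}_c\la)-d\la_c(h_N)\,c_N\bigr). \]
Thus $h_N\in\on{span}\{u,w\}$ with $u:=j(\on{grad}^{G^\id}_c\la)$ and $w:=c_N$, and writing $h_N=\al u+\be w$ the vector $(\al,\be)$ must be a fixed point of the matrix $M=\tfrac1{3\la(c)}\left(\begin{smallmatrix}\Th^\id_c(u)&\Th^\id_c(w)\\-d\la_c(u)&-d\la_c(w)\end{smallmatrix}\right)$. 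A Lagrange identity for cross products together with $d\la_c(c)=D_{c,c}\la=\L_I\la$ gives $\Th^\id_c(u)=-d\la_c(w)=-\L_I\la$ and $\Th^\id_c(w)=d\la_c(u)=0$, so $M=\tfrac{-\L_I\la(c)}{3\la(c)}\,\mathrm{Id}_2$, whose only eigenvalue equals $1$ precisely when $3\la(c)+\L_I\la(c)=0$.

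Part (a) then follows: where $3\la(c)+\L_I\la(c)\neq0$, the matrix $M$ has no fixed point but $0$, so $\ker\Om^\la_c$ consists only of vertical vectors and $\bar\Om^\la$ is weakly non-degenerate at $\pi(c)$, hence symplectic there. For part (b), when $3\la+\L_I\la\equiv0$ we get $M\equiv\mathrm{Id}_2$, so $\ker\Om^\la_c=(\text{vertical})\oplus\on{span}\{u,w\}$; since $w=c_N\equiv I_c$ modulo vertical and (via the normalization $\Om^\id=3\,\OmMW$ and \autoref{rmk:horizontal_MW_field}) $u=-3\,\on{hgrad}^{\Om^\id}\la$, this kernel equals $(\text{vertical})\oplus\mathcal F$. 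Since $\Om^\la$ is closed, the distribution $\ker\Om^\la$ is involutive (for a closed form, $i_X\om=i_Y\om=0$ forces $i_{[X,Y]}\om=0$), so the quotient is legitimate and $\bar\Om^\la$ drops to a non-degenerate two-form there. Finally, $\L_I\la=-3\la\ne0$ shows $\la$ (hence $\bar\la$) has no critical points, so $\{\bar\la=1\}$ is a smooth global slice for the scaling flow (Lemma~\ref{prop:scale_invariance_of_conformal_Liouville}(c) gives $\la(tc)=t^{-3}\la(c)$), $\on{grad}^{\bar\Om^\id}\bar\la$ is tangent to it and spans the remaining kernel direction, and on vectors tangent to $\{\la=1\}$ one has $\Th^\id\wedge d\la=0$, so there $\Om^\la=\Om^\id=3\,\OmMW$, identifying the reduced form with three times the Marsden--Weinstein reduction. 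As $\{\bar\ell=1\}$ is likewise a smooth global slice for scaling ($\ell(tc)=t\ell(c)$, $d\ell_c(c)=\ell\ne0$), the same reduced space is obtained, yielding the last assertion.

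The \emph{main obstacle} is the computation in the second paragraph: choosing the normal-bundle complex structure, reducing $V_{h_N}=0$ to the eigenvalue problem for $M$, and — crucially — getting the entries of $M$ right, including the factor $3$ from the identification $\Om^\id=3\,\OmMW$ and all the signs, since these are exactly what make the eigenvalue $\tfrac{-\L_I\la}{3\la}$ and thereby reconnect non-degeneracy to the scale-invariance criterion of Lemma~\ref{prop:scale_invariance_of_conformal_Liouville}. In part (b) the secondary care goes into the bookkeeping of the iterated quotient — smoothness of the level sets, tangency of $\on{grad}^{\bar\Om^\id}\bar\la$ to $\{\bar\la=1\}$, and the equivalence of the $\{\bar\la=1\}$ and $\{\bar\ell=1\}$ models — whereas involutivity itself comes for free from closedness of $\Om^\la$.
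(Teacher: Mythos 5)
Your proposal is correct, and for the non-degeneracy analysis it takes a genuinely different (and in places cleaner) route than the paper. The formula \eqref{eq:Om_lambda} and the descent to $B_i$ are obtained exactly as in the paper. For the kernel, however, the paper argues by testing $\Om^\la_c(h,\cdot)$ against specific vectors $k=a.c$ (constant $a$, then arbitrary $a$) and then splitting $h$ over the regions $\{c\x D_sc\ne 0\}$ and its complement, while in case (b) it only verifies that $I$ and $X=\on{hgrad}^{\Om^\id}\la$ lie in the kernel and recovers exactness of the kernel indirectly through the slice diffeomorphisms $\{\bar\ell=1\}\cong\{\bar\la=1\}$. You instead dualize against the full $L^2$-metric to get the pointwise equation $V_h\equiv 0$, apply the fibrewise rotation $j=D_sc\x\cdot$ on the normal bundle, and reduce to a two-dimensional fixed-point problem whose unique eigenvalue is $-\L_I\la/(3\la)$; this treats (a) and (b) uniformly and exhibits the exact kernel $(\text{vertical})\oplus\on{span}\{j(\on{grad}^{G^\id}\la),\,c_N\}=(\text{vertical})\oplus\mathcal F$ in case (b) directly, which is a genuine gain in transparency. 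Your entries of $M$ check out ($\Th^\id_c(u)=-d\la_c(w)=-\L_I\la$, $\Th^\id_c(w)=d\la_c(u)=0$), as does the identification $u=-3\on{hgrad}^{\Om^\id}\la$ via $\Om^\id=3\OmMW$. Three small caveats: the phrase ``fixed point of the matrix $M$'' tacitly assumes $u$ and $w$ are linearly independent; this is avoidable (and worth avoiding in case (a)) by instead applying the functionals $\Th^\id_c$ and $d\la_c$ to the equation $h_N=\tfrac1{3\la}(\Th^\id_c(h_N)u-d\la_c(h_N)w)$, which yields $(3\la+\L_I\la)\Th^\id_c(h_N)=0$ and $(3\la+\L_I\la)d\la_c(h_N)=0$ and hence $h_N=0$ with no independence hypothesis. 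Second, in (b) the existence of the leaf space is not automatic from involutivity in this infinite-dimensional setting; the paper derives $[I,X]=-6X$ and still only \emph{assumes} the flow of $X$ exists, so you should flag the same assumption rather than treat the quotient as automatic. Third, the final transfer to $\{\bar\ell=1\}$ deserves one more line (the scaling field $I$ is transversal to both level sets and lies in $\ker\Om^\la$, so the two slices give isomorphic reduced spaces), which is essentially the paper's diffeomorphism $F$.
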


\begin{remark}[\rev{Smooth structure of the orbit space}] 
In case \ref{thm:sympl_conf:caseb}, 
the vector field $X:=\on{hgrad}^{\Om^\id}\la$ exists in $\X(\Imm(S^1,\R^3))$ if and only if  $\on{grad}^{G^{\on{id}}}\la$ exists and is smooth as we have $\on{hgrad}^{\Om^{\on{id}}}\la = \on{hgrad}^{3\OmMW}\la= -\frac{1}{3}D_s c \x \on{grad}^{G^\id}\la$. This is equivalent to the fact that $\bar\la\in C^\infty(B_i(S^1,\mathbb R^3),\mathbb R)$ by \ref{app:om-smooth}.
Moreover, the vector fields $I$ and 
$\on{hgrad}^{\Om^\id}\la(=\frac{1}{3}\on{hgrad}^{\OmMW}\la)$ are linearly independent at any $c$
because $\Om^{\id}_c(\on{hgrad}^{\Om^\id}_c\la, I_c) = i_I d\la(c) = -3\la(c) \ne 0$ by  assumption.   So the dimension of $\mathcal F$ is always 2.
We project to the leaf space of the 2-dimensional distribution if it is integrable. This is the case, if the flow of \rev{$X=\on{hgrad}^{\Om^\id}_c\la$ and thus also of} $\on{grad}^{\bar\Om^\id}\bar\la$ exists; then the flows of $I$ and $\on{grad}^{\bar\Om^\id}\bar\la$ combine to a 2-dimensional $(ax+b)$-group acting on $\Imm(S^1,\R^3)$. We assume that this is the case; to prove existence of the flow one has first to specify $\la$ and then solve a non-linear PDE. 

\rev{Furthermore, we note that the smooth structure of the corresponding quotient space $\Imm(S^1,\mathbb R^3)/(\Diffp \x \mathcal F$ is slightly subtle: it is always a Fr\"olicher space with  tangent bundle, see \cite[Section 23]{kriegl_michor_1997convenient}. If local smooth sections of the projection to the leaf space exist and if the $(ax+b)$-orbits admit slices, then we get a principal bundle with structure group the $(ax+b)$-group in the category of orbifolds, so the leaf space is also an orbifold.}
\end{remark}

\begin{proof}
    The formula directly follows from the product rule applied to $d(\Th^\la)=d(\la \Th^\id)$.

    Case \ref{thm:sympl_conf:casea}: We now show the non-degeneracy; if a tangent vector $h$ satisfies  $h\perp D_s c$ pointwise and $\Om^\la_c(h,k)= 0$ for any $k$, then $h=0$.
    First, choosing $k=a.c$ with some non-zero constant $a\in \R^\x$ we get from $a.c\in \ker \Th^\id$ that, 
    \begin{align}
        0=\Om^\la_c(h,ac)=\la\Om^\id(h,ac)+\Th^\id(h)i_{a.c}d\la-0=a [3\la + D_{c,c}\la]\Th^\id(h).
    \end{align}
    With our assumption $3\la + D_{c,c}\la\neq 0$ we see $h\in\ker \Th^\id$. 

    Next, we test for $h\in\ker \Th^\id$ and $k=a.c$ with some function $a\in C^\infty(S^1)$ to see 
    \begin{align}
        \Om^\la_c(h,a.c)
        =\la\Om^\id(h,a.c)
        =3\la\int a\langle c \x D_s c, h\rangle ds.
    \end{align}
    If this vanishes for any function $a$, we have $\langle c \x D_s c, h\rangle=0$ everywhere. 
    We now consider the regions:
\begin{enumerate}
    \item[(i)] The open subset $U = \{\th\in S^1: c(\th)\x D_sc(\th)\ne 0 \}$,
    \item[(ii)] The closed set  $S^1\setminus U=\{\th\in S^1: c(\th)\x D_sc(\th) = 0\}$.
\end{enumerate}
    Any $h$ satisfying both $h\perp D_s c$ and $h \perp  (c \x D_s c)$ pointwise is of the form $h=b.c+v$ with a function $b\in C^\infty(S^1)$ supported on $U$ and a vector field $v\in C^\infty(S^1,\R^3)$ supported on $S^1\setminus U$ and $v\perp D_s c$ (and hence $v\perp c$ as well).
    Then we have
    \begin{align}
        \Om^\la_c(h,k)
        &=\la \Om^\id(h,k)+\Th^\id(h)i_k d\la-\Th^\id(k)i_h d\la\\
        &=\la \Om^\id(b.c,k)+0-\Th^\id(k)i_{b.c}d\la\\
        &\quad + \la \Om^\id(v,k)+0-\Th^\id(k)i_v d\la\\
        &=\int_{S^1} \langle (3\la.b + D_{c,b.c}\la + D_{c,v}\la) D_s c \x c+ 3 \la. D_s c \x v, k\rangle ds.
    \end{align}
    We assumed that $\Om^\la_c(h,k)=0$ for all $k$, in particular, for ones supported on $S^1\setminus U$. Hence we have $v\equiv 0$. With this we have
    \begin{align}
        \Om^\la_c(h,k)=\int_{U} (3\la.b + D_{c,b.c}\la)  \langle D_s c \x c, k\rangle ds.
    \end{align}
     In order that $ \Om^\la_c(h,k)=0$ for any $k$, we must have $3\la.b + D_{c,b.c}\la\equiv 0$ on $U$. Since $D_{c,b.c}\la \in \mathbb R$ is constant, $b$ is constant. Hence we have  $b(3\la + D_{c,c}\la)\equiv 0$ and get $b\equiv 0$ from our assumption $3\la + D_{c,c}\la\neq 0$. Thus we obtained $h=0$.

    Case \ref{thm:sympl_conf:caseb}: 
By assumption $X:=\on{hgrad}^{\Om^\id}\la$ exist; i.e., $d\la$ is in the image of $\Om^\id: T\Imm \to T^*\Imm$ and satisfies 
$d\la = i_{X}\Om^\id$ and $\langle X, D_sc\rangle = 0$. 
Then we see $X\in \ker \Om_c^\id$ by direct computation using the assumed condition $3\la_c+D_{c,c}\la=0$;
\begin{align}
(i_{X}\Th^\id)_c &= \int \langle c\x D_sc, X\rangle ds = \tfrac13 \Om^\id_c(X_c,c) = \tfrac13 i_I d\la_c = -\la(c)
\text{ by \ref{prop:scale_invariance_of_conformal_Liouville}.}
\\  
(i_{X}\Om^\la)_c &= i_{X_c}(\la.\Om^\id + \Th^\id \wedge d\la)_c =\la(c). i_{X_c}\Om^\id_c + \Th^\id_c(X_c).d\la_c -i_{X_c}d\la\rev{(c)}.\Th^\id_c
\\&
=\la(c).d\la_c - \la(c).d\la_c - 0 = 0. 
\end{align} 

Note also that the scaling field $I_c\coloneqq c$ with flow $\on{Fl}^I_t (c) = e^t.c$ is  in the kernel of $\Om^ \lambda_c$ as we have
\begin{align}
i_I\Om^\la_c=\la i_I \Om^\id_c +\Th^\id(c)d\la-D_{c,c}\la \Th^\id=-3\la \Th^\id +0-D_{c,c}\la\Th^\id=0.
\end{align}
Thus  $\bar I$ and $\bar X$, the $\pi$-related versions of $I$ and $X$, are in the kernel of $\bar\Om^\la$. 
\begin{align}
(\L_I\la)(c) &= d\la(c) = -3\la(c) 
\\
\L_I\Th^\id &= i_I d \Th^\id=-i_I \Om^\id=3\Th^\id
\\
\L_I\Om^\id &= -\L_I d\Th^\id  = - d\L_I \Th^\id =  -d(3 \Th^\id) = 3\Om^\id
\\
-3 d\la &= \L_I d\la = \L_I (i_X\Om^\id) = (i_X \L_I + i_{[I,X]})\Om^\id = 3 i_X\Om^\id + i_{[I,X]}\Om^\id
\\
i_{[I,X]}\Om^\id &= -6 d\la = -6 i_X\Om^\id
\end{align}
Thus $i_{[I,X]+6X}\Om^\id = 0$, so $[I,X]+6X$ is in the kernel of $\Om^\id$. Their $\pi$-related version $[\bar I,\bar X] + 6\bar X$ is in the kernel of $\bar \Om^\id$ which is weakly non-degenerate on $B_i$. So $[\bar I,\bar X]= -6\bar X$ and also $[I,X]=-6 X$.
Thus if the Frobenius integrability theorem applies in this situation (equivalently,  if the local flow of $X$ exists), then the fields $I$ and $X$ span an integrable distribution, and the leaf space exists. 

Now we shall make use of $\rev{\bar\la}(\bar c)= \La(c/\rev{\ell(c)}).\rev{\ell(c)}^{-3}$, \rev{where $\bar c = \pi(c)\in B_i$.}
The function $\La$ is defined on the $\ell$-unit sphere $\{c\in \Imm: \ell(c) =1\}$. To simplify notation, extend it constantly to $\Imm$ so that $\La(c) = \La|_{\{\ell=1\}}(c/\ell(c))$, \rev{and we let $\la = \bar\la\o \pi$ and $\La = \bar \La\o\pi$}. Then we have
\begin{align}
d\la_c(h) &= \ell(c)^{-3}\Big(d\La_c(h) - 3\La(c)\tfrac1{\ell(c)}\int\langle D_sh,D_sc\rangle ds\Big)
\\&
=d\La(\tfrac{c}{\ell(c})\Big(-\ell(c)^{-2}.\int\langle D_sh,D_sc\rangle ds.c + \ell(c)^{-1}h \Big) -  3\La(\tfrac{c}{\ell(c})\ell(c)^{-4}.\int\langle D_sh,D_sc\rangle ds
\\
\Om^\la_c(h,k) &= \la(c)\Om^\id_c(h,k) + (\Th^\id_c\wedge d\la_c)(h,k) 
\\&
= \La(c/\ell(c)).\ell(c)^{-3}.\Om^\id_c(h,k) 
+ \ell(c)^{-3}\Th^\id_c(h).\Big(d\La_c(k) - 3\La(c)\tfrac1{\ell(c)}\int\langle D_sk,D_sc\rangle ds\Big)
\\&\qquad
- \ell(c)^{-3}\Big(d\La_c(h) - 3\La(c)\tfrac1{\ell(c)}\int\langle D_sh,D_sc\rangle ds\Big).\Th^\id_c(k).
\end{align}

    We have   diffeomorphisms which are equivariant under scalings
\begin{align}
    \Imm(S^1,\mathbb R^3)/\Diffp &\cong \Imm(S^1,\mathbb R^3)/(\Diffp\x \mathbb R_{>0})\x \mathbb R_{>0}
    \\&
    \cong \{\bar c\in \Imm(S^1,\mathbb R^3)/\Diffp: \rev{\bar\ell}(\bar c) =1\}\x \mathbb R_{>0}
    \\&
    \cong \{\bar c\in \Imm(S^1,\mathbb R^3)/\Diffp: \rev{\bar\la}(\bar c) =1\}\x \mathbb R_{>0}
    \\
    \bar c\quad &\longleftrightarrow\quad \big(\frac1{\rev{\bar\ell}(\bar c)}\bar c, \rev{\bar\ell}(\bar c)\big)
    \longleftrightarrow \big(\bar\La(\bar c/\rev{\bar \ell(\bar c)})^3\bar c, \rev{\bar\ell}(\bar c)\big)
\end{align}
and pre-symplectomorphisms  
\begin{align}
 &(\{\bar c\in B_i: \rev{\bar\ell}(\bar c) =1\},\bar\Om^\la)\ni \bar c\mapsto F(\bar c)=\rev{\bar\La}(\bar c)^{1/3}\bar c\in (\{\bar c\in B_i: \rev{\bar\la}(\bar c) =1\},\bar\Om^\id)  
 \\
 &(\{\bar c\in B_i: \rev{\bar\ell}(\bar c) =1\},\bar\Om^\la)
 \xhookrightarrow{i_{\ell}} (B_i, \bar \Om^\la)
 \\
 &(\{\bar c\in B_i: \rev{\bar\la}(\bar c) =1\},\bar\Om^\id)
 \xhookrightarrow{i_{\la}} (B_i, \bar \Om^\la)\quad \text{since}
\\
&dF\rev{_c}(k) = \tfrac13\La(c)^{-2/3}d\La(c)(k).c + \La(c)^{1/3} k
\\
&D_s F(c) = \tfrac13\La(c)^{-2/3}d\La(c)(D_sc).c + \La(c)^{1/3} D_sc
\\
&(F^*\Om^\id)_c(h,k) = \Om^\id_{F(c)}(dF\rev{_c}(h),dF\rev{_c}(k))
\\&
= 3\int\Big\langle 
(\tfrac13\La(c)^{-2/3}d\La\rev{_c}(D_sc).c + \La(c)^{1/3} D_sc) \x 
\x (\tfrac13\La(c)^{-2/3}d\La\rev{_c}(h).c + \La(c)^{1/3} h),
\\&\qquad\qquad\qquad
(\tfrac13\La(c)^{-2/3}d\La\rev{_c}(k).c + \La(c)^{1/3} k)
\Big\rangle\,ds
\\&
= \La(c)\Om^\id_c(h,k) +\Th^\id_c(h).d\La(c)(k) - \Th^\id_c(k).d\La(c)(h)
\end{align}
Since $(B_i,\bar\Om^\id)$ is weakly symplectic and  $\{\bar c\in B_i: \bar\la(\bar c) =1\}$ is a codimension 1 sub-orbifold diffeomorphic to $\{\bar c\in B_i: \bar\ell(\bar c) =1\}$, the kernel of $(i_\la^*\bar\Om^\id_{\bar c})$ is 1-dimensional, and we have already found it as $\bar X= \on{grad}^{\bar\Om^\id}\bar\la$ which is tangent to $\{\bar c\in B_i: \bar\la(\bar c) =1\}$.
\end{proof} 

\begin{remark}[Symplectic reduction]
    Our reduction of the space $B_i(S^1,\R^3)$ in the second case of \autoref{thm:sympl_conf} can be  seen as an infinite-dimensional instance of the Marsden-Weinstein-Meyer symplectic reduction. 
    To see this, let us set $\bar X\coloneqq \on{grad}_{\bar c}^{\bar \Om^{\on{id}}} \bar \lambda$ and take the momentum map $\bar J\colon B_i(S^1,\R^3)\to \R$ by $\bar J(\bar c)\coloneqq \bar \lambda(\bar c)$, with the corresponding group action being the time-$t$ flow of $\bar X$ with $\bar c$ as initial data. We have shown that $\bar \Omega^{\lambda}$ is degenerate on the codimension-1 sub-orbifold $\bar J^{-1}(1)=\{\bar c \in B_i(S^1,\R^3) \mid \bar \lambda(\bar c)=1\}$, and that it becomes symplectic when factored onto the codimension-2 sub-orbifold $\bar J^{-1}(1)/ \on{grad}^{\bar \Om^{\on{id}}} \bar \lambda$.
    
    We also remark that the dual product for the momentum map $\bar J$ is just the multiplication of scalar values as we have
    \begin{align}
        \langle \bar J(\bar c), t\rangle = -\bar \Th_{\bar c}^\id(t.\bar X)=t. \bar \lambda(\bar c)
    \end{align}
    for $t\in \R$ such that the time-$t$ flow map of $\bar X$ exists.  Here we used the invariance of $\bar \Th_{\bar c}^\id$ under the flow of $\bar X$, which is shown by $\L_{\bar X} \bar \Th_{\bar c}^\id=d i_{\bar X} \bar \Th_{\bar c}^\id + i_{\bar X} \bar \Om_{\bar c}^\id= -d\bar \lambda +d\bar \lambda=0$ mimicking computations in the proof of \autoref{thm:sympl_conf}. We may get the same result also using $\bar \Th^\lambda$ and $ \on{grad}_{\bar c}^{\bar \Om^\lambda} \bar \lambda = T_c \pi (\on{hgrad}_c^{\Om^\lambda} \lambda)$ (cf. Proposition \ref{prop:hamVF_conformal}) instead of $\bar \Th^{\id}$ and $\bar X= \on{grad}_{\bar c}^{\bar \Om^{\on{id}}} \bar \lambda$.
\end{remark}

\begin{remark}[A pseudo-Riemannian metric via $\Om^L$ and $\J=D_s c \x \cdot$]
Using the presymplectic form $\Om^L$ and the \rev{standard} almost complex structure
\begin{align*}
    \J \colon T_{\rev{\bar c}}B_i(S^1,\R^3) &\to T_{\rev{\bar c}} B_i(S^1,\R^3)\\
    \rev{\bar h} &\mapsto \rev{\overline{D_s c \times h}},
\end{align*}
we may define a pseudo-Riemannian metric $\rev{\bar R}$, \rev{which} is compatible with $\bar\Om^L$ \rev{via $\J$}. Note that such \rev{$\bar R$} is different from the Riemannian metric $G^L$ \rev{factored onto $B_i(S^1,\R^3)$} we used to define the Liouville form $\Th^L$.

We here compute \rev{$\bar R$} for the conformal factor $L_c=\la(c)$. In the computation, we identify the tangent space at $\rev{\bar c}$ of $B_i(S^1,\mathbb R^3)$ with the space of tangent vectors $h$ in $T_c\Imm(S^1,\mathbb R^3)$, 
such that $\langle D_sc,h\rangle=0$, \rev{and denote $J=D_s c\times \cdot$}.

We then have
\begin{align*}
\rev{\bar  R_{\bar c}}(\rev{\bar h,\bar k})&
\coloneqq \Om^{\la}_c(h,\rev{J} k)
=\la(c)\Omega_c^{\operatorname{id}}(h,\rev{J} k)
+\Th_c(h) \L_{\rev{J} k} \la(c)-\Th_c(\rev{J} k) \L_h \la(c).
\end{align*}
By design $\rev{\bar R}$ is non-degenerate. The symmetry follows from $\Om_c^\la(\rev{J} h,k)=-\Om_c^\la(h,\rev{J} k)$. It is, however, not clear if \rev{$\bar R$}  is positive-definite, i.e., if it is a Riemannian metric. We leave this question open for future research. 
\end{remark}

\subsection{Hamiltonian vector fields}\label{sec:hamiltonian_conformal}
Now we compute the horizontal Hamiltonian vector field $\on{hgrad}^{\Om^\la}H$ for a given reparametrization-invariant Hamiltonian $H$. We express $\on{hgrad}^{\Om^\la}H$ in terms of $\on{grad}^{G^\id}H$ since the latter is in general relatively easy to obtain.

\begin{proposition}[Horizontal Hamiltonian vector fields for $\Om^{\la}$]
\label{prop:hamVF_conformal}  
Assume that $\on{grad}^{G^\id}\la$ exists.
 \begin{enumerate}[label=(\alph*)]
     \item \label{prop:hamVF_conformal_casea} Consider a $\Diffp$-invariant Hamiltonian $H\colon \Imm(S^1,\R^3)\to \R^3$. If $3\la + \L_I \la\neq 0$ on any open subset of $\Imm$ then
\begin{align}
    \on{hgrad}^{\Om^\la}H
    =-\frac{1}{3\rev{\lambda(c)}} \Bigg\{  D_s c\x \on{grad}^{G^\id} H& \\
    +  \frac{1}{3\rev{\lambda(c)}+D_{c,c}\rev{\lambda(c)}} \Big[& \langle \on{grad}_c^{G^\id}\la, D_s c\x \on{grad}^{G^\id}H\rangle_{L^2_{ds}(S^1)} D_s c \x (D_s c \x c) \\
     &- \langle  c, \on{grad}^{G^\id} H\rangle_{L^2_{ds}(S^1)} D_s c\x \on{grad}_c^{G^\id}\la \Big]\Bigg\}\,.
\end{align}
\item \label{prop:hamVF_conformal_caseb}Consider a Hamiltonian $H\colon \Imm(S^1,\R^3)\to \R^3$ invariant under $\Diffp$ and the flows of the scaling vector field $I$ and $\on{hgrad}^{\OmMW}\la= -D_sc\x \on{grad}^{G^\id}\la$. If $3\la\rev{(c)} + \rev{\L_{c,I}\la(c)} =0$ for all $c$ then $\on{hgrad}^{\Om^\la}_cH$ is the orthonormal projection of 
\begin{align}
    X^H_c
    =-\frac{1}{3\la_c} D_s c\x \on{grad}^{G^\id}_c H
    = \frac{1}{\la_c} \on{hgrad}^{\Om^\id}_c H
\end{align}
to the $G^\id_c$-orthogonal complement of the kernel of $\Om^\la$, which is spanned by $I$, $\on{hgrad}^{\OmMW}\lambda$, and $\{a.D_s c \mid a\in C^\infty(S^1)\}$, namely
\begin{align}
\on{hgrad}^{\Om^\la}_cH &=\frac{1}{3\rev{\lambda(c)}} \Big(- D_s c\x \on{grad}^{G^\id}_c H + a_c.(1-\on{pr}_c)I_c - b_c. D_s c \x \on{grad}^{G^\id}\la\Big)
\end{align}
where the pair $(a_c,b_c)\in \R^2$ is given by
\begin{align}
\begin{pmatrix}
a_c \\
b_c
\end{pmatrix}
=
\begin{pmatrix}
\langle v,v\rangle_{L_2} & \langle v,w\rangle_{L_2} \\
\langle v,w\rangle_{L_2} &\langle w,w\rangle_{L_2}
\end{pmatrix}^{-1}
\begin{pmatrix}
\langle u,v\rangle_{L_2}\\
\langle u,w\rangle_{L_2}
\end{pmatrix}.
\end{align}
with
\begin{align}
    &u=-D_s c \x \on{grad}^{G^\id}H=\on{hgrad}^{\OmMW}H\\
    &v=(1-\on{pr}_c)I_c\\
    &w=-D_s c \x \on{grad}^{G^\id}\la=\on{hgrad}^{\OmMW}\la
\end{align}
where the matrix appearing here is invertible because $v_c$ and $w_c$ are linearly independent at every $c\in \Imm(S^1,\R^3)$.

\end{enumerate}
\end{proposition}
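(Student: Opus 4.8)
The whole computation reduces to the identity $\Om^\la=\la\,\Om^\id+\Th^\id\wedge d\la$ of \autoref{thm:sympl_conf} together with three ``inversion identities'' for $\Om^\id$. Since $\Om^\id=3\OmMW$ and $\OmMW_c(\cdot,\cdot)=G^\id_c(D_sc\x\cdot,\cdot)$ (\autoref{rmk:horizontal_MW_field}), we have $i_{\on{hgrad}^{\Om^\id}H}\Om^\id=dH$ and $i_{\on{hgrad}^{\Om^\id}\la}\Om^\id=d\la$, with $\on{hgrad}^{\Om^\id}H=-\tfrac13D_sc\x\on{grad}^{G^\id}H$ and $\on{hgrad}^{\Om^\id}\la=-\tfrac13D_sc\x\on{grad}^{G^\id}\la$; and a short vector-algebra computation gives $i_V\Om^\id=-\Th^\id$ for $V_c\coloneqq\tfrac13(1-\on{pr}_c)c$. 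Using in addition that $\on{grad}^{G^\id}H$ and $\on{grad}^{G^\id}\la$ are \emph{pointwise} perpendicular to $D_sc$ (since $dH,d\la$ annihilate the vertical bundle, by $\Diffp$-invariance), one reads off all the scalar pairings that will appear — e.g.\ $d\la(\on{hgrad}^{\Om^\id}\la)=0=\Th^\id(V)$, $d\la(V)=\Th^\id(\on{hgrad}^{\Om^\id}\la)=\tfrac13D_{c,c}\la$, $\Th^\id(\on{hgrad}^{\Om^\id}H)=\tfrac13D_{c,c}H$ and $d\la(\on{hgrad}^{\Om^\id}H)=-\tfrac13\langle\on{grad}^{G^\id}\la,\,D_sc\x\on{grad}^{G^\id}H\rangle_{L^2}$ — directly from these identities.

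For part \ref{prop:hamVF_conformal_casea}, the plan is to make the ansatz $X=\tfrac1\la\on{hgrad}^{\Om^\id}H+p\,V+q\,\on{hgrad}^{\Om^\id}\la$ with scalars $p=p_c$, $q=q_c$ to be determined; note $X$ is automatically horizontal ($X\perp D_sc$ pointwise). Forming $i_X\Om^\la$ with $\Om^\la=\la\Om^\id+\Th^\id\wedge d\la$ and the inversion identities gives
\[
 i_X\Om^\la=dH+\bigl(-\la p-d\la(X)\bigr)\,\Th^\id+\bigl(\la q+\Th^\id(X)\bigr)\,d\la .
\]
Because $d\la(V)$ and $\Th^\id(\on{hgrad}^{\Om^\id}\la)$ enter but $d\la(\on{hgrad}^{\Om^\id}\la)=0=\Th^\id(V)$, the two bracketed coefficients \emph{decouple} into one scalar equation for $p$ and one for $q$, each of the form $(\text{unknown})\cdot\tfrac13(3\la+D_{c,c}\la)=(\text{known})$. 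Under the hypothesis $3\la+D_{c,c}\la\ne0$ these are uniquely solvable, giving $p=\frac{\langle\on{grad}_c^{G^\id}\la,\,D_sc\x\on{grad}^{G^\id}H\rangle_{L^2}}{\la(3\la+D_{c,c}\la)}$ and $q=\frac{-\langle c,\,\on{grad}^{G^\id}H\rangle_{L^2}}{\la(3\la+D_{c,c}\la)}$; then both brackets vanish, so $i_X\Om^\la=dH$. Substituting and rewriting $(1-\on{pr}_c)c=-D_sc\x(D_sc\x c)$ gives the stated formula, and $X=\on{hgrad}^{\Om^\la}H$ is the unique horizontal solution since in this case $\ker\Om^\la$ is the vertical bundle $\{a.D_sc\}$ by \autoref{thm:sympl_conf}\ref{thm:sympl_conf:casea}. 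The version on $B_i$ follows by applying $T\pi$ as in \autoref{horizontalHamiltonian}.

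For part \ref{prop:hamVF_conformal_caseb}, recall from the proof of \autoref{thm:sympl_conf}\ref{thm:sympl_conf:caseb} that now $D_{c,c}\la=-3\la$ and $\ker\Om^\la$ is spanned by $\{a.D_sc\}$, the scaling field $I$, and $\on{hgrad}^{\OmMW}\la=-D_sc\x\on{grad}^{G^\id}\la=3\on{hgrad}^{\Om^\id}\la$. Put $X^H\coloneqq\tfrac1\la\on{hgrad}^{\Om^\id}H$. The key point is that the invariance of $H$ under the flows of $I$ and of $\on{hgrad}^{\OmMW}\la$ — i.e.\ $dH(I)=0$ and $dH(\on{hgrad}^{\OmMW}\la)=0$ — becomes, via the inversion identities, exactly $\Th^\id(X^H)=0$ and $d\la(X^H)=0$, so the display above with $p=q=0$ collapses to $i_{X^H}\Om^\la=dH$. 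Since the map $Z\mapsto i_Z\Om^\la$ has kernel $\ker\Om^\la$, the unique $Z$ with $i_Z\Om^\la=dH$ and $Z\perp_{G^\id}\ker\Om^\la$ — which is $\on{hgrad}^{\Om^\la}H$ (\autoref{horizontalHamiltonian}) — is the $G^\id$-orthogonal projection of $X^H$ onto $(\ker\Om^\la)^\perp$. Since $X^H\perp D_sc$ pointwise, hence $L^2$-orthogonal to $\{a.D_sc\}$, the projection only removes the component in the $2$-dimensional span of $v=(1-\on{pr}_c)I$ and $w=\on{hgrad}^{\OmMW}\la$; imposing $\langle\on{hgrad}^{\Om^\la}H,v\rangle_{L^2}=\langle\on{hgrad}^{\Om^\la}H,w\rangle_{L^2}=0$ yields the stated $2\times2$ Gram system for $(a_c,b_c)$. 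Its matrix is invertible because $v_c,w_c$ are linearly independent at every $c$: indeed $\Om^\id_c(v_c,w_c)=-3\,d\la(v_c)=-3D_{c,c}\la=9\la(c)\ne0$, which rules out $w_c$ being a scalar multiple of $v_c$. Applying $T\pi$ again gives $\on{grad}^{\bar\Om^\la}\bar H$.

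The main obstacle I anticipate is the careful bookkeeping: in part \ref{prop:hamVF_conformal_casea} one must see that the correction coefficients decouple and are governed by the single scalar $3\la+D_{c,c}\la$ (not a genuine $2\times2$ system), and in part \ref{prop:hamVF_conformal_caseb} one must verify that the two invariance hypotheses on $H$ are precisely what make $X^H$ already satisfy $i_{(\cdot)}\Om^\la=dH$, so that only the \emph{finite-dimensional} correction along $\on{span}(v,w)$ is needed — which is exactly what keeps the orthogonal projection in the statement meaningful even though $\{a.D_sc\}$ is not $L^2$-closed.
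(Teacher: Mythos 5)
Your proposal is correct and follows essentially the same route as the paper: the same ansatz (a multiple of $D_sc\x\on{grad}^{G^\id}H$ corrected along $(1-\on{pr}_c)c$ and $D_sc\x\on{grad}^{G^\id}\la$), the same observation that the two correction coefficients decouple and are both governed by the scalar $3\la+D_{c,c}\la$, and the same treatment of case (b) by checking that the invariance hypotheses make the uncorrected field a solution and then projecting off $\on{span}(v,w)$. The only difference is presentational — you work with the form identity $\Om^\la=\la\Om^\id+\Th^\id\wedge d\la$ and the contraction identities $i_{(\cdot)}\Om^\id$, whereas the paper isolates the integrand of $\Om^\la(X_H,k)$ and applies $-D_sc\x$ to get a pointwise vector equation; both yield the same two scalar equations.
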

Note that in the scale-invariant case (Case \ref{prop:hamVF_conformal_caseb}), the flow of the field $Y^H$ projects to the Hamiltonian flow of $\bar H$ on  $\{\bar c\in B_i(S^1,\R^3): \bar\la\rev{(\bar c)}=1\}/\on{grad}^{\bar\Om^\id}\bar\la$ with respect to a multiple of the Marsden-Weinstein symplectic structure. 

\begin{proof}
Let us denote for simplicity $A\coloneqq \on{grad}^{G^\id}\la$ and $X_H\coloneqq \on{hgrad}^{\Om^\la}H$. 
We can isolate out $k$ from $\Om^\la_c (X_H,k)$ by
    \begin{align}
        \Om^\la (X_H,k)
        &=\la \Om^\id(X_H,k)+\Th^\id(X_H)D_{c,k}\la - \Th^\id(k)D_{c,X_H}\la\\
        &=\int_{S^1} \langle 3\la. D_s c \x X_H - D_{c,X_H}\la. c \x D_s c +\Th^\id (X_H) A, k\rangle ds.
    \end{align}
    Using $\Om^\la_c (X_H,k)=dH(k)=G^\id (\on{grad}^{G^\id} H,k)$, we get 
    \begin{align}
        0 &= \Om^\la (X_H,k) - dH(k)\\
        &=\int_{S^1} \langle 3\la. D_s c \x X_H - D_{c,X_H}\la. c \x D_s c +\Th^\id (X_H) A - \on{grad}^{G^\id} H, k\rangle ds.
    \end{align}
    This must be satisfied for any $k$, namely we have
    \begin{align}\label{eq:equation_with_XH}
        3\la.D_s c \x X_H - D_{c,X_H}\la. c \x D_s c +\Th^\id (X_H) A - \on{grad}^{G^\id} H=0.
    \end{align}
    Our goal is to solve this for $X_H$. Applying $- D_s c \x$ reads
    \begin{align}\label{eq:equation_in_XH}
       3\la. X_H - D_{c,X_H}\la. D_s c \x( D_s c \x c) -\Th^\id (X_H) D_s c\x A + D_s c \x \on{grad}^{G^\id} H=0.
    \end{align}
    
    Let us set 
    \begin{align}\label{eq:temporal_XH}
        X_H=\frac{-1}{3\la} D_s c \x \on{grad}^{G^\id} H + K_1 D_s c \x( D_s c \x c) +K_2 D_s c \x A_c
    \end{align}
    with some coefficients $K_1, K_2$ to be determined. 
    
    From 
    \begin{align}
        D_{c,X_H}\la=\int \langle A_c,X_H\rangle ds, \quad
        \Th^\id(X_H)=\int \langle c\x D_s c, X_H \rangle ds,
    \end{align}
    we get 
    \begin{align}
        0
        &=3 \la K_1. D_s c \x (D_s c \x c) + 3\la K_2. D_s c \x A_c\\  
        &\qquad - \int \langle A_c, \frac{-1}{3\la} D_s c \x \on{grad}^{G^\id} H + K_1 D_s c \x( D_s c \x c) \rangle ds. D_s c \x (D_s c \x c)\\
        & \qquad - \int \langle A_c, \frac{-1}{3\la} D_s c \x \on{grad}^{G^\id} H + K_2 D_s c \x A_c\rangle ds. D_s c \x A_c\\
        &= \Big[ K_1  \left(3\la-\int \langle A_c, D_sc \x (D_s c \x c)\rangle ds \right) 
        \\&\qquad\qquad\qquad  
        +\frac{1}{3\la}\int \langle A_c,D_s c \x \on{grad}^{G^\id} H\rangle ds \Big] D_s c \x (D_s c \x c)
        \\&\qquad 
        + \Big[ K_2  \left(3\la+\int \langle D_ s c \x c, D_sc \x A_c \rangle ds \right) 
        \\&\qquad  
        -\frac{1}{3\la}\int \langle D_s c \x c,  D_s c \x \on{grad}^{G^\id} H\rangle ds \Big] D_s c \x A_c 
        \\& \label{eq:temporal_XH_with_K} 
        =\left[ K_1  \left(3\la+D_{c,c}\la \right) +\frac{1}{3\la}\int \langle A_c,D_s c \x \on{grad}^{G^\id} H\rangle ds \right] D_s c \x (D_s c \x c)
        \\&\qquad  
        + \left[ K_2  \left(3\la+D_{c,c}\la \right) -\frac{1}{3\la}\int \langle D_s c \x c,  D_s c \x \on{grad}^{G^\id} H\rangle ds \right] D_s c \x A_c.
    \end{align}
    In the last step we used  
    \begin{align}
        -\int \langle  D_sc \x (D_s c \x c), A_c\rangle ds  
        = \int \langle D_ s c \x c, D_sc \x A_c \rangle ds 
        = D_{c,(1-\on{pr}_c) c}\la
        =D_{c,c}\la
    \end{align}
    where the last equality is due to the reparametrization-invariance of $\la$.

    Case (a): 
    Observe that 
    \begin{align}
        K_1&=-\frac{1}{(3\la+D_{c,c}\la)3\la}\int \langle A_c, D_sc \x (D_s c \x c)\rangle ds, 
        \\
        K_2&=\frac{1}{(3\la+D_{c,c}\la)3\la}\int \langle D_s c \x c,  D_s c \x \on{grad}^{G^\id} H\rangle ds
    \\&
    =\frac{1}{(3\la+D_{c,c}\la)3\la}\int \langle  c,   \on{grad}^{G^\id} H\rangle ds\quad\text{since } \on{grad}^{G^\id} H \bot D_sc.
    \end{align}
    satisfy the equality. Substituting $K_1$ and $K_2$ to \eqref{eq:temporal_XH}, we obtain the stated formula. 
    Note that the choice of the pair $(K_1,K_2)$ is unique since $ -(1-\on{pr}_c)I_c=D_sc \x (D_s c \x c)$ and $-\on{hgrad}^{\OmMW}\la= D_s c \x A_c$ are linearly independent at least for some $\theta$, namely in a small neighborhood. This follows from the linear independence of these two tangent vectors on $T_c\Imm(S^1,\R^3)$, which is seen by the argument in the comment after Theorem \ref{thm:sympl_conf} \ref{thm:sympl_conf:caseb} with the reparametrization invariance of $\Om^\id$.


     Case (b): By assumption $3\la+D_{c,c}\la=0$ we see from \eqref{eq:temporal_XH_with_K} that,
     \begin{multline}
         0=\left[\int \langle A_c,D_s c \x \on{grad}^{G^\id} H\rangle ds \right] D_s c \x (D_s c \x c) - 
        \\
        -\left[\int \langle D_s c \x c,  D_s c \x \on{grad}^{G^\id} H\rangle ds \right] D_s c \x A_c.
     \end{multline}
     Using this equality, it is easy to check that
     \begin{align}
         X_H\coloneqq \frac{-1}{3\la} D_s c \x \on{grad}^{G^\id} H
     \end{align}
     satisfies \eqref{eq:equation_with_XH}. At this point there are up to two degrees of freedom in vector fields that satisfy \eqref{eq:equation_with_XH}.
     We can make $X_H$ the unique horizontal lift  of $\on{grad}^{\bar\Om^{\la}}\bar H$ by performing the $G_c^\id$-orthogonal projection with respect to $(1-\on{pr}_c)I_c$ and $\on{hgrad}_c^{\OmMW}\lambda$, and hence obtain the stated expression. The resulting vector field $X_H$ is $G^\id$-orthogonal to $\{a.D_sc \mid a \in C^\infty(S^1)\}$, $\on{hgrad}^{\OmMW}\la$ and $I_c$. 
\end{proof}

\section{Symplectic structures induced by length weighted metrics}\label{sec:symp_length}
 Next we study a special class of symplectic structures induced by conformal factors introduced in the previous section; namely we consider length-weighted metrics as studied in~\cite{yezzi2005conformal,michor_mumford2006,shah2008H0}. More precisely, we consider  operators of the form  $L_c=\Ph(\rev{\ell(c)})$ where $\rev{\ell(c)}=\int_{S^1}|c_\theta|d\theta$ denotes the length of the curve $c$ and $\Ph: \mathbb R_{> 0}\to \mathbb R_{> 0}$ is a suitable function.
 Using Theorem~\ref{thm:sympl_conf} we obtain the following result concerning the induced symplectic structure~$\Omega^{\Ph(\ell)}$:

\begin{corollary}[The (pre)symplectic structure $\Om^{\Ph(\ell)}$]\label{cor:length_symp}
Let $\Ph\in \rev{C^\infty(\R_{>0},\R_{> 0})}$. The induced (pre)symplectic structure of the $G^{\Ph(\ell)}$-metric is given by:
\begin{align} \label{eq:Om_Phi(ell)}
\Om^{\Ph(\ell)}_c(h,k)
     &=\Ph(\rev{\ell(c)})\Omega^{\operatorname{id}}(h,k)
     -\Ph'(\rev{\ell(c)})\left(\int_{S^1} \langle D_sh,D_sc\rangle ds\; \Theta^{\operatorname{id}}(k)
     -\int_{S^1} \langle D_sk,D_sc\rangle ds\; \Theta^{\operatorname{id}}(h) \right)\\
      &=\Ph(\rev{\ell(c)})\Omega^{\operatorname{id}}(h,k)
     +\Ph'(\rev{\ell(c)})\left(\int_{S^1} \langle h,D^2_sc\rangle ds\; \Theta^{\operatorname{id}}(k)
     -\int_{S^1} \langle k,D^2_sc\rangle ds\; \Theta^{\operatorname{id}}(h) \right).
\end{align}
Furthermore, we have:
\begin{enumerate}[label=(\alph*)]
\item \label{cor:length_symp:casea} If $\Ph(\ell)\neq C\ell^{-3}$
 then the presymplectic structure $\bar\Om^{\Ph(\ell)}$ on $B_i(S^1,\mathbb R^3)$ is non-de\-ge\-nerate and thus symplectic. 
\item \label{cor:length_symp:caseb}
If $\Ph(\ell) = C\ell^{-3}$, then $\Om^\la$ induces a non-degenerate 2-form on $\Imm(S^1,\mathbb R^3)/(\Diffp \x \F) \simeq \{\bar c\in B_i(S^1,\R^3): \bar\ell=1\}/\on{span}(\on{grad}^{\bar\Om^\id}\bar\ell)$, where it agrees with a multiple of the Marsden-Weinstein symplectic structure. Here $\F$ is the 2-dimensional vector subbundle spanned by the scaling vector field $I$ and $\on{hgrad}^{\OmMW}\ell=D_s c \x D_s^2 c$.
\end{enumerate}
\end{corollary}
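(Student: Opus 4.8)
The plan is to derive Corollary~\ref{cor:length_symp} as a direct specialization of Theorem~\ref{thm:sympl_conf} with the conformal factor $\la(c) = \Ph(\ell_c)$. First I would verify that $\la = \Ph(\ell)$ satisfies the standing hypotheses: it is strictly positive and $\Diffp$-invariant since $\ell_c = \int_{S^1}|c'|d\theta$ is, and $\on{grad}^{G^\id}\la$ exists and is smooth because, using the variational formula $D_{c,h}\ell = \int_{S^1}\langle D_sh, D_sc\rangle ds = -\int_{S^1}\langle h, D_s^2 c\rangle ds$ (integration by parts along the closed curve), one gets $d\la_c(h) = \Ph'(\ell_c)\int_{S^1}\langle D_sh, D_sc\rangle ds$, so that $\on{grad}^{G^\id}_c\la = -\Ph'(\ell_c) D_s^2 c$. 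Substituting this expression for $d\la$ into the general formula $\Omega^\la = \la\,\Omega^\id + \Th^\id \wedge d\la$ from Theorem~\ref{thm:sympl_conf} and unwinding the wedge product $(\Th^\id \wedge d\la)(h,k) = \Th^\id(h)\,d\la(k) - \Th^\id(k)\,d\la(h)$ immediately produces the two displayed formulas for $\Om^{\Ph(\ell)}_c(h,k)$; the two lines differ only by the integration-by-parts identity $\int\langle D_sh,D_sc\rangle ds = -\int\langle h, D_s^2 c\rangle ds$.

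Next I would identify which case of Theorem~\ref{thm:sympl_conf} applies by computing the scaling derivative. Since under $c\mapsto tc$ we have $\ell_{tc} = t\,\ell_c$, the chain rule gives $\L_I\la(c) = D_{c,c}\la = \Ph'(\ell_c)\,\ell_c$, hence $3\la_c + \L_I\la_c = 3\Ph(\ell_c) + \ell_c\Ph'(\ell_c)$. This vanishes identically precisely when $\Ph$ solves the ODE $3\Ph(\ell) + \ell\Ph'(\ell) = 0$, i.e.\ $(\ell^3\Ph(\ell))' = 0$, whose solutions are $\Ph(\ell) = C\ell^{-3}$ — matching condition~\ref{item:3} of Lemma~\ref{prop:scale_invariance_of_conformal_Liouville} with $\La \equiv C$. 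So for $\Ph(\ell)\neq C\ell^{-3}$ the quantity $3\Ph(\ell_c) + \ell_c\Ph'(\ell_c)$ is a nonzero real number (a continuous function of $\ell_c$ alone) on an open dense set — in fact one should check it is nowhere zero on any open subset of $\Imm$, which holds because if it vanished on an open set of immersions it would vanish for an open interval of length values and hence, by the ODE, on all of $\R_{>0}$, forcing $\Ph = C\ell^{-3}$. Then Theorem~\ref{thm:sympl_conf}\ref{thm:sympl_conf:casea} gives case~\ref{cor:length_symp:casea}, and Theorem~\ref{thm:sympl_conf}\ref{thm:sympl_conf:caseb} gives case~\ref{cor:length_symp:caseb}, with $\mathcal F$ spanned by $I$ and $\on{hgrad}^{\Om^\id}\la$.

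Finally I would translate the abstract description of $\mathcal F$ in case~\ref{cor:length_symp:caseb} into the concrete vector fields claimed. Using Remark~\ref{rmk:horizontal_MW_field}, $\on{hgrad}^{\OmMW}\ell = -D_sc\x\on{grad}^{G^\id}\ell = -D_sc\x(-\Ph'(\ell_c)/C\cdot\dots)$ — more simply, $\on{grad}^{G^\id}\ell = -D_s^2 c$, so $\on{hgrad}^{\OmMW}\ell = -D_sc\x(-D_s^2c) = D_sc\x D_s^2 c$, which is the stated generator; note $\on{hgrad}^{\Om^\id}\la = \on{hgrad}^{3\OmMW}\la = \tfrac13\Ph'(\ell_c)\on{hgrad}^{\OmMW}\ell$ spans the same line as $\on{hgrad}^{\OmMW}\ell$ away from zeros of $\Ph'$, and when $\Ph = C\ell^{-3}$ one has $\Ph' \neq 0$ everywhere, so the two spanning choices agree. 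The identification $\{\bar c : \la(\bar c)=1\}/\on{span}(\on{grad}^{\bar\Om^\id}\bar\la) \simeq \{\bar c : \ell(\bar c)=1\}/\on{span}(\on{grad}^{\bar\Om^\id}\bar\ell)$ is immediate because $\la = C\ell^{-3}$ is a monotone function of $\ell$, so the level sets coincide and the gradient directions are parallel. The main obstacle — though a mild one — is the nowhere-vanishing argument for $3\Ph(\ell_c) + \ell_c\Ph'(\ell_c)$ needed to invoke case~\ref{thm:sympl_conf:casea}: one must rule out its vanishing on an open subset of $\Imm$ rather than merely at isolated points, which is where the reduction to a first-order linear ODE in $\ell$ does the work.
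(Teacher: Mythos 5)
Your proposal is correct and follows essentially the same route as the paper: specialize Theorem~\ref{thm:sympl_conf} to $\la=\Ph(\ell)$, compute $\on{grad}^{G^\id}\Ph(\ell_c)=-\Ph'(\ell_c)D_s^2c$ via the first variation of length, and read off the two cases from the sign of $3\Ph(\ell)+\ell\Ph'(\ell)$; the paper's primary derivation of the formula goes through \eqref{eq:Omega_L}, but it explicitly notes that substituting into \eqref{eq:Om_lambda}, as you do, yields it directly. One small caveat: your claim that vanishing of $3\Ph+\ell\Ph'$ on an open interval of lengths forces $\Ph=C\ell^{-3}$ on all of $\R_{>0}$ is false for a general $C^1$ function (the ODE only pins $\Ph$ down on that interval), though this merely exposes an ambiguity in the corollary's hypothesis ``$\Ph(\ell)\neq C\ell^{-3}$'' that the paper's own proof does not address either.
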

The Liouville form $\Th^{C\ell^{-3}}$ is invariant under the scaling action $c\mapsto a.c$ for $a\in \mathbb R_{>0}$, which is equivalent to $\L_I\Th^{C\ell^{-3}}=0$. 
Note also that we have a  diffeomorphism which is equivariant under scalings:
\begin{align}
    \Imm(S^1,\mathbb R^3)/\Diffp &\cong \Imm(S^1,\mathbb R^3)/(\Diffp\x \mathbb R_{>0})\x \mathbb R_{>0}
    \\&
    \cong \{\bar c\in \Imm(S^1,\mathbb R^3)/\Diffp: \rev{\bar\ell}(\bar c) =1\}\x \mathbb R_{>0}
    \\
    \bar c\quad &\longleftrightarrow\quad \big(\frac1{\rev{\bar\ell}(\bar c)}\bar c, \rev{\bar\ell}(\bar c)\big)
\end{align}

\begin{proof}
To calculate the formula for $\Omega^{\Ph(\ell)}$ we first need to calculate the variation of the length $\rev{\ell(c)}$. We have:
\begin{equation*}
D_{c,h}\rev{\ell(c)}=\int_{S^1} \langle D_sh,D_sc\rangle ds, \qquad D_{c,h}\Ph(\rev{\ell(c)}) = \Ph'(\rev{\ell(c)})\int_{S^1} \langle D_sh,D_sc\rangle ds\,.
\end{equation*}
Applying this to \eqref{eq:Omega_L} using integration by parts, we get 
\begin{align*} 
\Om^{\Ph(\ell)}_c(h,k)
&= 
    \int_{S^1}2\Ph(\rev{\ell(c)})\langle D_s c,  h \x k\rangle 
     - \Ph(\rev{\ell(c)})\langle c, D_s h \x  k - D_s k \x  h \rangle ds
    \nonumber\\&\qquad\qquad
     - \int_{S^1}\langle c\x D_s c, (D_{c,h}\Ph(\rev{\ell(c)}))k \rangle ds +\int_{S^1}\langle c\x D_s c,  (D_{c,k}\Ph(\rev{\ell(c)}))h\rangle ds\nonumber\\
     &= 
    3\Ph(\rev{\ell(c)})\int_{S^1}\langle D_s c,  h \x k\rangle ds
     -\Ph'(\rev{\ell(c)}) \int_{S^1} \langle D_sh,D_sc\rangle ds \int_{S^1}\langle c\x D_s c, k \rangle ds 
    \nonumber\\&\qquad\qquad
     +\Ph'(\rev{\ell(c)})\int_{S^1} \langle D_sk,D_sc\rangle ds\int_{S^1}\langle c\x D_s c,  h\rangle ds\nonumber\\
     &=\Ph(\rev{\ell(c)})\Omega^{\operatorname{id}}(h,k)
     -\Ph'(\rev{\ell(c)})\int_{S^1} \langle D_sh,D_sc\rangle ds\; \Theta^{\operatorname{id}}(k)
     +\Ph'(\rev{\ell(c)}) \int_{S^1} \langle D_sk,D_sc\rangle ds\; \Theta^{\operatorname{id}}(h) ,
\end{align*}
which proves the first formula for $\Omega$. 
We may directly draw the last expression applying \eqref{eq:Om_lambda} to $\la=\Phi(\ell)$.

Case \ref{cor:length_symp:casea}: 
It follows from Theorem \ref{thm:sympl_conf} \ref{thm:sympl_conf:casea} that $\ker\Omega^{\Phi(\ell)}=\{a. D_s c\mid a \in C^\infty(S^1)\}$,
namely $\Omega^{\Ph(\ell)}$ induces a symplectic form $\bar\Om^{\Ph(\ell)}$ on $B_i(S^1,\mathbb R^3)$. 

Case \ref{cor:length_symp:caseb}: By direct computation, we have $\on{hgrad}^{\Om^\id}C\ell^p=3 Cp \ell^{p-1} D_s c \x D_s^2 c$, which is a constant multiple of the Marsden-Weinstein flow $\on{hgrad}^{\OmMW}\ell=D_s c \x D_s ^2 c$, so these two vector fields span the same distribution. Now the statements follow directly from Theorem~\ref{thm:sympl_conf} \ref{thm:sympl_conf:caseb}.
\end{proof}

Now we will compute Hamiltonian vector fields. Therefore we  note that the conditions of Remark~\ref{horizontalHamiltonian} are satisfied, which allows us to obtain the following result:

\begin{corollary}
    [Horizontal Hamiltonian Vector Fields for $\Om^{\Ph(\ell)}$]
\label{cor:hamVF_Gphi}
    Consider a $\Diffp$-invariant Hamiltonian $H\colon \Imm(S^1,\R^3)\to \rev{\R}$. 
\begin{enumerate}[label=(\alph*)]
     \item \label{cor:hamVF_Gphi_casea}  If $\Ph(\ell)\ne C\ell^{-3}$, then:
\begin{multline}\label{eq:ham_flow_length_weight}
    \on{hgrad}_c^{\Om^{\Ph(\ell)}}H
    =\frac{1}{3\Phi(\rev{\ell(c)})} \Bigg\{ - D_s c\x \on{grad}^{G^\id} H \\
    +  \frac{\Phi'(\rev{\ell(c)})}{3\Phi(\rev{\ell(c)})+\Ph'(\rev{\ell(c)})\rev{\ell(c)}} \Big[ \langle D_s^2 c, D_s c\x \on{grad}^{G^\id} H\rangle_{L^2_{ds}(S^1)} D_s c \x (D_s c \x c) \\
     + \langle  c,  \on{grad}^{G^\id} H\rangle_{L^2_{ds}(S^1)} D_s c\x D^2_s c \Big]\Bigg\}\,.
\end{multline}
\item \label{cor:hamVF_Gphi_caseb} If $\Ph(\ell)= C\ell^{-3}$, and if the Hamiltonian $H\colon \Imm(S^1,\R^3)\to \R^3$ invariant under $\Diffp$ and the flows of $I$ and $\on{hgrad}^{\OmMW}\ell= D_sc\x D_s^2 c$, then $\on{hgrad}^{\Om^\la}_cH$ is the orthonormal projection of 
\begin{align}
    X^H_c
    =-\frac{\ell^3}{3 C} D_s c\x \on{grad}^{G^\id}H
\end{align}
to the $G^\id_c$-orthogonal complement of the kernel of $\Om^\la$, which is spanned by  $I$ and $\on{hgrad}^{\OmMW}\ell$, and $\{a.D_s c \mid a\in C^\infty(S^1)\}$.
\end{enumerate}
\end{corollary}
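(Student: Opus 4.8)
The plan is to reduce both parts of Corollary~\ref{cor:hamVF_Gphi} to the already-proven Proposition~\ref{prop:hamVF_conformal} via the substitution $\la = \Ph(\ell_c)$, since a length-weighted metric $G^{\Ph(\ell)}$ is exactly a conformal metric with conformal factor $\la(c) = \Ph(\ell_c)$, and this factor is manifestly $\Diffp$-invariant. First I would record the two elementary ingredients needed to instantiate the general formula: the variational identity $D_{c,h}\ell_c = \int_{S^1}\langle D_s h, D_s c\rangle\,ds = -\int_{S^1}\langle h, D_s^2 c\rangle\,ds$ (integration by parts), which gives $\on{grad}^{G^\id}_c\la = -\Ph'(\ell_c)\,D_s^2 c$, and the dilation derivative $D_{c,c}\la = \Ph'(\ell_c)\,D_{c,c}\ell_c = \Ph'(\ell_c)\,\ell_c$, because $D_{c,c}\ell_c = \int\langle D_s c, D_s c\rangle\,ds = \int 1\,ds = \ell_c$. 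Consequently $3\la_c + D_{c,c}\la = 3\Ph(\ell_c) + \Ph'(\ell_c)\,\ell_c$, and by Lemma~\ref{prop:scale_invariance_of_conformal_Liouville} this vanishes identically precisely when $\Ph(\ell) = C\ell^{-3}$, which separates cases \ref{cor:hamVF_Gphi_casea} and \ref{cor:hamVF_Gphi_caseb} exactly as stated.

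For part \ref{cor:hamVF_Gphi_casea}, i.e. $\Ph(\ell) \ne C\ell^{-3}$, I would invoke Proposition~\ref{prop:hamVF_conformal}\ref{prop:hamVF_conformal_casea} directly with $\la = \Ph(\ell_c)$. Substituting $\on{grad}^{G^\id}_c\la = -\Ph'(\ell_c)\,D_s^2 c$ and $3\la_c + D_{c,c}\la = 3\Ph(\ell_c) + \Ph'(\ell_c)\ell_c$ into the displayed formula of that proposition, the term $\langle \on{grad}^{G^\id}_c\la,\, D_s c\x\on{grad}^{G^\id}H\rangle_{L^2}\,D_s c\x(D_s c\x c)$ becomes $-\Ph'(\ell_c)\langle D_s^2 c,\, D_s c\x\on{grad}^{G^\id}H\rangle_{L^2}\,D_s c\x(D_s c\x c)$, and the term $-\langle c,\on{grad}^{G^\id}H\rangle_{L^2}\,D_s c\x\on{grad}^{G^\id}_c\la$ becomes $+\Ph'(\ell_c)\langle c,\on{grad}^{G^\id}H\rangle_{L^2}\,D_s c\x D_s^2 c$. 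Pulling the common factor $\Ph'(\ell_c)/(3\Ph(\ell_c)+\Ph'(\ell_c)\ell_c)$ out front, these match the two bracketed terms in \eqref{eq:ham_flow_length_weight}, and the leading term $-\tfrac1{3\la_c}D_s c\x\on{grad}^{G^\id}H$ is already $-\tfrac1{3\Ph(\ell_c)}D_s c\x\on{grad}^{G^\id}H$. A small bookkeeping point is to double-check the overall sign convention: the statement has a global $+\tfrac1{3\Ph(\ell_c)}$ with a $-D_s c\x\on{grad}^{G^\id}H$ inside, whereas the proposition has $-\tfrac1{3\la_c}\{D_s c\x\on{grad}^{G^\id}H + \dots\}$, so I would verify the bracketed terms' signs flip consistently; this is the only place a sign slip could occur, so it is the step I'd treat most carefully.

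For part \ref{cor:hamVF_Gphi_caseb}, i.e. $\Ph(\ell) = C\ell^{-3}$, the hypothesis $3\la_c + D_{c,c}\la = 0$ of Proposition~\ref{prop:hamVF_conformal}\ref{prop:hamVF_conformal_caseb} holds by the computation above, and $\Th^{C\ell^{-3}}$ is scale-invariant (equivalently $\L_I\Th^{C\ell^{-3}}=0$), as noted after Corollary~\ref{cor:length_symp}. I would check that the invariance hypotheses on $H$ transcribe correctly: Proposition~\ref{prop:hamVF_conformal}\ref{prop:hamVF_conformal_caseb} asks for $H$ invariant under $\Diffp$, the scaling flow of $I$, and the flow of $\on{hgrad}^{\OmMW}\la = -D_s c\x\on{grad}^{G^\id}\la$; here $\on{grad}^{G^\id}\la = -\Ph'(\ell_c)D_s^2 c = 3C\ell_c^{-4}D_s^2 c$, so $\on{hgrad}^{\OmMW}\la = -3C\ell_c^{-4}\,D_s c\x D_s^2 c$ is a positive-function multiple of $\on{hgrad}^{\OmMW}\ell = D_s c\x D_s^2 c$ (using Remark~\ref{rmk:horizontal_MW_field} and $\on{grad}^{G^\id}\ell = -D_s^2 c$), hence spans the same line and the same one-dimensional foliation — so the invariance requirement in the corollary (under the flow of $\on{hgrad}^{\OmMW}\ell$) is equivalent. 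Finally, the candidate field from the proposition, $X^H_c = \tfrac1{\la_c}\on{hgrad}^{\Om^\id}_c H = -\tfrac1{3\la_c}D_s c\x\on{grad}^{G^\id}H = -\tfrac{\ell_c^3}{3C}D_s c\x\on{grad}^{G^\id}H$, is exactly the stated $X^H_c$, and the conclusion that $\on{hgrad}^{\Om^\la}_c H$ is its $G^\id_c$-orthogonal projection off the kernel of $\Om^\la$ — spanned by $I$, $\on{hgrad}^{\OmMW}\ell$ (equivalently $\on{hgrad}^{\OmMW}\la$), and $\{a.D_s c\}$ — is inherited verbatim. The only obstacle anywhere in the proof is the sign/normalization bookkeeping in part \ref{cor:hamVF_Gphi_casea}; everything else is direct substitution into results already established.
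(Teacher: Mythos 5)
Your strategy is exactly the paper's: the printed proof of this corollary is a one--line reduction to Proposition \ref{prop:hamVF_conformal} via $\la=\Ph(\ell_c)$, $\on{grad}^{G^\id}\Ph(\ell_c)=-\Ph'(\ell_c)D_s^2c$ and $D_{c,c}\la=\Ph'(\ell_c)\ell_c$, and your part \ref{cor:hamVF_Gphi_caseb} (including the observation that $\on{hgrad}^{\OmMW}(C\ell^{-3})$ spans the same line as $\on{hgrad}^{\OmMW}\ell$, so the invariance hypotheses transcribe) is fine.

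The one step you explicitly defer --- ``verify the bracketed terms' signs flip consistently'' --- is not a formality, and you should not assert that the terms ``match'' before doing it. Carrying the substitution through: the first bracketed term of Proposition \ref{prop:hamVF_conformal}\ref{prop:hamVF_conformal_casea} picks up a minus sign from $\on{grad}^{G^\id}\la=-\Ph'D_s^2c$, which then cancels against the overall $-\tfrac{1}{3\la_c}$, reproducing $+\langle D_s^2c, D_sc\x\on{grad}^{G^\id}H\rangle\,D_sc\x(D_sc\x c)$ as in \eqref{eq:ham_flow_length_weight}; but the second term comes out as $-\tfrac{\Ph'}{3\Ph+\Ph'\ell}\langle c,\on{grad}^{G^\id}H\rangle\,D_sc\x D_s^2c$ inside the $\tfrac{1}{3\Ph}\{\cdots\}$, i.e.\ with the \emph{opposite} sign to the printed corollary. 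So the proposition and the printed corollary are inconsistent in that one term. A direct check settles which is right: for $H=f\circ\ell$ the ansatz $X=\al\,D_sc\x D_s^2c$ in the defining equation $\Om^{\Ph(\ell)}(X,k)=dH(k)$ gives $\Th^\id(X)=\al\ell$, $d\la(X)=0$, and hence $\al(3\Ph+\ell\Ph')=f'$, i.e.\ $\al=f'/(3\Ph+\ell\Ph')=\tfrac{f'}{3\Ph}\bigl(1-\tfrac{\Ph'\ell}{3\Ph+\Ph'\ell}\bigr)$, which agrees with the sign produced by substituting into the proposition and not with the $+$ sign in \eqref{eq:ham_flow_length_weight} (nor with the $1+\tfrac{\Ph'\ell}{3\Ph+\Ph'\ell}$ coefficient in the length--function example downstream). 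So finish the bookkeeping, conclude that the correct formula carries $-\langle c,\on{grad}^{G^\id}H\rangle_{L^2_{ds}(S^1)}D_sc\x D_s^2c$, and flag the sign in the printed statement as a typo rather than forcing your derivation to match it.
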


\begin{proof}
    The stated formula follows from Proposition \ref{prop:hamVF_conformal} with $\on{grad}_c^{G^\id}\Phi(\rev{\ell(c)})=-\Phi'(\rev{\ell(c)})D_s^2 c$ and that $\on{hgrad}^{\OmMW}C\ell^{p}$ is a constant multiple of $\on{hgrad}^{\OmMW}\ell$.
\end{proof}

\begin{remark}
From the above Proposition it follows \(\on{hgrad}^{\Om^{\Phi(\ell)}}H\) agrees with \(\on{hgrad}^{\Om^{\on{id}}}H\) up to a constant scaling if $\Phi'(\rev{\ell(c)})=0$.
If  \(\Phi'(\rev{\ell(c)})\neq 0\) and \(\langle D_s^2 c, \on{grad}^{\Om^{\on{id}}}H\rangle_{L^2_{ds}(S^1)}\neq 0\) then it is, however, genuinely different, i.e., it does not seem realizable as a Hamiltonian vector field for the Marsden-Weinstein form \(\OmMW\). To formally prove that a given vector field $X_H$ is never attained by the Marsden-Weinstein structure one needs to show that $\mathcal{L}_{X_H}\OmMW \neq 0$. Using the closeness of $\OmMW$ and Cartan's formula, this can be reduced to show that $di_{X_H}\OmMW\neq 0$. However the necessary computations for this turn out to become extremely cumbersome and not very insightful. We refrain from providing them here. 
\end{remark}
Next we will consider several explicit examples, that will further highlight the statement of the above remark. We acknowledge that many of the Hamiltonian functions we consider were studied for the Marsden-Weinstein structure in~\cite{chern_knöppel_pedit_pinkall_2020}.

\begin{example}[Length function]
  We start with the arguably simplest Hamiltonian, namely  we assume that $H$ is a function of the total length $\ell$, i.e., \(H(c)=f\circ\ell(c)\) for some function \(f\). In this case we calculate:
\begin{align}
    dH_c (k)= d [f\circ\ell] _c(k) = D_{c,k}f(\rev{\ell(c)}) = f'(\rev{\ell(c)})\int \langle D_sk, D_sc\rangle ds=- f'(\rev{\ell(c)})\int \langle D^2_s c, k\rangle ds,
\end{align}
hence 
\begin{align}
    \on{grad}_c^{G^\id} H=-f'(\rev{\ell(c)})D^2_s c.
\end{align}
Using Corollary~\ref{cor:hamVF_Gphi}, we thus have
\begin{align}
    \on{hgrad}_c^{\Om^{\Phi(\ell)}} H = \frac{f'(\rev{\ell(c)})}{3\Phi(\rev{\ell(c)})}\left(1+ \frac{\Phi'(\rev{\ell(c)})\rev{\ell(c)}}{3\Phi(\rev{\ell(c)})+\Phi'(\rev{\ell(c)})\rev{\ell(c)}} \right)D_s c \x D^2_s c.
\end{align}
 If $f'(\rev{\ell(c)})=0$ for the initial length of the curve $\rev{\ell(c)}$, it is a zero vector field. If  $f'(\rev{\ell(c)})\neq 0$, then the length $\rev{\ell(c)}$ is conserved along the flow as $H=f\circ \ell$ is conserved. Note that  $\on{hgrad}^{\Om^{\Phi(\ell)}} H $ is a constant multiple of the binormal equation (also known as the vortex filament equation), 
\begin{align} \label{eq:binormal_equation}
    \on{hgrad}_c^{\OmMW}\ell=D_s c \x D^2_s c
\end{align}
using the Marsden-Weinstein symplectic structure.

 Thus we have seen that the Hamiltonian vector field of the the symplectic structure $\Om^{\Phi(\ell)}$ is a constant multiple of the Hamiltonian vector field of the Marsden-Weinstein symplectic structure. Note, that this constant factor, i.e., \emph{the relative speed} with respect to the standard binormal equation, depends on the initial length $\rev{\ell(c)}$.
\end{example}

\begin{example}[Flux of a divergence-free vector field on $\R^3$ though a Seifert surface]\label{eg:flux}
Our next examples of Hamiltonians are the fluxes of vector fields through Seifert surfaces. We consider
 for any divergence-free vector field \(V\in\Gamma(T \R^3)\) the closed 2-form \(\xi_V\coloneqq i_V( dx\wedge dy\wedge dz)\). 
 We can then define the corresponding \emph{flux} by 
\begin{align*}
E_V \coloneqq\int_{D^2} \langle V\circ \Sigma, n\rangle= \int_{\Sigma(D^2)} \xi_V
\end{align*}
where $\Sigma\colon D^2 \to \R^3$ is a smooth Seifert surface, i.e., an oriented and connected surface with $\Sigma\mid_{\partial D^2}=c$, and $n$ is the unit surface normal. 

We remark that $E_V$ is independent of the choice of $\Sigma$. 
To see this, first notice that there is a unique 1-form \(\alpha_V\) (up to addition of an exact 1-form) such that \(d\alpha_V = \xi_V\)  as  $H^1_{dR}(\R^3)=0$ and $H^2_{dR}(\R^3)=0$. By Stokes theorem we have,
\begin{align*}
    \int_{\Sigma(D^2)} \xi_V= \int_{D^2}  \Sigma^*d\alpha_V 
=\int_{\Sigma(\partial D^2)}\alpha_V
=\int_{c(S^1)} \alpha_V
\end{align*}
where $\Sigma^*$ denotes the pullback by $\Sigma$. 
For \(E_V\), we have the following formulas from~\cite[Theorem 4]{chern_knöppel_pedit_pinkall_2020}:
\begin{align*}
    &\on{grad}^{G^\id} E_V = D_s c\times(V\circ c),\\
    &\on{hgrad}^{\OmMW} E_V= V\circ c.
\end{align*}
We consider $E_V$ for two specific choices of \(V\), where we use an analogous notation as in~\cite{chern_knöppel_pedit_pinkall_2020}: the translation \(V_{-1}=v\) by some \(v\in\R^3\) and the rotation \(V_{-2}(x)=v\times x\) with some unit \(v\in\R^3\) and we denote the corresponding fluxes by  \(H_{-1}=E_{V_{-1}}\) and \(H_{-2}=E_{V_{-2}}\). 
Next we compute the horizontal Hamiltonian vector fields. From the computation
\begin{align}
    &\langle D^2_s c, D_s c\x ( D_s c \x v) \rangle_{L^2(ds)}=0,\\
    &\langle c, D_s c \x v \rangle_{L^2(ds)}
    =\int_{S^1} \langle D_s c, v\x c \rangle ds = \int_{S^1} \langle D_s c, 2 \on{curl}(v)\circ c\rangle ds =  2H_{-1}(c),
\end{align}
and 
\begin{align}
    &\langle D^2_s c, D_s c\x ( D_s c \x (v\x c)) \rangle_{L^2(ds)}=0,\\
    &\langle c, D_s c \x (v\x c) \rangle_{L^2(ds)}
    =\int_{S^1} \langle D_s c, (v\x c)\x c \rangle ds = \int_{S^1} \langle D_s c, 3 \on{curl}(v\x x)\circ c\rangle ds =  3H_{-2}(c),
\end{align}
     we obtain for $i\in \{-1, -2\}$,
 \begin{align}\label{eq:hgrad_flux}
    \on{hgrad}^{\Om^{\Phi(\ell)}} H_{i}
     &= \frac{w_i}{3\Phi(\rev{\ell(c)})}+\frac{C_i H_{i}(c)\Phi'(\rev{\ell(c)})}{3\Phi(\rev{\ell(c)})(3\Phi(\rev{\ell(c)})+\Ph'(\rev{\ell(c)})\rev{\ell(c)})}D_s c\times D_s^2 c\\
     &= \frac{1}{3\Phi(\rev{\ell(c)})} \on{hgrad}^{\OmMW} H_{i}  + \frac{C_i H_{i}(c)\Phi'(\rev{\ell(c)})}{3\Phi(\rev{\ell(c)})(3\Phi(\rev{\ell(c)})+\Ph'(\rev{\ell(c)})\rev{\ell(c)})}  \on{hgrad}^{\OmMW} \ell
 \end{align}
where $w_{-1}=v, w_{-2}=v\x c$  and $C_{-1}=2, C_{-2}=3$ respectively.

Since all of the three quantities $\ell$, $H_{-1}$, and $ H_{-2}$ are constants in motion along the fields $\on{hgrad}^{\Om^{\on{id}}}H_i$ and $\on{hgrad}^{\Om^{\on{id}}}\ell$ \cite[Corollary 1]{chern_knöppel_pedit_pinkall_2020}, the coefficients of  both terms in \eqref{eq:hgrad_flux}  do not change along $\on{hgrad}^{\Om^{\Phi(\ell)}} H_i$. Hence the Hamiltonian fields $\on{hgrad}^{\Om^{\Phi(\ell)}} H_i$ are weighted sums of the Marsden-Weinstein Hamiltonian fields of \(\ell\) and \(H_{-1}\) (or \(H_{-2}\) respectively). 
\end{example}

\begin{example}[Squared curvature]\label{ex:squaredcurv}
    We next compute the Hamiltonian vector field for the squared curvature 
    \begin{align}
        H(c)\coloneqq \frac{1}{2}\int \ka^2 ds.
    \end{align}
    We have according to \cite{chern_knöppel_pedit_pinkall_2020},
    \begin{align}
        \on{grad}^{G^\id} H=D_s \left(D^3_s c+\frac{3}{2}\ka^2 D_s c \right),
        \quad D_s c \x \on{grad}^{G^\id} H 
        = D_s c\x D_s^4 c+\frac{3}{2}\ka^2 D_s c \x D_s^2 c.
    \end{align}
    Then, from
    \begin{align}
         &\langle D_s^2 c, D_s c\x \on{grad}^{G^\id} H\rangle_{L^2_{ds}(S^1)} = \langle D_s^2 c, D_s c\x D_s^4 c \rangle_{L^2_{ds}(S^1)} +0=0,\\
         &\langle  c, \on{grad}^{G^\id} H\rangle_{L^2_{ds}(S^1)}=\int \ka^2-\frac{3}{2}\ka^2 ds =-H(c),
    \end{align}
    we have
\begin{align}
    \on{hgrad}^{\Om^{\Phi(\ell)}} H
    &=\frac{1}{3\Phi(\rev{\ell(c)})} \left\{ - D_s c\x D_s^4 c-\frac{3}{2}\ka^2 D_s c \x D_s^2 c
      -\frac{H\Phi'(\rev{\ell(c)})}{3\Phi(\rev{\ell(c)})+\Ph'(\rev{\ell(c)})\rev{\ell(c)}} 
     D_s c\x D^2_s c \right\}
     \\&=\frac{1}{3\Phi(\rev{\ell(c)})} \left\{ \on{hgrad}^{\OmMW} H
      -\frac{H\Phi'(\rev{\ell(c)})}{3\Phi(\rev{\ell(c)})+\Ph'(\rev{\ell(c)})\rev{\ell(c)}} 
     \on{hgrad}^{\OmMW} \ell\right\}.
\end{align}
    Since both $H$ and $\ell$ are again constants in motion along both $\on{hgrad}^{\OmMW} \ell$ and $\on{hgrad}^{\OmMW} H$ \cite{chern_knöppel_pedit_pinkall_2020}, $\on{hgrad}^{\Om^{\Phi(\ell)}} H$ is also realized as a Hamiltonian vector field of $\Om^{\on{id}}$.
\end{example}

\begin{example}[Total torsion]\label{ex:torsion}
    We next consider the total torsion
    \begin{align}
        H(c)\coloneqq \int \tau ds.
    \end{align}
    Using the results \cite[Theorem 2]{chern_knöppel_pedit_pinkall_2020}
    \begin{align}
    \on{grad}^{G^\id}H&= -D_s c\x D_s^3 c,\\
    D_s c\x \on{grad}^{G^\id}H&=-D_s c\x (D_s c\x D_s^3 c),
    \end{align}
    we compute
    \begin{align}
    \langle D_s^2 c, D_s c\x \on{grad}^{G^\id} H\rangle_{L^2_{ds}(S^1)}&= -\frac{1}{2}\int D_s\ka^2 ds=  0\\
      \langle c, \on{grad}^{G^\id} H\rangle_{L^2_{ds}(S^1)}
     &= \langle D_s c, D_s c \x D_s^2 c\rangle_{L^2_{ds}(S^1)}+\langle c, D_s^2 c \x D_s^2 c\rangle_{L^2_{ds}(S^1)}= 0+0.
\end{align}
Then we get
\begin{align}
    \on{hgrad}^{\Om^{\Phi(\ell)}} H
    &=\frac{1}{3\Phi(\rev{\ell(c)})} D_s c\x (D_s c\x D_s^3 c)
     =\frac{1}{3\Phi(\rev{\ell(c)})}\on{hgrad}^{\OmMW}H 
     ,
\end{align}
which is a scaled version of the Marsden-Weinstein gradient flow.
\end{example}

\begin{example}[Squared scale]\label{eg:squaredScale}
Next we consider the squared scale
\begin{align}\label{eq:squared_scale}
    E(c)\coloneqq \frac12 \int |c|^2 ds, 
\end{align}
as a Hamiltonian function. This is seen as the total kinetic energy of a moving particle in a periodic orbit in $\R^3$.

We first get by a direct computation that,
\begin{align}
    \on{grad}^{G^\id}E&=c - \langle c,D_s c\rangle D_s c -  \frac{1}{2}|c|^2 D_s^2 c=(1-\on{pr}_c)c  -  \frac{1}{2}|c|^2 D_s^2 c,\\
    D_s c\x \on{grad}^{G^\id}E&=D_s c \x c -  \frac{1}{2}|c|^2 D_s c \x D_s^2 c,
\end{align}
and
\begin{align}
    \langle D_s^2 c, D_s c\x \on{grad}^{G^\id} E\rangle_{L^2_{ds}(S^1)}&= -\Th^\id_c(D_s^2 c),\\
      \langle  c, \on{grad}^{G^\id} E\rangle_{L^2_{ds}(S^1)}
     &= \| D_s c \x c\|_{L^2_{ds}(S^1)}^2 -\frac{1}{2}\langle c, |c|^2 D_s^2 c\rangle _{L^2_{ds}(S^1)}
     \\&=\| D_s c \x c\|_{L^2_{ds}(S^1)}^2 +E(c).
\end{align}
Using them with Corollary ~\ref{cor:hamVF_Gphi} gives us;
   \begin{align}\label{eq:hgrad_squaredScale}
         \on{hgrad}^{\Om^{\Phi(\ell)}} E
        = \frac{1}{3\Phi(\rev{\ell(c)})} \Big\{ - D_s c \times c &+ \frac{1}{2}|c|^2 D_s c \x D_s^2 c\\ 
        +\frac{\Phi'(\rev{\ell(c)})}{3\Phi(\rev{\ell(c)})+\Ph'(\rev{\ell(c)})\rev{\ell(c)}} &\Big[ -\Th^\id_c(D_s^2 c) D_s c\times (D_s c \x c)\\ 
        &-  \left(\|D_s c \x c\|^2 _{L^2_{ds}(S^1)}-\frac{1}{2}\langle c,|c|^2 D_s^2 c\rangle _{L^2_{ds}(S^1)} \right)  D_s c\times D_s^2 c\Big]\Big\}\\
        = \frac{1}{3\Phi(\rev{\ell(c)})} \Big\{ - D_s c \times c &+ \frac{1}{2}|c|^2 D_s c \x D_s^2 c\\ 
        +\frac{\Phi'(\rev{\ell(c)})}{3\Phi(\rev{\ell(c)})+\Ph'(\rev{\ell(c)})\rev{\ell(c)}} &\Big[ \Th^\id_c(D_s^2 c) (1-\on{pr}_c)c\\ 
        & -\left(\|D_s c \x c\|^2 _{L^2_{ds}(S^1)}+E(c) \right)  D_s c\times D_s^2 c\Big]\Big\}.
    \end{align}
    
\end{example}

\begin{example}[Product of length and total squared curvature]
\label{eg:length_squared_curvature}
Our last example is the Hamiltonian given by
\begin{align}
    H(c)=\rev{\ell(c)} K(c)
\end{align}
where $K(c)=\int_{S^1} \ka^2 ds$ is the total squared curvature.  This somewhat unusual Hamiltonian is the only one among our examples that satisfies the condition required in Corollary \ref{cor:hamVF_Gphi} \ref{cor:hamVF_Gphi_caseb} the scale-invariant case. That is, $H$ is invariant under the both flows of $I=c$ and $Y\coloneqq \on{hgrad}^{\OmMW}\ell=D_s c\x D_s^2 c$.
To see this, let us compute
\begin{align}
    \L_{Y}H
    =K\L_Y \ell+\ell \L_Y K
    =K\cdot 0+\ell \cdot 0 =0
\end{align}
as $\ell$ is the Hamiltonian of $Y$ and the last equality follows from a direct computation using \eqref{eq:varka2}. 
This shows the existence of a Hamiltonian vector field horizontal in the sense of Corollary \ref{cor:hamVF_Gphi} \ref{cor:hamVF_Gphi_caseb}.
\end{example}
\begin{question}
    We know from the above examples that some vector fields are realized as Hamiltonian vector fields of \emph{both} $\bar\Om^{\on{MW}}$ and $\bar\Om^{\Phi(\ell)}$. We still do not know whether the spaces of all Hamiltonian vector fields generated by these two symplectic structures coincide, or if one is contained in the other. More generally, the coverage of Hamiltonian vector fields of $\bar\Om^{L}$ for a given operator $L$ is an independent question, which we have not investigated in this article. 
\end{question}

\section{Presymplectic structures induced by curvature weighted Riemannian metrics}\label{sec:curvature_weighted}
In this section we will consider the special case of symplectic structures, that are induced by curvature weighted metrics, i.e., we consider the Riemannian metric
\begin{equation}
G^{1+\ka^2}_c(h,k)=\int_{S^1}(1+\ka^2)\langle h,k\rangle ds,
\end{equation}
where $\ka=\ka_c$ denotes the curvature of the curve $c$. Note, that in the notation of the previous sections, this metric corresponds to the $G^L$ metric with $L=1+\ka^2$. This metric, which is sometimes also called the Michor-Mumford metric, has been originally introduced in~\cite{michor_mumford2006} to overcome the vanishing distance phenomenon of $L^2$-metric, see also \cite{michor2005vanishing}.

\begin{remark}[Relations to the Frenet-Serret formulas] Given $c\in \Imm(S^1,\mathbb R^3)$ we consider the open subset 
$U =\{ \ka>0\} = \{D_s^2c\ne 0\}\subset S^1$. Note that $\ka = 0$ on the boundary $\overline U\setminus U$, and is also 0 on the open complement $S^1\setminus \overline U$ which is a union of at most countably many open intervals in $S^1$; on each of these intervals $c$ is straight line segment since $D_sc$ is constant there. So we may assume that the torsion $\ta$ is defined and 0 on $S^1\setminus U$. 
On $U$
the moving frame and the Frenet-Serret fomulas are given by 
\begin{align}
T&= D_sc,\quad N=\ka^{-1}D_s^2c,\quad B = T\x N = \ka^{-1}D_sc\x D_s^2c
\\
D_s T &= \ka.N = D_s^2 c,  
\\
D_s N &=-D_s \ka. \ka^{-2}.D_s^2c + \ka^{-1}D_s^3c =  -\ka.T + \ta.B = -\ka.D_sc + \ta.\ka^{-1}D_sc\x D_s^2c 
\\
D_sB &= -D_s \ka. \ka^{-2}.D_sc\x D_s^2c  =  -\ta.N = -\ta.\ka^{-1}D_s^2c
\end{align}
This implies the following which are valid on the whole of $S^1$ since both sides vanish on $S^1\setminus U$: 
\begin{align}
D_s^3c &= \langle D_s^3c,T\rangle T + \langle D_s^3c,N\rangle N  
+\langle D_s^3c, B\rangle B\quad\text{ valid on }U
\\&
=\langle D_s^3c,D_sc\rangle D_sc + \ka^{-2}\langle D_s^3c,D_s^2c\rangle D_s^2c  
+\ka^{-2}\langle D_s^3c, D_sc\x D_s^2c\rangle D_sc\x D_s^2c\quad\text{ on }S^1
\\&
= -\ka^2 D_sc + D_s \ka. \ka^{-1}.D_s^2c + \ta.D_sc\x D_s^2c \quad\text{ valid on  }U\text{ but extends smoothly to }S^1
\\
\implies &  \langle D_s^3c,D_sc\rangle = -\ka^2, \quad \langle D_s^3c,D_s^2c\rangle = D_s\ka.\ka,
\quad \langle D_s^3c, D_sc\x D_s^2c\rangle = \ta.\ka^2 \quad\text{ valid on }S^1 \label{eq:frenet}
\\
\ta & = \ka^{-2} \langle D_s^3c, D_sc\x D_s^2c\rangle \qquad\text{ valid on }S^1\,.
\end{align}

\end{remark}

\begin{remark}
Similarly to Remark~\ref{rem:momentum_general} we obtain again conserved quantities and corresponding momentum mappings. Here we want to specifically highlight the momentum map   $J^{SO(3)}$: as an element of $\R^3\approx \mathfrak{so}^*(3)$, the angular momentum $J^{SO(3)}$ is given by
\begin{equation}
 \langle J^{SO(3)}(c), Y\rangle = \int (1+\ka^2)\langle c\x D_sc, Y\o c\rangle ds, 
\end{equation}
which can be understood as the angular momentum of a thickened curve where the thickness (or mass) at each point is a function of $1+\ka^2$. Note, that this is in stark contrast to the previous section, i.e., the length weighted case, where the angular momentum for $\Om^{\Phi(\ell)}$ is just the $\Phi(\ell)$-scaled version of the angular momentum for $\Om^{\id}=3\OmMW$. 
\end{remark}

We have the following result concerning the induced presymplectic structure: 
\begin{theorem}[The presymplectic structure $\Om^{1+\ka^2}$]\label{kappa-non-deg}
The induced (pre)symplectic structure of the $G^{1+\ka^2}$-metric is given by:
\begin{equation}\label{eq:Om_ka}
\begin{aligned} 
&\Om^{1+\ka^2}_c(h,k)
      =\int 3(1+\ka^2) \langle D_s c, h \x k\rangle +(D_s \ka^2) \langle c,h \x k\rangle+
4 \ka^2\langle D_sh,D_sc\rangle\langle c\x D_s c, k \rangle\\&\qquad-2\langle D_s^2h, D_s^2c\rangle\langle c\x D_s c, k \rangle
    -4 \ka^2\langle D_sk,D_sc\rangle\langle c\x D_s c,h\rangle + 2\langle D_s^2k, D_s^2c\rangle\langle c\x D_s c,h\rangle ds,
\end{aligned}
\end{equation}
and  the vertical vectors $\{a.D_s c \mid a\in C^\infty(S^1)\}\subset T_c \Imm$ is in the kernel.


\end{theorem}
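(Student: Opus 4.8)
The plan is to obtain the formula \eqref{eq:Om_ka} by specializing the general expression \eqref{eq:Omega_L} of \autoref{lem:formula_sympl} to the pointwise multiplication operator $L_c=(1+\ka_c^2)\operatorname{id}$; the only nonroutine ingredient is the first variation of the squared curvature. Since $|D_sc|\equiv 1$ along arclength we may write $\ka_c^2=\langle D_s^2c,D_s^2c\rangle$, and starting from the variational identities recalled in the proof of \autoref{lem:formula_sympl} — in particular $D_{c,h}D_s=-\langle D_sh,D_sc\rangle D_s$ — I would compute in succession
\begin{align*}
D_{c,h}(D_sc)&=D_sh-\langle D_sh,D_sc\rangle D_sc,\\
D_{c,h}(D_s^2c)&=D_s^2h-2\langle D_sh,D_sc\rangle D_s^2c-D_s\big(\langle D_sh,D_sc\rangle\big)\,D_sc,
\end{align*}
and then, using $\langle D_sc,D_s^2c\rangle=0$ to discard the term along $D_sc$,
\begin{align*}
D_{c,h}\ka_c^2=2\langle D_s^2c,D_{c,h}(D_s^2c)\rangle=2\langle D_s^2h,D_s^2c\rangle-4\ka_c^2\langle D_sh,D_sc\rangle .
\end{align*}

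Next I would substitute $L_ch=(1+\ka_c^2)h$ and $(D_{c,h}L_c)k=(D_{c,h}\ka_c^2)\,k$ into \eqref{eq:Omega_L}. The first two cross-product terms collapse to $2(1+\ka_c^2)\langle D_sc,h\x k\rangle-(1+\ka_c^2)\langle c,D_s(h\x k)\rangle$; one integration by parts moving $D_s$ from $h\x k$ onto $(1+\ka_c^2)c$ turns this into $3(1+\ka_c^2)\langle D_sc,h\x k\rangle+(D_s\ka_c^2)\langle c,h\x k\rangle$, the first two terms of \eqref{eq:Om_ka}. The remaining term of \eqref{eq:Omega_L} is $(D_{c,k}\ka_c^2)\langle c\x D_sc,h\rangle-(D_{c,h}\ka_c^2)\langle c\x D_sc,k\rangle$, and inserting the displayed expression for $D_{c,h}\ka_c^2$ yields exactly the four remaining terms of \eqref{eq:Om_ka}. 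All of this is bookkeeping; the main (and essentially the only) obstacle is getting the variation of $\ka_c^2$ right, including the cancellation furnished by $\langle D_sc,D_s^2c\rangle=0$.

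For the kernel statement the cleanest route is to invoke \autoref{th:descending_presymplectic_form}: for every $a\in C^\infty(S^1)$ one has $L_c(a\,c')=(1+\ka_c^2)\,a\,c'=\big((1+\ka_c^2)a|c'|\big)D_sc\in\on{span}\{c,c'\}$, so the hypothesis of that theorem holds and $\Om^{1+\ka^2}=\pi^*\bar\Om^{1+\ka^2}$ descends to $B_i(S^1,\R^3)$; hence $i_{a.D_sc}\Om^{1+\ka^2}=\pi^*\big(i_{T\pi(a.D_sc)}\bar\Om^{1+\ka^2}\big)=0$ since $a.D_sc$ is vertical. Alternatively one can check this by hand from \eqref{eq:Om_ka} with $h=a.D_sc$: the terms containing $\langle D_sc,D_sc\x k\rangle$ and $\langle c\x D_sc,D_sc\rangle$ drop out at once, and the survivors cancel after using $\langle D_s(aD_sc),D_sc\rangle=D_sa$, $\langle D_s^2(aD_sc),D_s^2c\rangle=2\ka_c^2D_sa+\tfrac12a\,D_s\ka_c^2$, and the Frenet identity $\langle D_s^3c,D_s^2c\rangle=\tfrac12D_s\ka_c^2$ from \eqref{eq:frenet}. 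Non-degeneracy of the descended form is not claimed and, as announced in the introduction, is left for future work.
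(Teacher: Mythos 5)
Your proposal is correct and follows essentially the same route as the paper: the same variation formula $D_{c,h}\ka_c^2=2\langle D_s^2h,D_s^2c\rangle-4\ka_c^2\langle D_sh,D_sc\rangle$, the same substitution into \eqref{eq:Omega_L} with one integration by parts producing the $3(1+\ka_c^2)$ and $D_s\ka_c^2$ terms, and the same appeal to \autoref{th:descending_presymplectic_form} (with the direct check as an alternative) for the kernel statement. The only cosmetic difference is that the paper splits $\Om^{1+\ka^2}=\Om^{\id}+\Om^{\ka^2}$ by linearity before computing, while you carry $1+\ka_c^2$ through in one piece.
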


\begin{proof}[Proof of Theorem~\ref{kappa-non-deg}.]
To calculate the formula for $\Omega^{1+\ka^2}$ we first need the variation of $\ka^2=\langle D_s^2 c, D_s^2 c\rangle$. Using, that $D_{c,h}D_s = -\langle D_s h, D_s c\rangle D_s$, cf. the proof of Lemma~\ref{lem:formula_sympl},
we calculate:
\begin{align}
    D_{c,h} (D_s^2 c) &= (D_{c,h}D_s).D_sc + D_s\left((D_{c,h}D_s)c\right) + D_s^2 h
    \\&
    =-\langle D_sh,D_sc\rangle.D_s^2c - D_s\left(\langle D_sh, D_sc\rangle D_sc\right) + D_s^2h\\
    &=-\langle D_sh,D_sc\rangle.D_s^2c - \left(D_s\langle D_sh, D_sc\rangle\right) D_sc -\langle D_sh, D_sc\rangle D^2_sc + D_s^2h\\
     &=-2\langle D_sh,D_sc\rangle.D_s^2c - \left(D_s\langle D_sh, D_sc\rangle\right) D_sc + D_s^2h
\end{align}
Thus we obtain
\begin{equation}\label{eq:varka2}
    D_{c,h}\ka^2 = -4\langle D_sh,D_sc\rangle \ka^2 - 0 + 2\langle D_s^2h, D_s^2c\rangle\,.
   \end{equation}
Next we note that 
\begin{align*}
    \Om^{1+\ka^2}_c(h,k) = \Om^{\id}_c(h,k)+\Om^{\kappa^2}_c(h,k)
\end{align*}
as the operation $L_c\mapsto \Th_c ^L$ is linear in $L_c$.
Using \eqref{eq:Omega_L}, we then calculate
\begin{equation}
\begin{aligned}\label{eq:presymplectic_curvature}
\Om^{\kappa^2}_c(h,k) 
    &=\int\langle D_s c, \ka^2 h \x k + h\x  \ka^2 k\rangle 
     - \langle c, D_s h \x  \ka^2 k - D_s k \x  \ka^2 h \rangle 
    \\&\qquad\qquad
     - \langle c\x D_s c, (D_{c,h} \ka^2)k - (D_{c,k} \ka^2)h\rangle ds,
     \\&=\int 2 \kappa^2 \langle D_s c, h \x k\rangle 
     - \ka^2\langle c, D_s h \x   k  \rangle - \langle \ka^2 c, h \x  D_s k\rangle 
     \\& \qquad- D_{c,h} \ka^2 \langle c\x D_s c, k \rangle +D_{c,k} \ka^2\langle c\x D_s c,h\rangle ds
     \\& =\int 2\kappa^2 \langle D_s c, h \x k\rangle 
     - \ka^2\langle c, D_s h \x   k  \rangle 
     \\&\qquad+ \langle D_s (\ka^2 c), h \x   k\rangle +\ka^2\langle c, D_s h \x   k  \rangle 
      \\& \qquad- D_{c,h} \ka^2 \langle c\x D_s c, k \rangle  + D_{c,k} \ka^2  \langle c\x D_s c,h\rangle ds
      \\& =\int 3\kappa^2 \langle D_s c, h \x k\rangle + (D_s \ka^2) \langle c,h \x k\rangle
      \\& \qquad- D_{c,h} \ka^2 \langle c\x D_s c, k \rangle  + D_{c,k} \ka^2 \langle c\x D_s c,h\rangle ds.
\end{aligned}
\end{equation}
Hence 
\begin{align}
     \Om^{1+\ka^2}(h,k) 
     & =\int 3(1+\kappa^2) \langle D_s c, h \x k\rangle +(D_s \ka^2) \langle c,h \x k\rangle
      \\& \qquad - D_{c,h} \ka^2 \langle c\x D_s c, k \rangle  + D_{c,k} \ka^2 \langle c\x D_s c,h\rangle ds.
\end{align}
and~\eqref{eq:Om_ka} follows by using the variation formula \eqref{eq:varka2} for $\ka^2$.

That $\Omega$ decends to a form on $B_i(S^1,\mathbb R^3)$ follows again from \autoref{th:descending_presymplectic_form}; alternatively we can also see this directly from the above formula: a straightforward calculation shows that $h=a.D_sc$ is indeed in the kernel of $\Omega_c^{1+\kappa^2}$.
\end{proof}

\begin{question}
It remains open if the presymplectic structure $\bar\Om^{1+\ka^2}$ on $B_i(S^1,\mathbb R^3)$ is non-de\-ge\-nerate and thus symplectic.   Therefore it remains to show that tangent vectors of the form $aD_sc$ are the whole kernel of $\Omega_c^{1+\kappa^2}$. It seems natural to employ a similar strategy as in the previous section for length weighted metrics, i.e., for given $h$ we test with all $k$ of the form $k=ac$ for $a\in C^{\infty}(S^1)$. This leads to reducing the degeneracy of $\Omega^{1+\kappa^2}$ to solving the equation $P_c(a)=f$ for any given $f\in C^{\infty}(S^1)$, where
\begin{align}
P_c(a) &:= 2 \langle  D_s^2c,c\rangle D_s^2a - 4 \langle D_s c, c \rangle \kappa^2. D_s a  + (3+\kappa^2)a;.
\end{align}
The existence of periodic solutions for the above equation is, however, non-trivial. Note, that the coefficient functions are in general degenerate, e.g.,  $\langle  D_s^2c,c\rangle$ can vanish somewhere. 
\end{question}

\begin{question}
    We may consider a more general version. Suppose $L_c\colon h\mapsto f_c. h$ where $f_c$ is a positive function for any $c$ and is of form $f_c(\theta)=\rho(c(\th),D_s c(\th), D_s^2 c(\th),\ldots, D_s^N c(\th))$ with some finite $N$ and a function $\rho\colon \R^{3N}\to \R_{\geq 0}$. We expect that $\bar \Om^L$ is symplectic on $B_i(S^1,\R^3)$ if $\Th^L$ is not scale-invariant, or on $B_i(S^1,\R^3)/\mathcal{F}$ with a 2-dimensional distribution $\mathcal{F}$ if $\Th^L$ is scale-invariant (cf. Theorem \ref{thm:sympl_conf}).
\end{question}

\section{Numerical illustrations}\label{sec:numerics}
In this section we numerically illustrate two Hamiltonian flows with respect to the new symplectic structures introduced in this article. For interested readers, we share video footage of the  simulations shown in Figure \ref{fig:flow_flux} and \ref{fig:flow_squaredScale}; see \href{https://youtu.be/nu09IwRK-tY}{https://youtu.be/nu09IwRK-tY}.

\begin{figure}[htbp]
    \centering
    \vspace{5pt}
Flow of $\on{hgrad}^{\Om^{10\rev{\ell}^{-2}}}H_{-2}$\\ \vspace{5pt}

    \begin{minipage}[t]{0.22\textwidth}
    \begin{center}
    \includegraphics[width=1.0\textwidth,trim={6cm 0cm 14cm 0cm},clip]{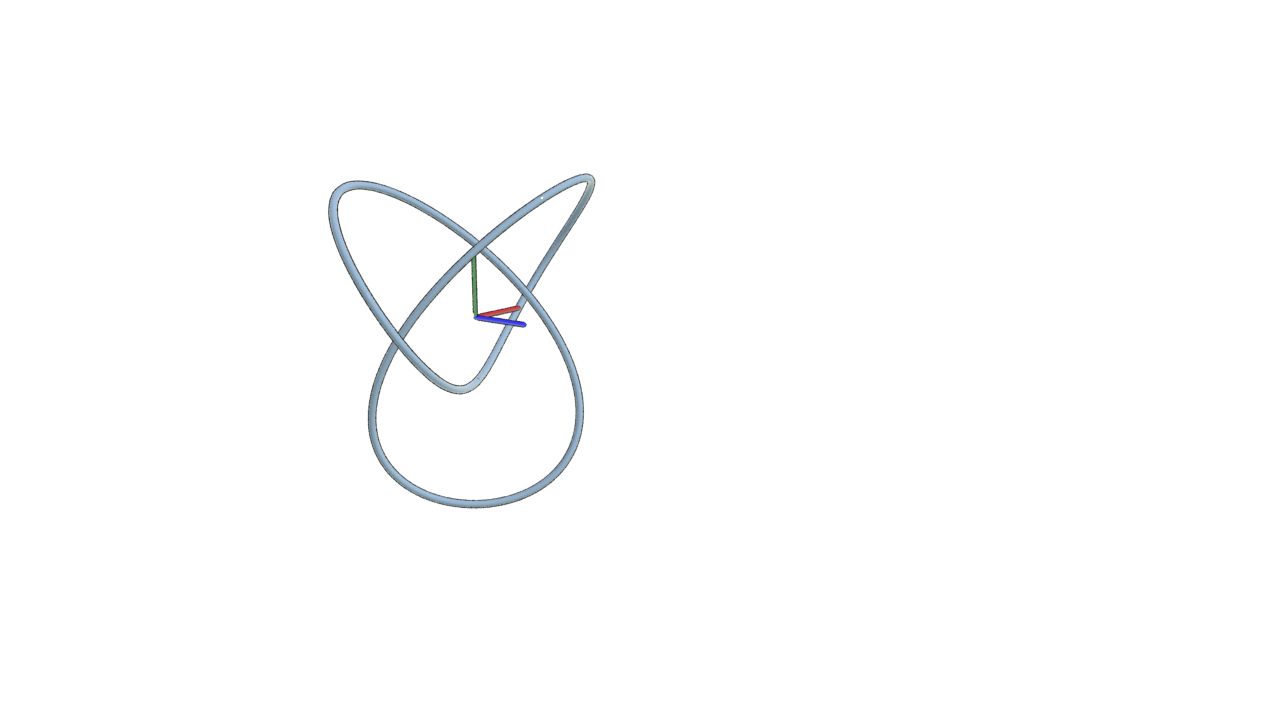}
    \end{center}
    \end{minipage}
    \begin{minipage}[t]{0.22\textwidth}
    \begin{center}
    \includegraphics[width=1.0\textwidth,trim={6cm 0cm 14cm 0cm},clip]{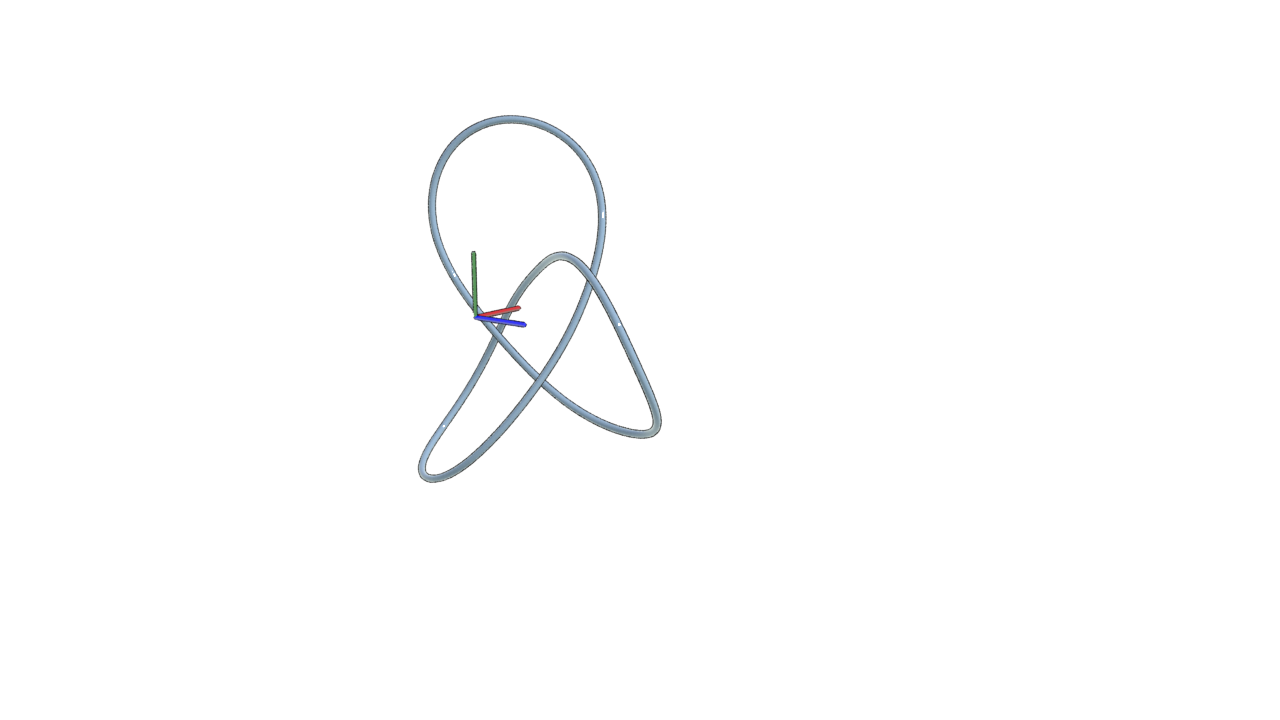}
    \end{center}
    \end{minipage}
    \begin{minipage}[t]{0.22\textwidth}
    \begin{center}
    \includegraphics[width=1.0\textwidth,trim={6cm 0cm 14cm 0cm},clip]{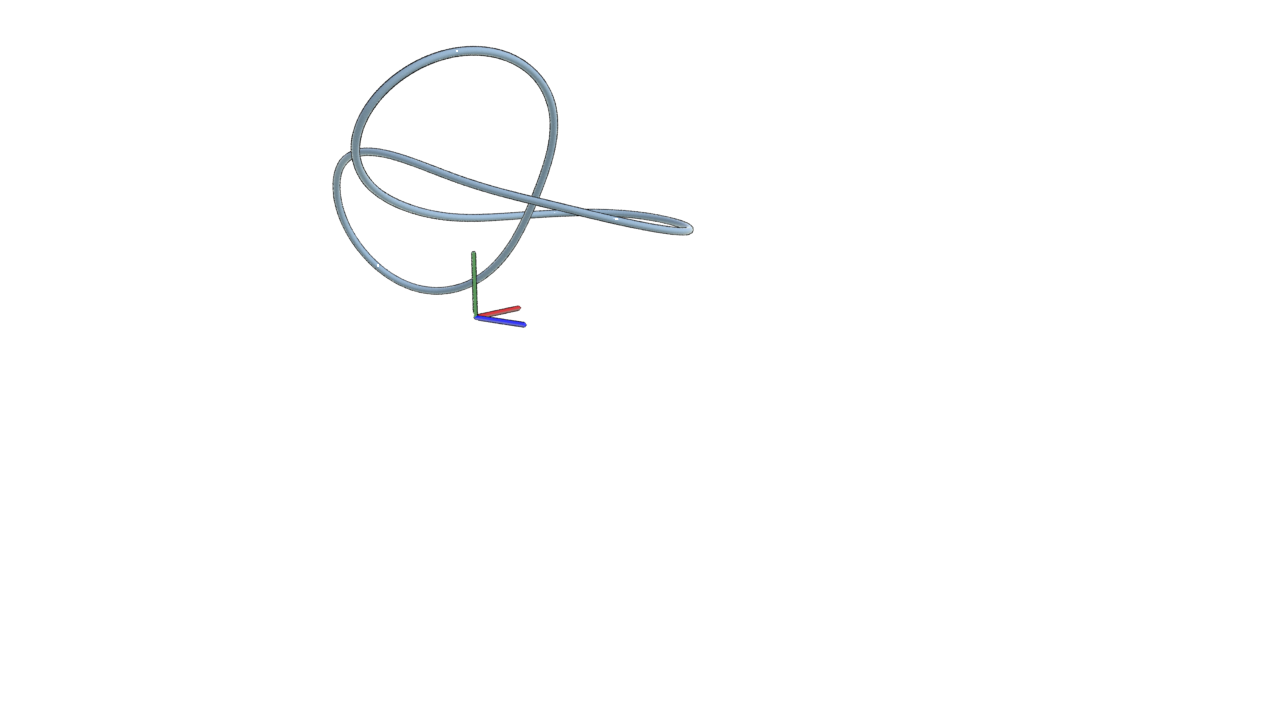}
    \end{center}
    \end{minipage}
    \begin{minipage}[t]{0.22\textwidth}
    \begin{center}
    \includegraphics[width=1.0\textwidth,trim={6cm 0cm 14cm 0cm},clip]{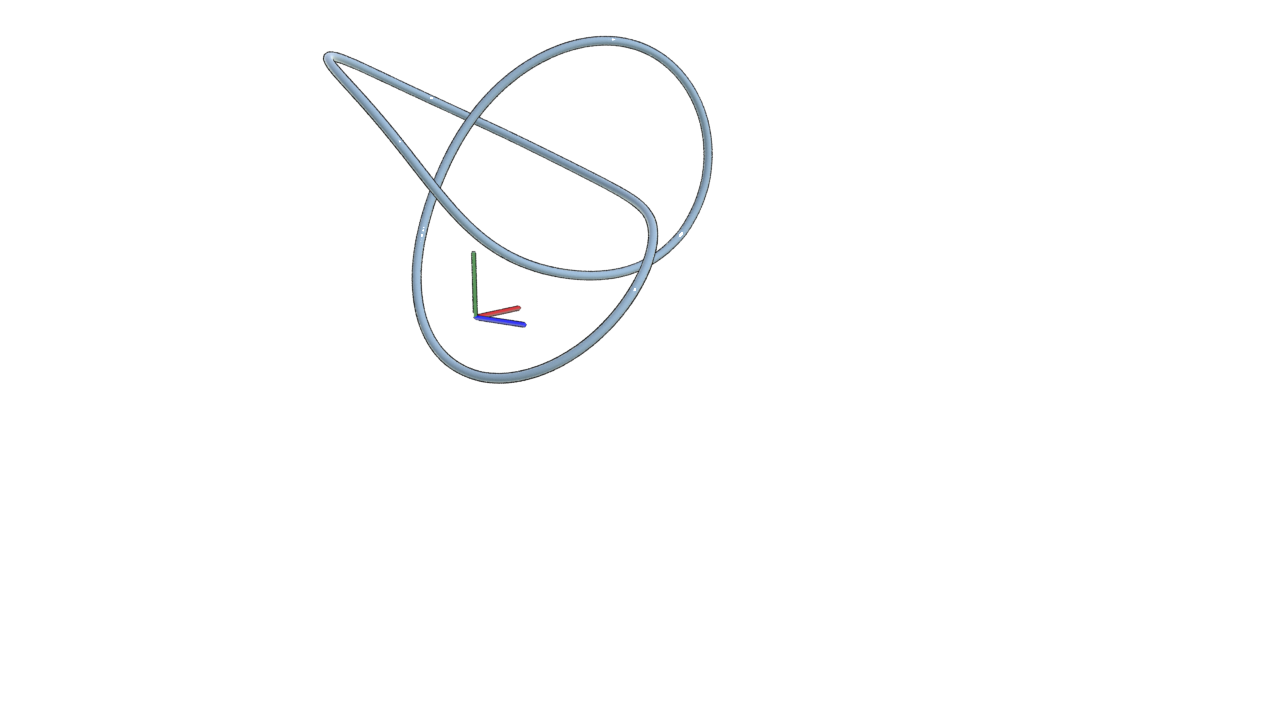}
    \end{center}
    \end{minipage}
    
\vspace{-5pt}
Only the binormal part of $\on{hgrad}^{\Om^{10\rev{\ell}^{-2}}}H_{-2}$\\ \vspace{5pt}
    \begin{minipage}[t]{0.22\textwidth}
    \begin{center}
    \includegraphics[width=1.0\textwidth,trim={6cm 0cm 14cm 0cm},clip]{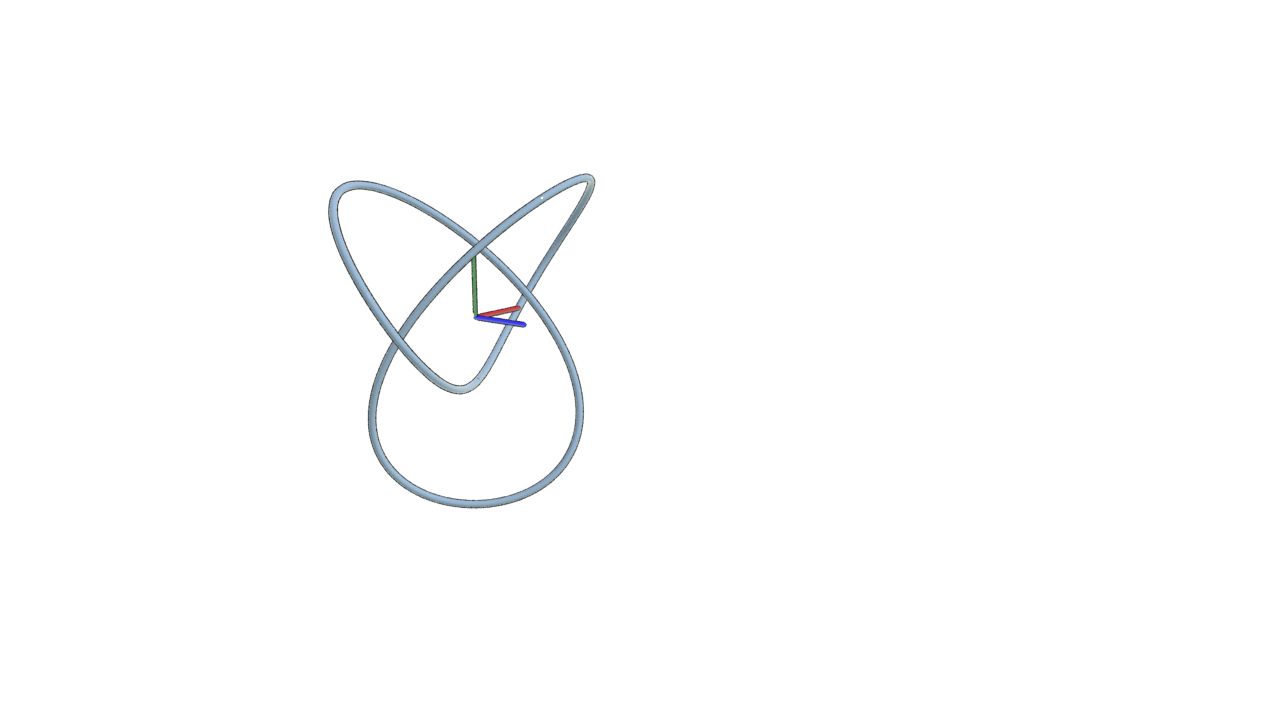}
    \end{center}
    \simtime{0}
    \end{minipage}
    \begin{minipage}[t]{0.22\textwidth}
    \begin{center}
    \includegraphics[width=1.0\textwidth,trim={6cm 0cm 14cm 0cm},clip]{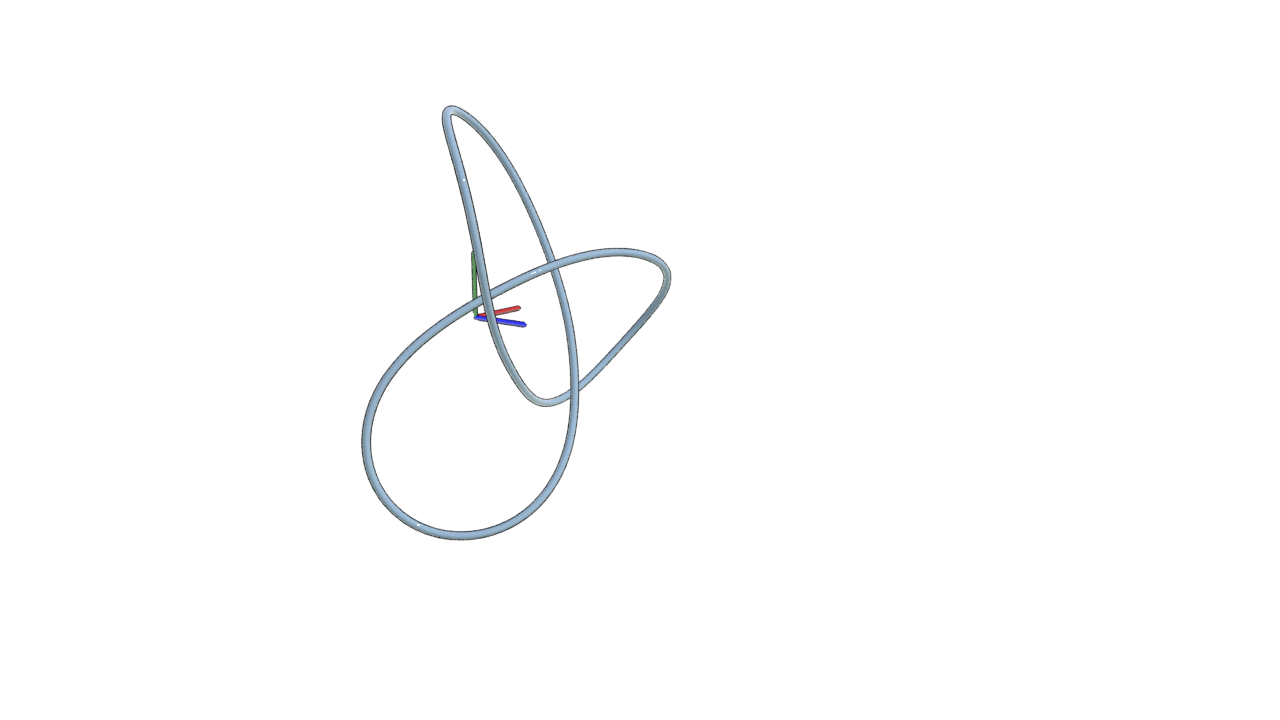}
    \end{center}
    \simtime{15}
    \end{minipage}
    \begin{minipage}[t]{0.22\textwidth}
    \begin{center}
    \includegraphics[width=1.0\textwidth,trim={6cm 0cm 14cm 0cm},clip]{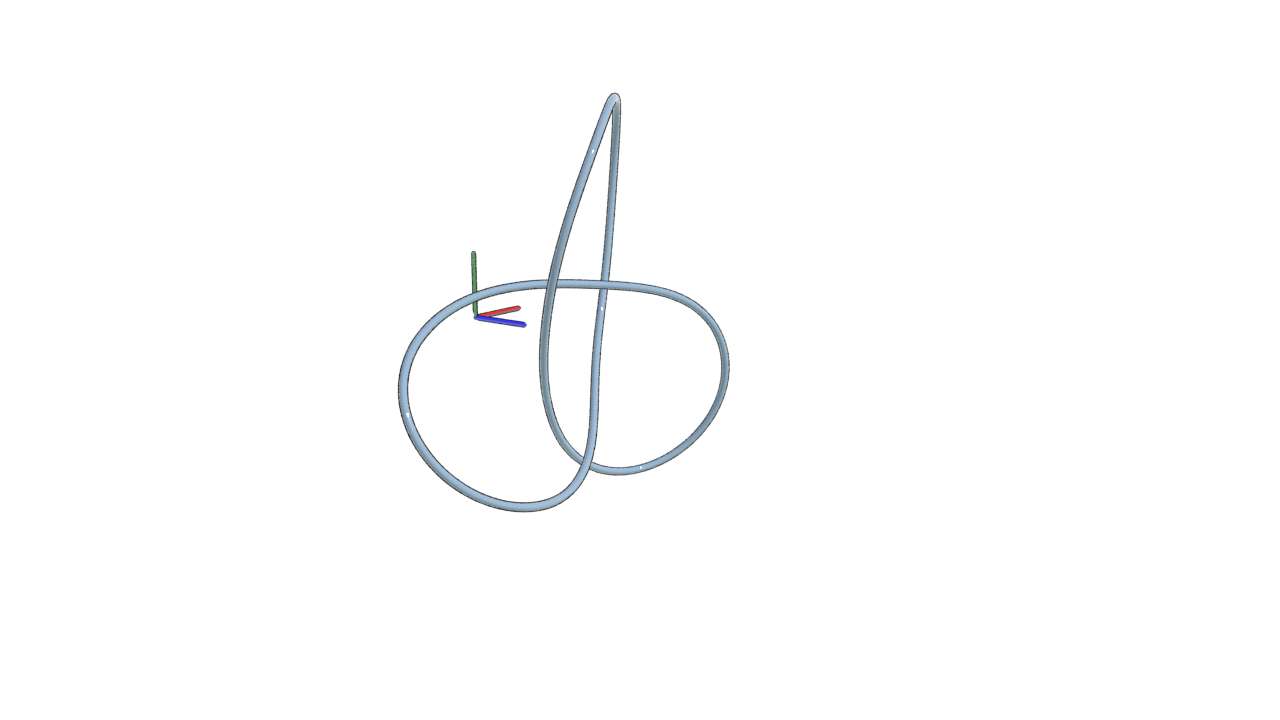}
    \end{center}
    \simtime{30}
    \end{minipage}
    \begin{minipage}[t]{0.22\textwidth}
    \begin{center}
   \includegraphics[width=1.0\textwidth,trim={6cm 0cm 14cm 0cm},clip]{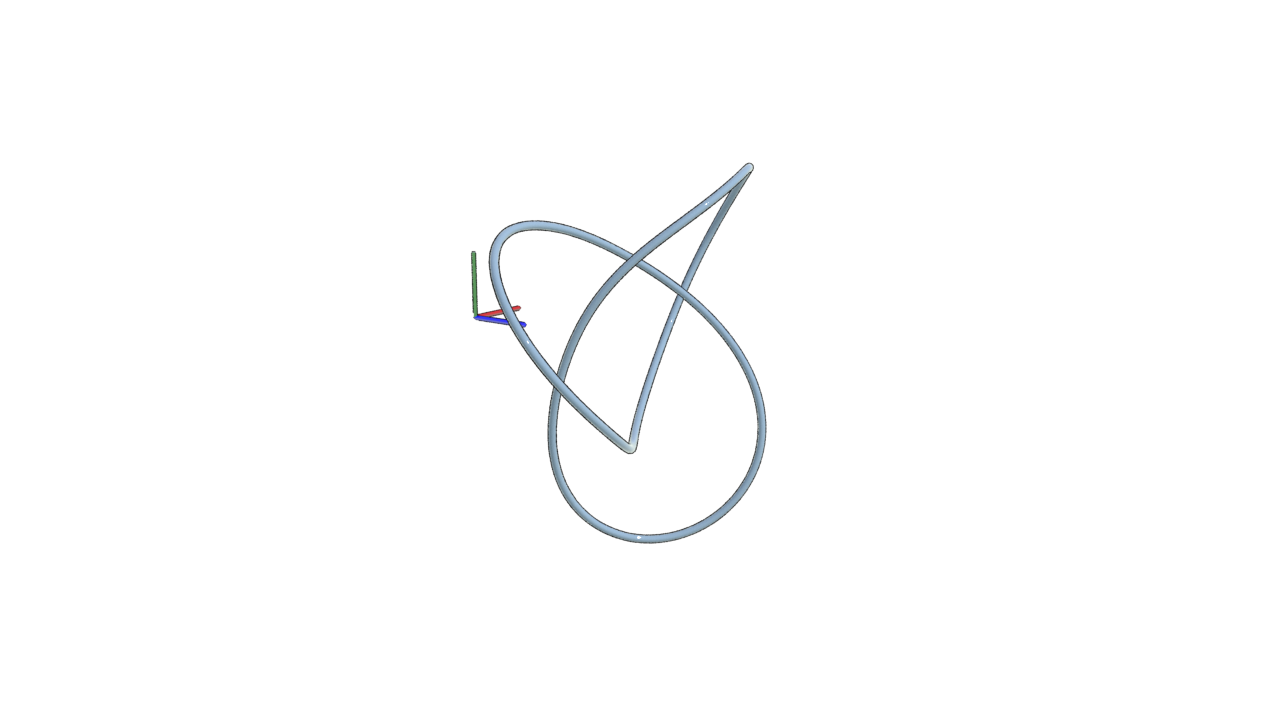}
    \end{center}
    \simtime{45}
    \end{minipage}

    \caption{Hamiltonian flow of $H_{-2}$, the flux of a rotational vector field from Example~\ref{eg:flux} using $\Phi(\rev{\ell(c)})=10\ell(c)^{-2}$ (top), and the flow only with its binormal component (bottom). The red, green, and blue axes are the $x,y,z$ axes respectively. }
    \label{fig:flow_flux}
\end{figure}
For the numerical simulations, we discretized each curve as an ordered sequence of points in $\R^3$. To approximate terms involving spatial derivatives,  such as the binormal vector and the curvature, we follow the methods of discrete differential geometry, see~\cite{bobenko2015DDG}. We then compute the time integration of each Hamiltonian vector field using the explicit Runge-Kutta method of fourth-order in time. We want to emphasize that our numerical examples are only for illustrative purposes and we do not guarantee any correctness of (even short-time) behaviors of the curve dynamics. 

In our experiments, we use length-weighted presymplectic structures $\Om^{\Phi(\ell)}$ (and symplectic structures $\bar\Om^{\Phi(\ell)}$ for unparametrized curves) as derived in Section \ref{sec:symp_length}. That is, we use functions of the form $\Phi(\ell)=C \ell^p$ with some $C>0$ and $p\in \R$. Note that $C$ only works as time-scaling and does not change the orbit under the Hamiltonian flow. This is because in the expression of the field $\on{hgrad}^{\Om^{\Phi(\ell)}} H$, cf. equation~\eqref{eq:ham_flow_length_weight}, the coefficient $C$ appears only in the factor $\frac{1}{3\Phi(\ell)}=\frac{1}{C \ell^p}$ shared by all the terms and the factor $\frac{\Phi'(\ell)}{3\Phi(\ell)+\Phi'(\ell)\ell}=\frac{p}{(3+p)\ell}$ does not depend on $C$. We choose $C$ to run each simulation with a reasonable discrete timestep, but it essentially does not affect the dynamics. 

We simulate two Hamiltonian flows (Example \ref{eg:flux} and \ref{eg:squaredScale}) from Section \ref{sec:symp_length}. These two examples involve only up to second-order spatial derivatives. Simulating other Hamiltonian flows, such as those discussed in Examples~\ref{ex:squaredcurv} and~\ref{ex:torsion} having third or higher-order derivatives is more challenging as one would have to discretize these higher-order derivatives more carefully.

As for the initial curve, we consider the trefoil
\begin{align}\label{eq:trefoil}
    c(\theta)=  
    \left((2 + \cos(2\th))\cos(3\th),(2 + \cos(2\th)) \sin(3\th), \sin(4\th)\right), \quad
\theta \in S^1=\R/2\pi\Z.
\end{align}
in both of our examples.


\begin{example}[Flux of a vector field]
We first simulate the Hamiltonian flow for the Hamiltonian that is defined as the flux of a vector field through a Seifert surface whose boundary is the curve $c$, cf. Example~\ref{eg:flux}. We chose the vector field of a rigid body rotation $V(x)=v\x x$ with the rotation axis $v=\frac{1}{\sqrt{3}}(1,1,1) \in \R^3$. This amounts to the Hamiltonian $H_{-2}$ in Example \ref{eg:flux}. 

The horizontal Hamiltonian field \eqref{eq:hgrad_flux} is a weighted sum of the rotation $\on{hgrad}^{\OmMW}H_{-2}=v\x c$ and the binormal field $\on{hgrad}^{\OmMW}\ell=D_s^2 c\x D_s c$ with time-constant coefficients. Since these two flows are Poisson commutative, we can simulate the flow by evolving the curves under the binormal equation and rotating it at each time, i.e., $c_t=\exp(t_1 \hat v) c_{t_2}^{\on{Binormal}}$ where $\hat v\in \mathfrak{so}(3)$ corresponds $v$ and $t_1, t_2$ are time $t$ weighted by the coefficients in \eqref{eq:hgrad_flux}. Figure~\ref{fig:flow_flux} illustrates our simulation using $\Phi(\ell\rev{(c)})=10\ell\rev{(c)}^{-2}$. 
The top row is the flow of  $\on{hgrad}^{\Om^{\Phi(\ell)}}H_{-2}$ and the bottom row is the flow by only the binormal equation part where the curve moves toward the $z$-direction while showing a rotational motion around the $z$-axis.
\end{example}
\begin{figure}[htbp]
    \centering
\vspace{5pt}
Flow of $\on{hgrad}^{\Phi(\ell)}E$ with $\Phi(\ell)=\frac{1}{20}$\\ \vspace{5pt}

    \begin{minipage}[t]{0.18\textwidth}
    \begin{center}
    \includegraphics[width=1.0\textwidth,trim={10cm 0 10cm 0},clip]{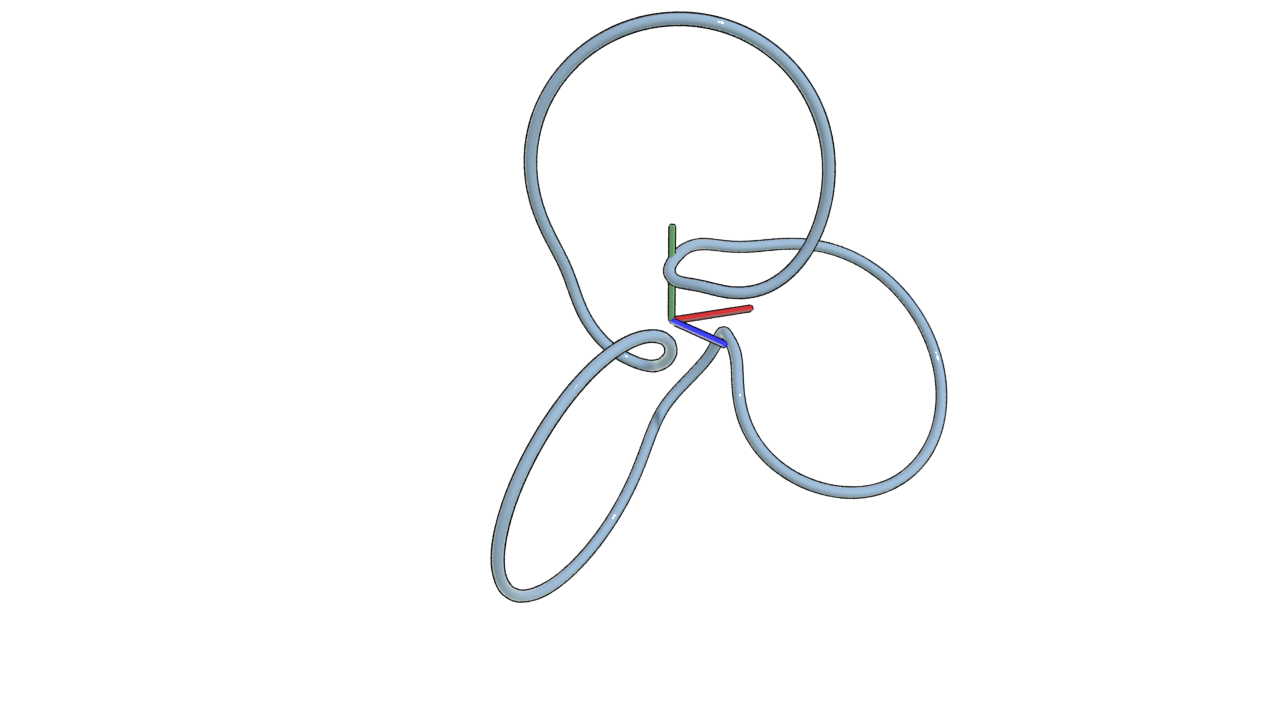}
    \end{center}
    \simtime{6}
    \end{minipage}
    \begin{minipage}[t]{0.18\textwidth}
    \begin{center}
    \includegraphics[width=1.0\textwidth,trim={10cm 0 10cm 0},clip]{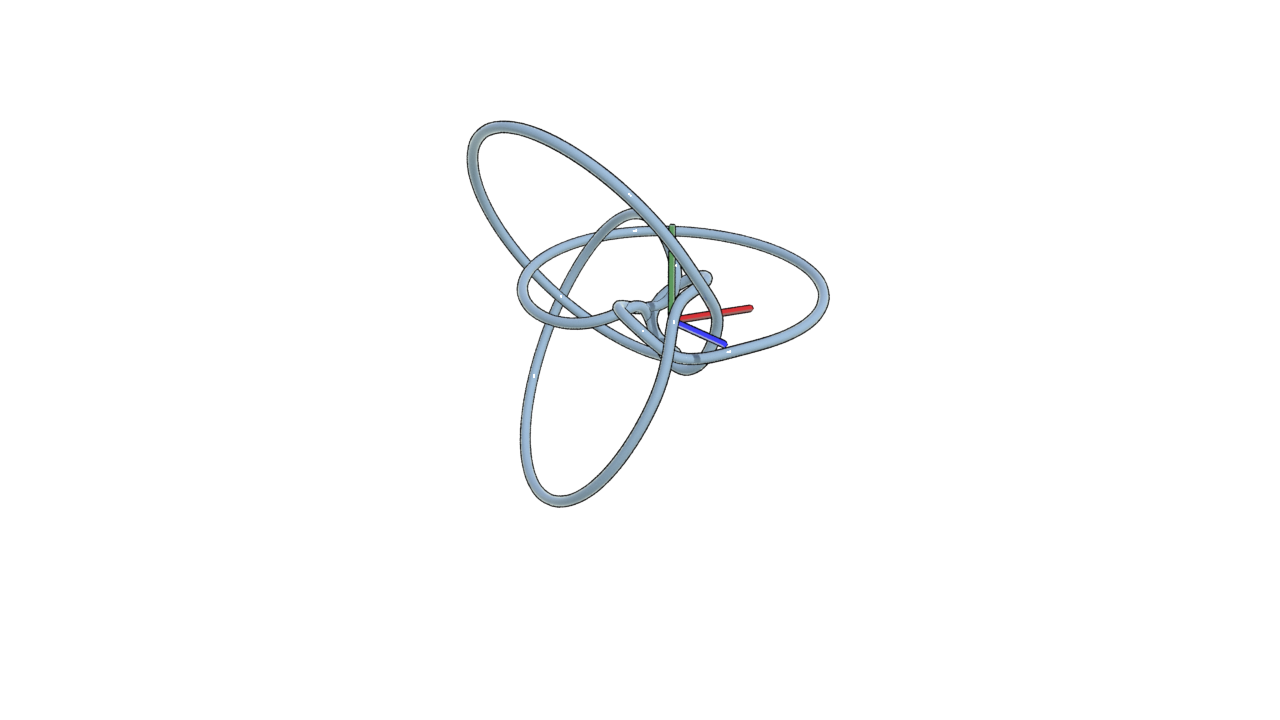}
    \end{center}
    \simtime{10}
    \end{minipage}
    \begin{minipage}[t]{0.18\textwidth}
    \begin{center}
    \includegraphics[width=1.0\textwidth,trim={10cm 0 10cm 0},clip]{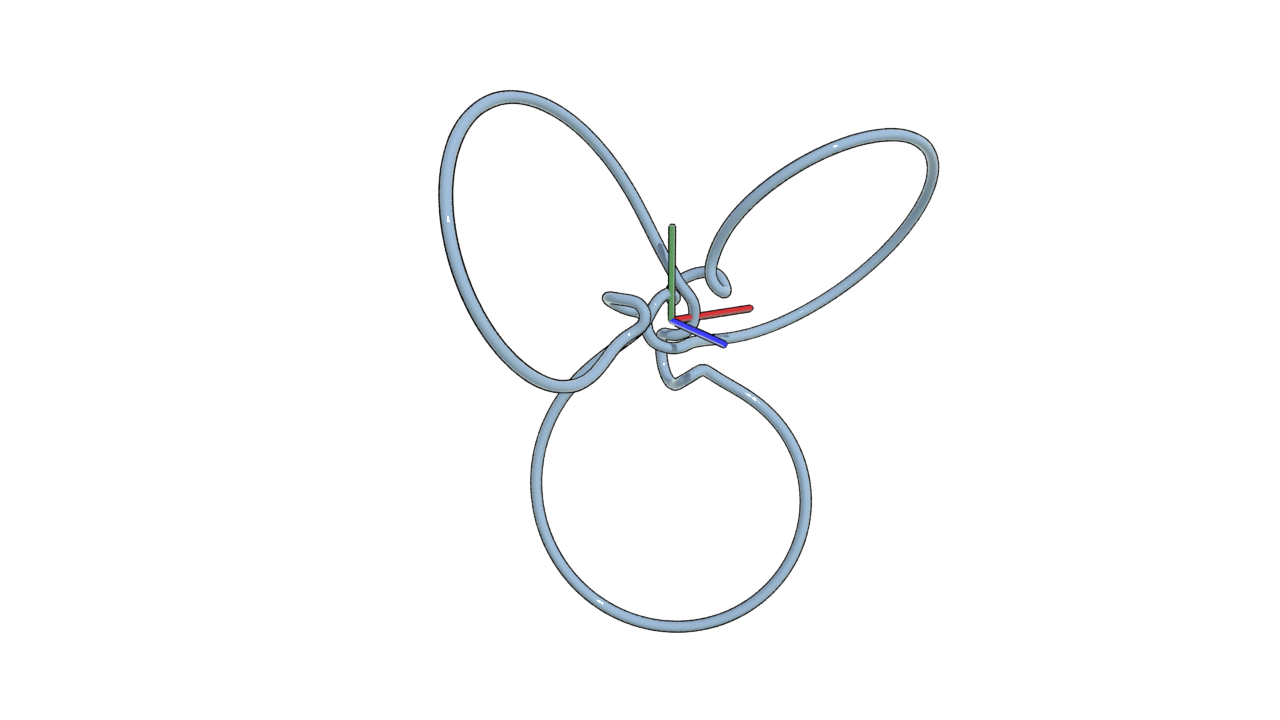}
    \end{center}
    \simtime{14}
    \end{minipage}
    \begin{minipage}[t]{0.18\textwidth}
    \begin{center}
    \includegraphics[width=1.0\textwidth,trim={10cm 0 10cm 0},clip]{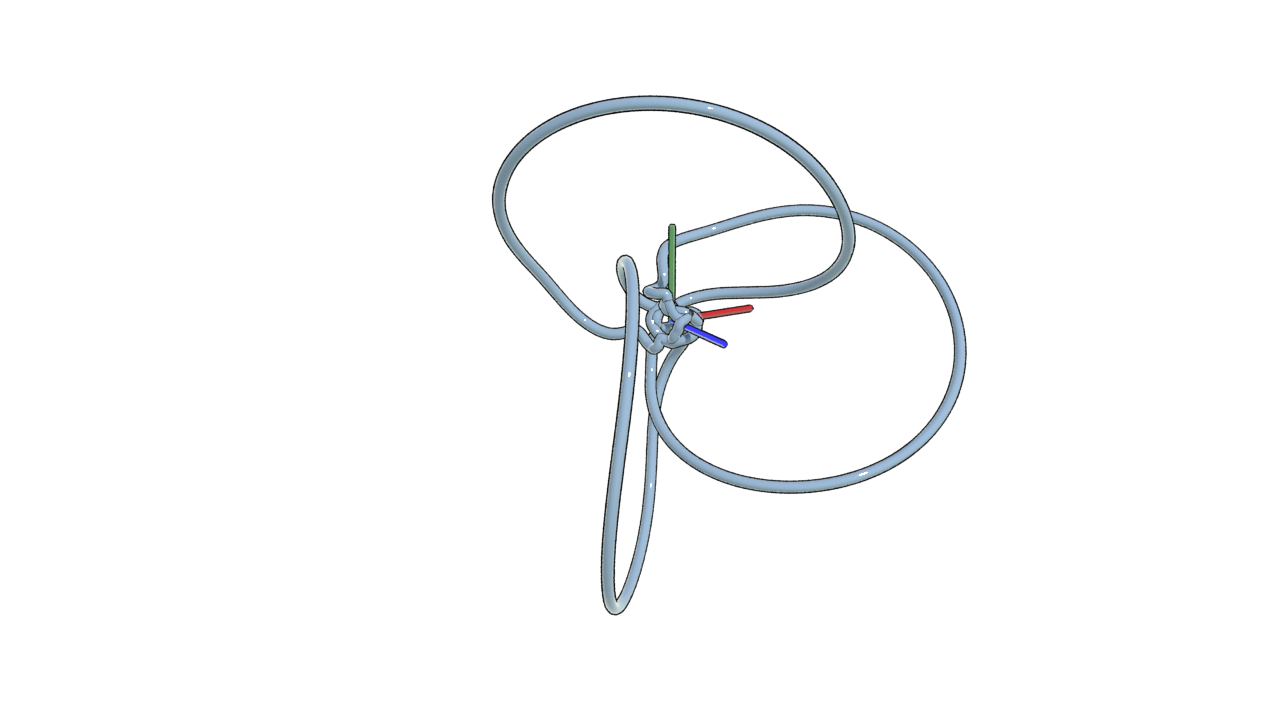}
    \end{center}
    \simtime{19}
    \end{minipage}
    \vrule
    \begin{minipage}[t]{0.17\textwidth}
        \includegraphics[width=1.0\textwidth,trim={10.cm -2cm 10.cm 0cm},clip]{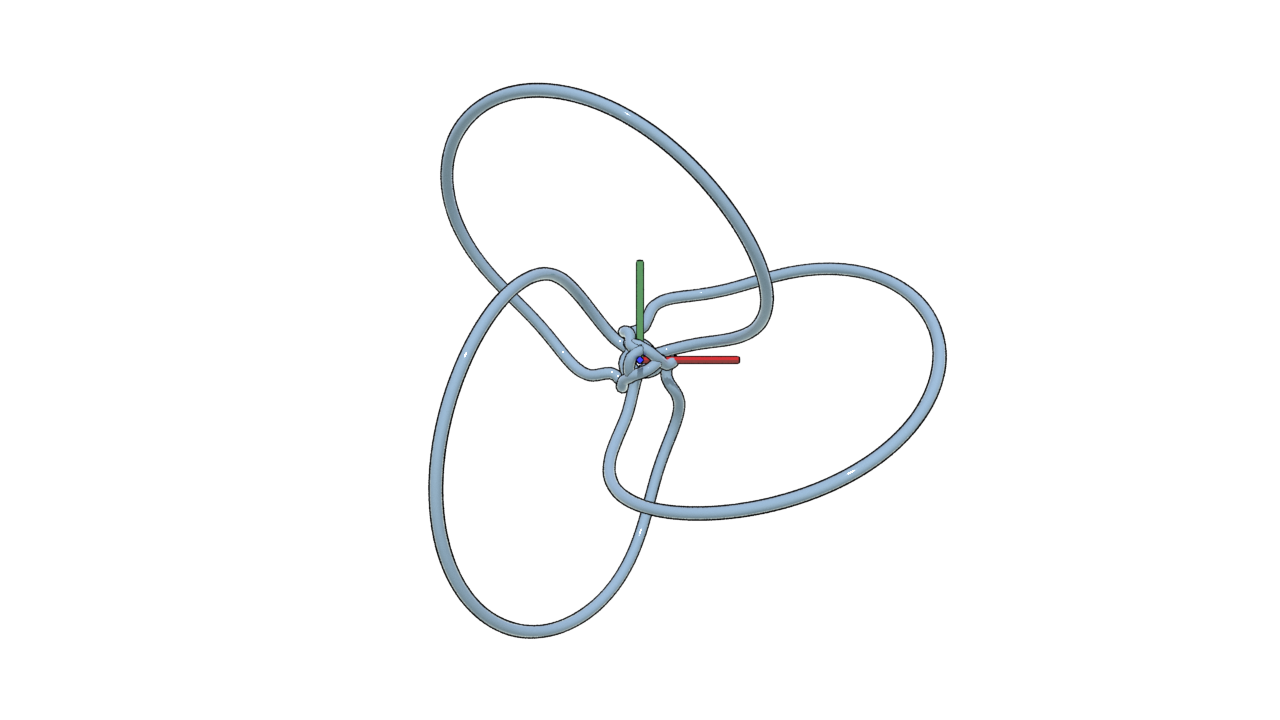}
    \end{minipage}
    \hrule

\vspace{15pt}

Flow of $\on{hgrad}^{\Phi(\ell)}E$ with $\Phi(\ell)=\frac{1}{20}\ell^{-\frac{1}{10}}$\\ \vspace{5pt}
    \begin{minipage}[t]{0.18\textwidth}
        \begin{center}
            \includegraphics[width=1.0\textwidth,trim={10cm 0 10cm 0},clip]{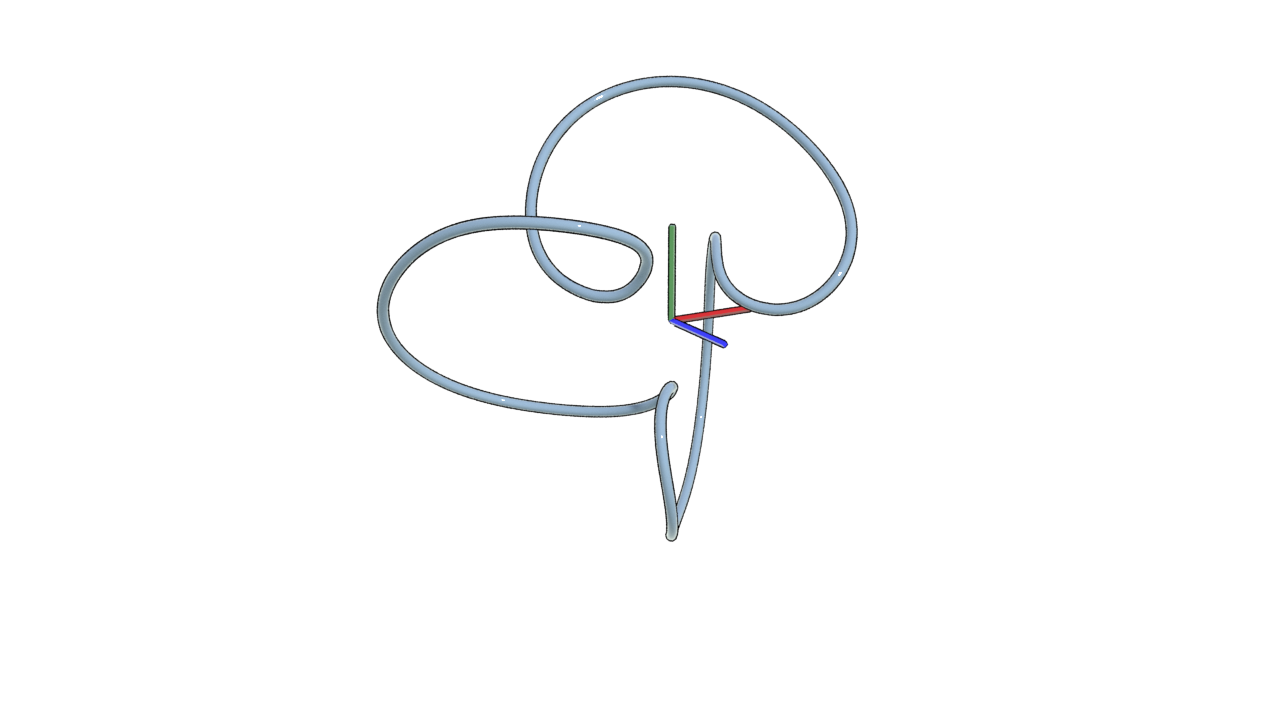}
        \end{center}
         \simtime{6}
        
    \end{minipage}
    \begin{minipage}[t]{0.18\textwidth}
        \begin{center}
        \includegraphics[width=1.0\textwidth,trim={10cm 0 10cm 0},clip]{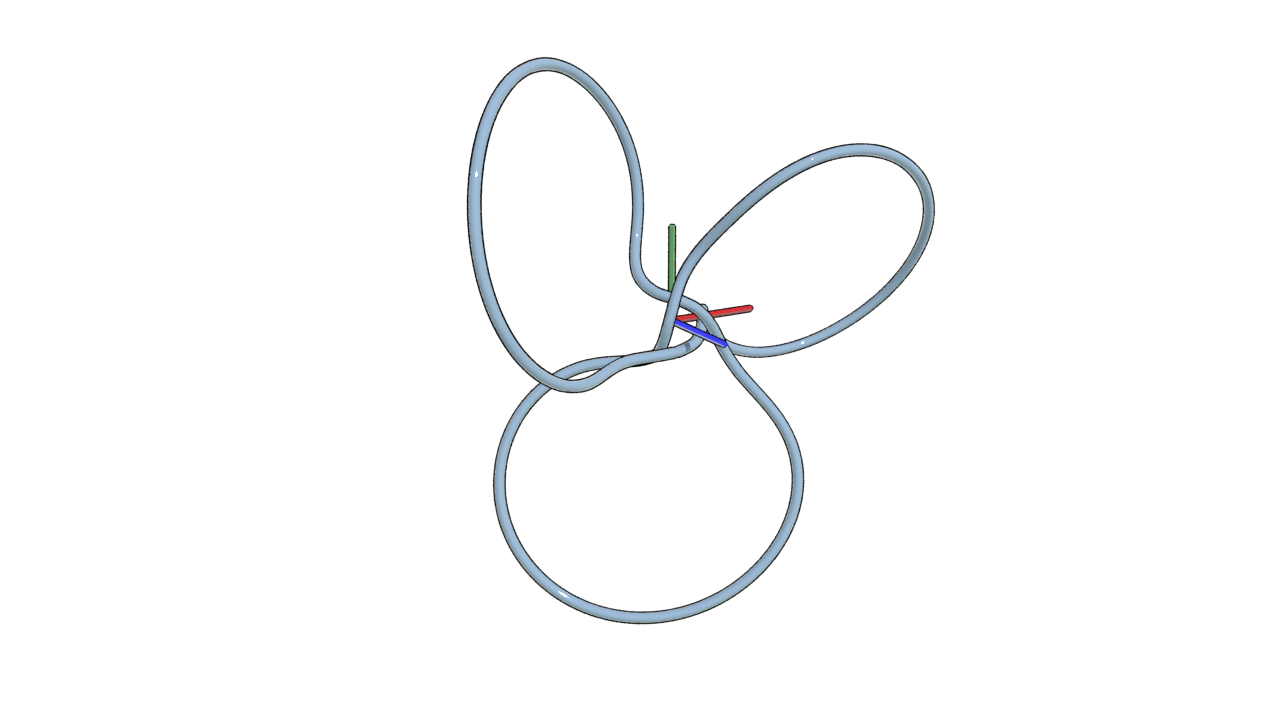}
        \end{center}
        \simtime{10}
    \end{minipage}
    \begin{minipage}[t]{0.18\textwidth}
    \begin{center}
    \includegraphics[width=1.0\textwidth,trim={10cm 0 10cm 0},clip]{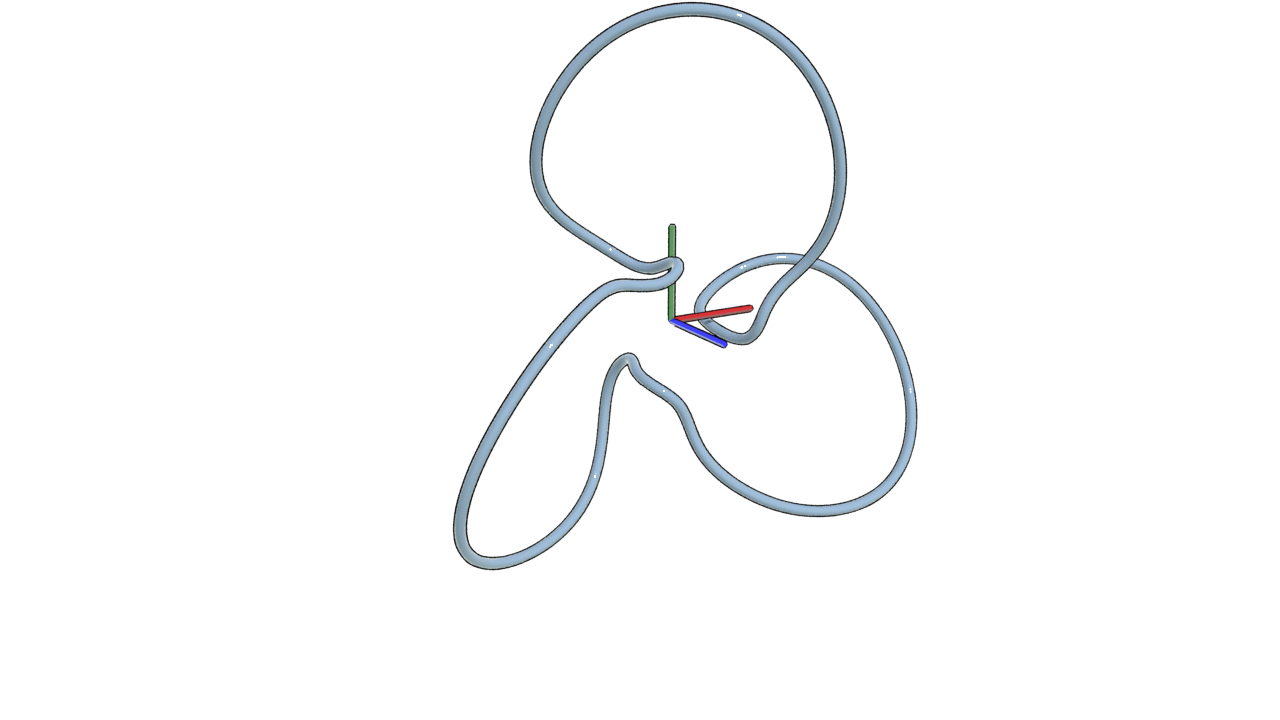}
    \end{center}
    \simtime{14}
    \end{minipage}
    \begin{minipage}[t]{0.18\textwidth}
        \begin{center}
        \includegraphics[width=1.0\textwidth,trim={10cm 0 10cm 0},clip]{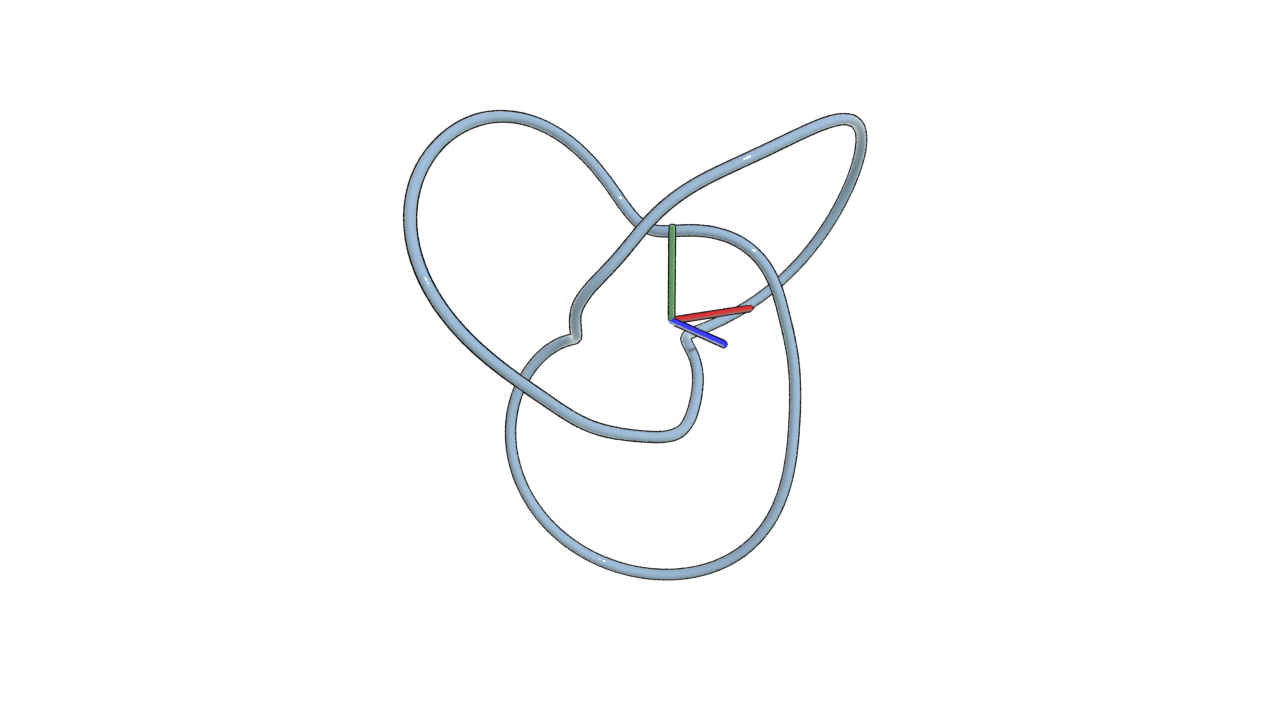}
        \end{center}
        \simtime{19}
    \end{minipage}
    \vrule
    \begin{minipage}[t]{0.17\textwidth}
    \begin{center}
        \includegraphics[width=1.0\textwidth,trim={10cm -2cm 10cm 0cm},clip]{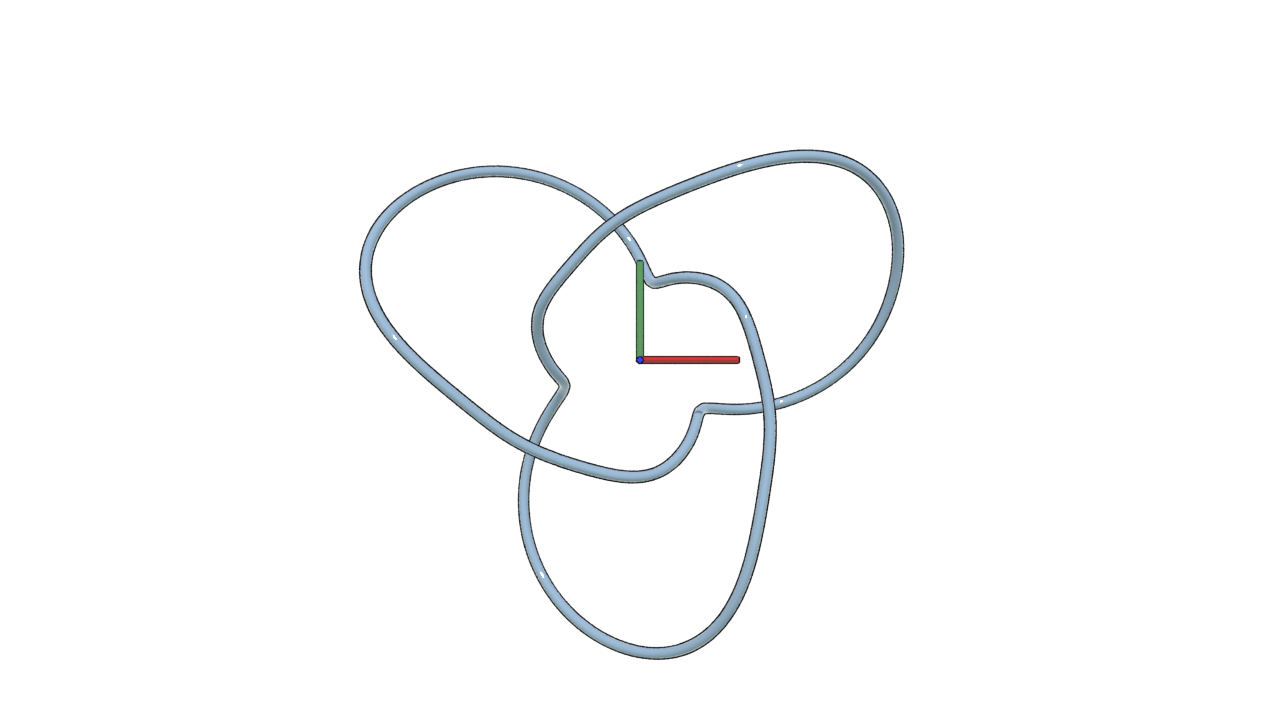}
    \end{center}
    \end{minipage}
     \hrule

\vspace{15pt}

Flow of $\on{hgrad}^{\Phi(\ell)}E$ with $\Phi(\ell)=10^{-5}\ell^2$\\ \vspace{5pt}
    \begin{minipage}[t]{0.18\textwidth}
    \begin{center}
    \includegraphics[width=1.0\textwidth,trim={10cm 0 10cm 0},clip]{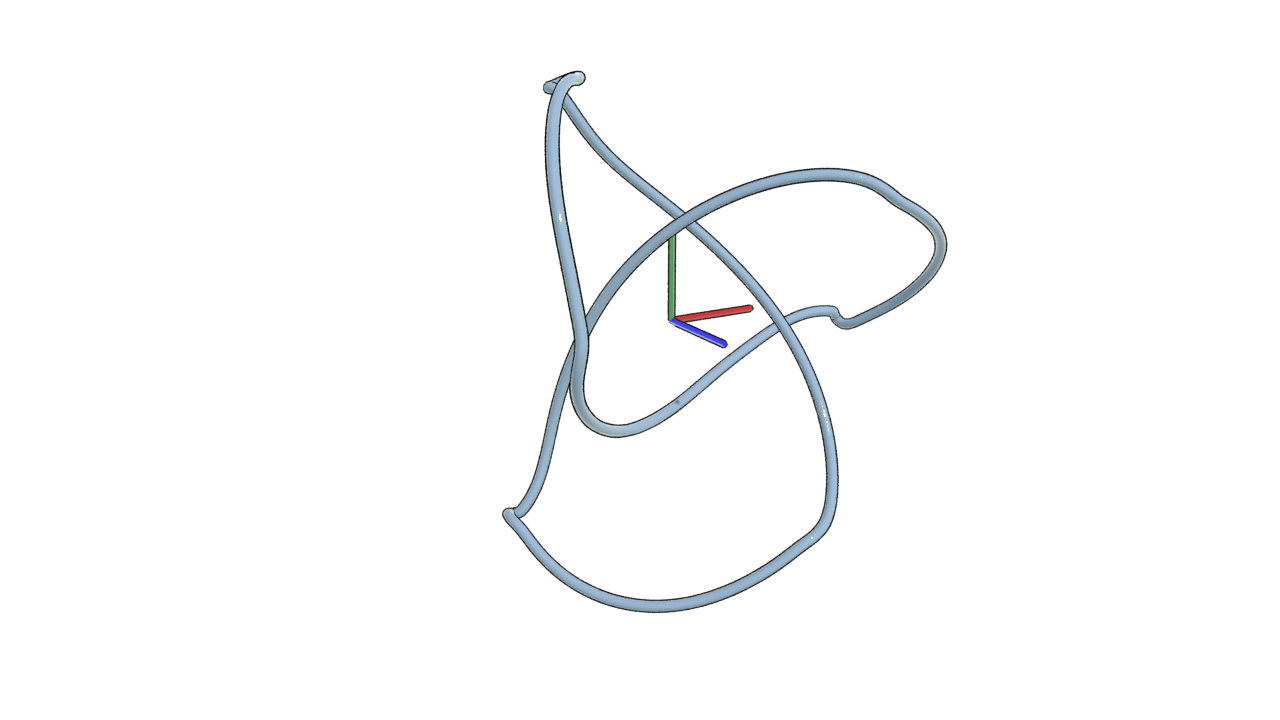}
    \end{center}
    \simtime{3}
    \end{minipage}
    \begin{minipage}[t]{0.18\textwidth}
    \begin{center}
    \includegraphics[width=1.0\textwidth,trim={10cm 0 10cm 0},clip]{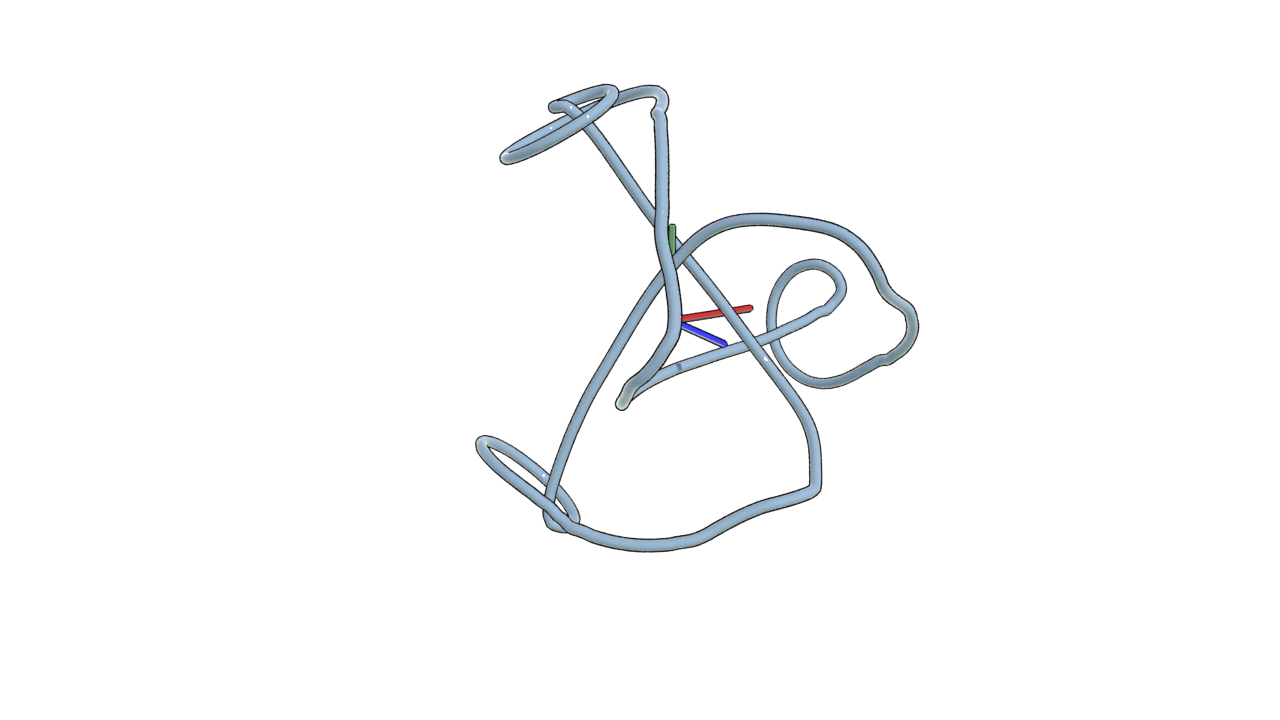}
    \end{center}
    \simtime{6}
    \end{minipage}
    \begin{minipage}[t]{0.18\textwidth}
    \begin{center}
    \includegraphics[width=1.0\textwidth,trim={10cm 0 10cm 0},clip]{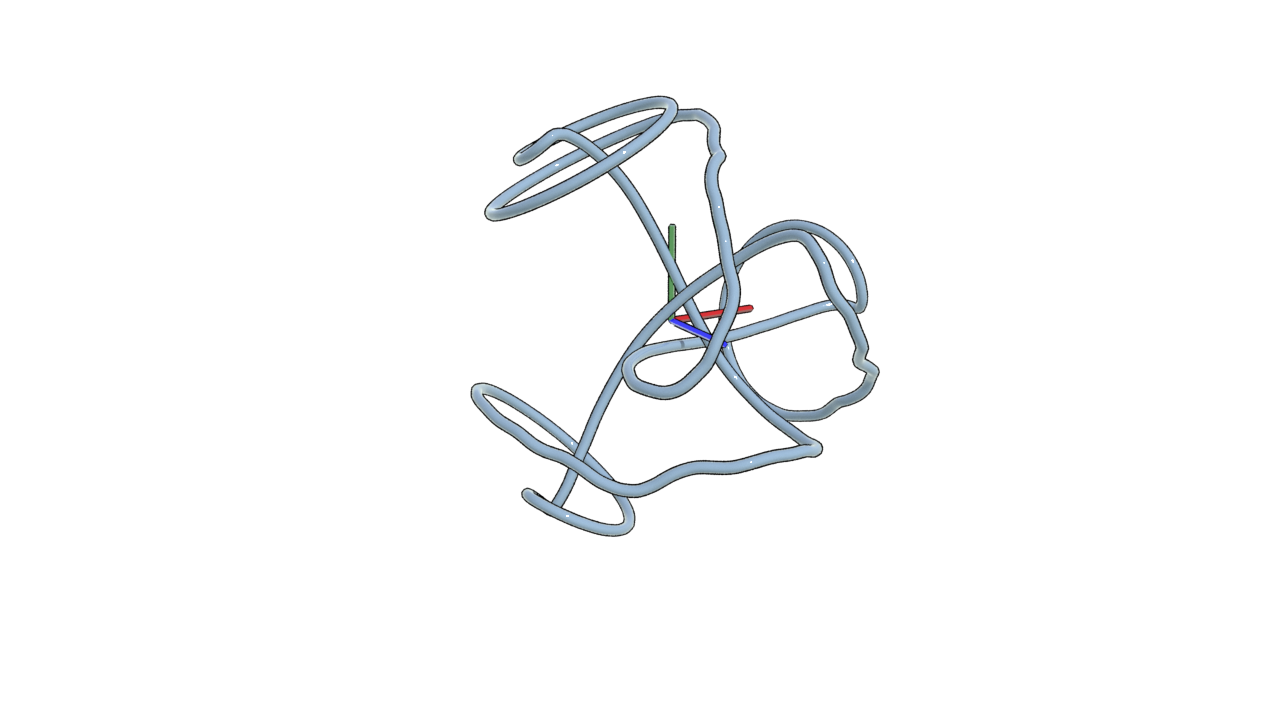}
    \end{center}
    \simtime{10}
    \end{minipage}
    \begin{minipage}[t]{0.18\textwidth}
    \begin{center}
    \includegraphics[width=1.0\textwidth,trim={10cm 0 10cm 0},clip]{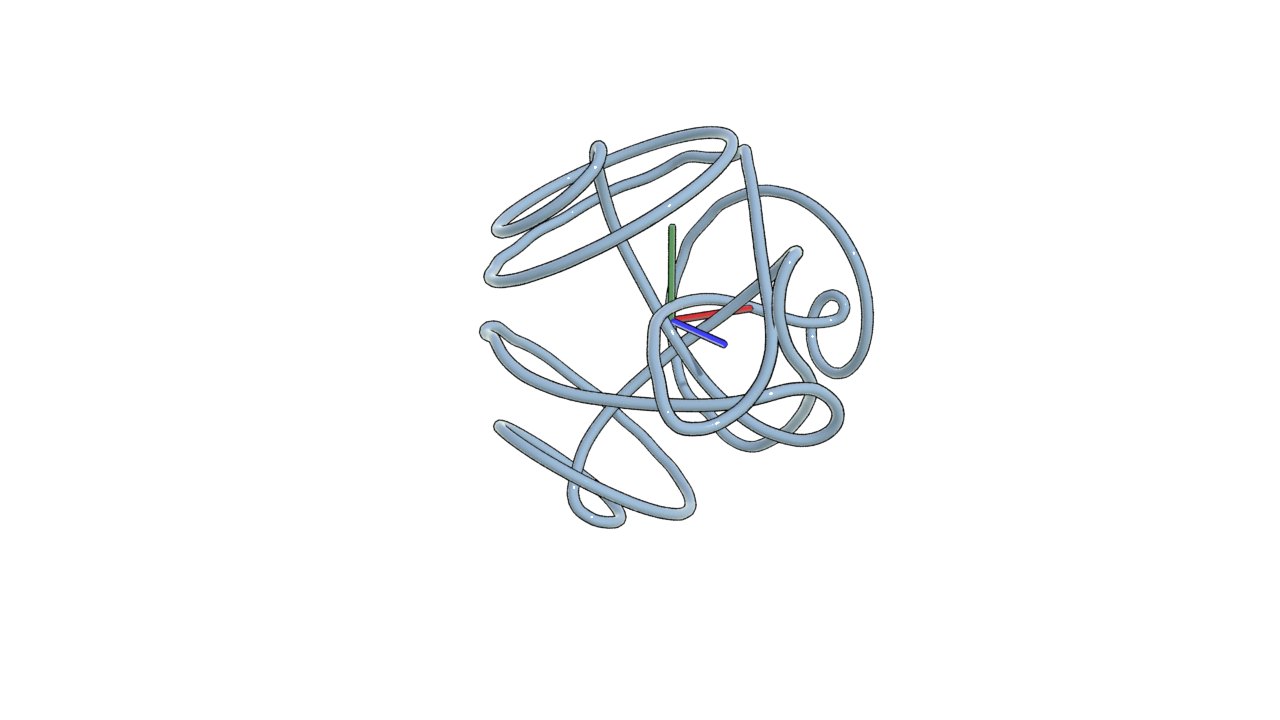}
    \end{center}
    \simtime{19}
    \end{minipage}
    \vrule
    \begin{minipage}[t]{0.17\textwidth}
        \includegraphics[width=1.\textwidth,trim={10cm -2cm 10cm 0cm},clip]{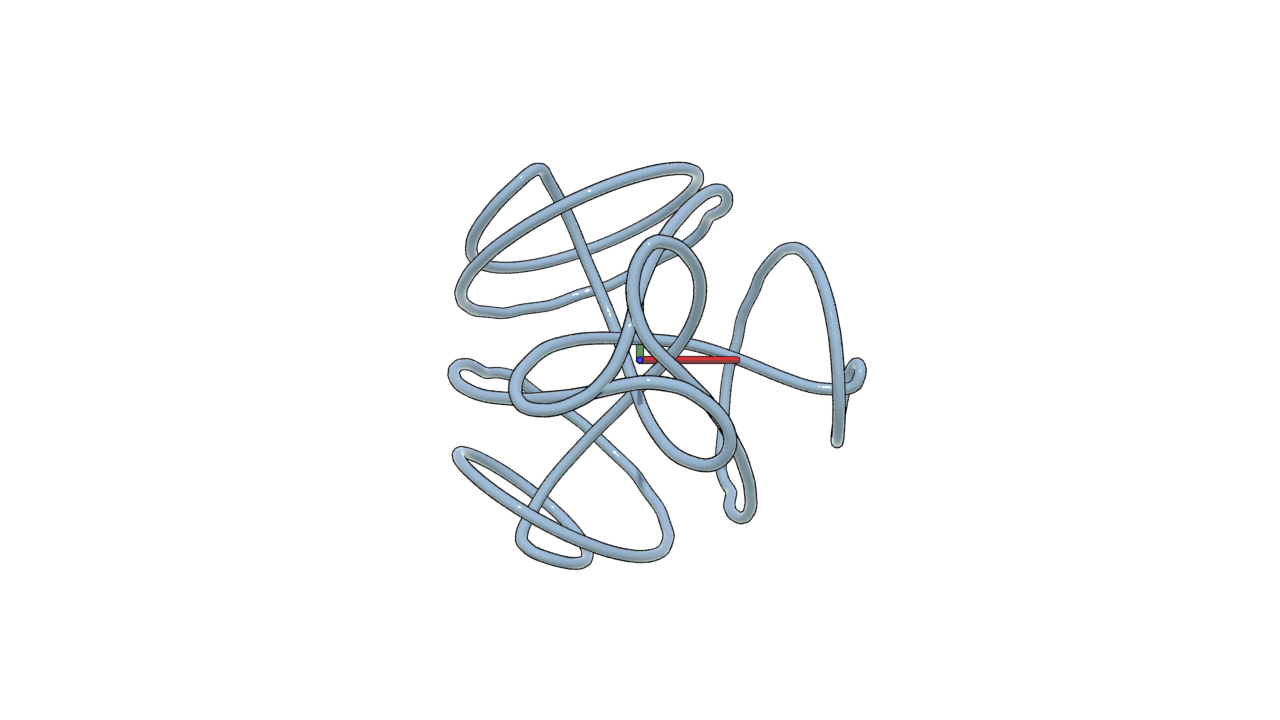}
    \end{minipage}
\hrule
    \caption{Hamiltonian flow of $\on{hgrad}^{\Om^{\Phi(\ell)}}E$ with different choices of $\Phi(\ell)$. In each row the initial curve, which is not shown, corresponds to the trefoil~\eqref{eq:trefoil}. The right-most images are the front-view of the last configurations of curves showing high symmetry for the  120-degree rotation around the z-axis.}
    \label{fig:flow_squaredScale}
\end{figure}

\begin{example}[Total squared scale]
    Our next example is the squared scale functional $E$ (Example~\ref{eg:squaredScale}). Here we test three different choices of $\Phi(\ell)=C\ell^{p}$. Note again that we vary $C$ only for computational purposes and this does not change the trajectory. The simulation results are shown in Figure \ref{fig:flow_squaredScale}.
    
    We first compute for $\Phi(\ell)=\frac{1}{20}$, which corresponds to (a constant multiple of) the Marsden-Weinstein flow $\on{hgrad}^{\OmMW}E$. The curve moves back and forth in the $z$-direction, but curve points tend to get stuck once they come closer to the origin as both the term $-D_s c\x c$ and the term $\frac{1}{2}|c|^2 D_s c\x D_s c$ decrease as $c$ goes to zero. As a result these parts form a complex shape around the origin. The next case is  $\Phi(\ell)=\frac{1}{20}\ell^{-1/10}$. This shows a behavior similar to the first case, but points do not get stuck near the origin due to the additional term in~\eqref{eq:hgrad_squaredScale}. While moving back and forth, the curve does not become as entangled as in the previous case and seems to alternately transform between a trefoil and a trivial knot. The last case is  $\Phi(\ell)=10^{-5}\ell^{2}$. This shows a very different evolution. Unlike the other test cases, the curve does not globally translate in the $z$-direction but forms a complex spiral  shape while shrinking slowly. In all three cases, the symmetry of the trefoil, i.e., that  rotation of 120 degrees around the $z$-axis does not change the shape, seems to be preserved in time. 
    

\end{example}


\appendix

\rev{
\section{Approach via an almost complex structure}\label{sec:almost_symp_approach}
In the main part of this article we constructed new symplectic structures on $B_i(S^1,\mathbb R^3)$, by alternating the Liouville 1-form of the MW symplectic form. Doing so
one obtained a skew symmetric 2-form on $\Imm(S^1,\R^3)$, which is automatically closed. Consequently to obtain a new symplectic structure it only remained to verify the non-degeneracy of this 2-form. 

One may find this approach somewhat artificial or ad-hoc and could imagine, that it would be easier to construct new symplectic structures via directly alternating the MW symplectic structure instead of its Liouville 1-form. Following this alternate strategy one would arrive directly at a non-degenerate 2-form, but would instead need to prove its closedness. 
In this appendix we will discuss that this approach fails in the sense that we were not able to construct any closed forms following this procedure; nevertheless it results in an interesting class of 2-forms and will discuss some of their properties in more details. 
}

On the shape space $B_i(S^1,\R^3)$, the mapping of 90 degrees rotation given  by
\begin{align*}
    \J \colon TB_i(S^1,\R^3) &\to T B_i(S^1,\R^3)\\
    \rev{\bar h} &\mapsto \rev{\overline{D_s c \times h}}
\end{align*}
is an almost complex structure, i.e., an isomorphism with $\J^2=-1$. 
The $L^2$-Riemannian metric $\bar G^\id$, the Marsden-Weinstein symplectic structure $\bar\Omega^{\operatorname{MW}}$, and the almost complex structure $\J$ formally define an almost K\"ahler structure (also called a compatible triple),
\begin{equation}
\bar\Omega^{\operatorname{MW}}(\rev{\bar h,\bar k})=\bar G^{\operatorname{id}}(\J(\rev{\bar h}),\rev{\bar k})
\end{equation}
on $B_i(S^1,\R^3)$ \footnote{This is not a K\"ahler structure in the classical sense, which additionally requires a complex structure i.e., the existence of holomorphic coordinates. Indeed the Marsden-Weinstein symplectic structure does not admit a complex structure \cite{Lempert1993LoopSA}; it has been shown that on the space of isometric mappings of a circle into $\R^3$ modulo Euclidean transformations there is indeed a K\"ahler structure closely related to the Marsden-Weinstein structure, but with a more complicated almost complex structure than $\J$~\cite{Millson1996Kaehler}, see also the comments in~\cite{needham2018kahler}.}.

This observation suggests to define a family of almost symplectic structures $\rev{\bar Z^L}$ via
\begin{equation}\label{eq:alternate_approach}
\rev{\bar Z}^{L}(\rev{\bar h,\bar k})=\bar G^{L}(\J(\rev{\bar h}),\rev{\bar k}).
\end{equation}
If $L$ is non-degenerate\rev{, self-adjoint with respect to $\bar G^{\id}$ and commutes with $\J$,} then $\rev{\bar Z}^{L}$ is by construction \emph{an almost symplectic structure}, i.e., $\rev{\bar Z}^{L}$ is skew-symmetric and non-degenerate.
To show that the induced forms are indeed symplectic it suffices thus to check the  closedness of $\rev{\bar Z}^{L}$. 

At a first glance this approach seems promising and simpler than the  approach \rev{presented in the main part of the paper}. However, \rev{as we will see in the following proposition, the 2-form}  $\rev{\bar Z}^{L}$ fails to be closed at least for all Riemannian metrics that are conformally equivalent (but not equal) to the $L^2$-metric: 
The non-closeness of $\rev{\bar Z}^{\la}$ \rev{is due to the following proposition.}
\rev{
\begin{proposition}\label{prop:no_conformal_symplectic}
    Let $(M,\omega)$ be a symplectic manifold (possibly orbifold) whose dimension is greater than $2$ (possibly infinite-dimensional). Then the only symplectic structures in the conformal class of $\omega$ are the connected component-wise constant multiples of $\omega$.
\end{proposition}
}
\begin{proof}
    \rev{
    Let $\omega^\lambda\coloneqq \lambda \omega$ be a 2-form with conformal factor $\lambda\colon M\to \R_{>0}$. Then we have
    \begin{align}
       d \omega^\lambda=\lambda d\omega+ d\lambda\wedge \omega 
       = d\lambda\wedge \omega.
    \end{align}
    }The 3-form \rev{$d\la\wedge \omega$} is not identically zero unless $\la$ is constant \rev{on each connected component}. To see this let us denote $X_H=\on{grad}^{\omega} H$ for a given function $H\colon M\to \R$. Then we have,
$$d\la\wedge \omega = 0 \iff 0=i_{X_H}(d\la\wedge \omega) = i_{X_H}d\la\wedge \omega - d\la \wedge i_{X_H}\omega = (\L_{X_H} \la).\omega - d\la\wedge dH \quad\forall H. $$
Since the two terms $(\L_{X_H} \la).\omega^\id$ and $ d\la\wedge dh$ have different ranks, they are both zero. Namely, $\L_{X_H} \la$ must be zero. Since at each point $x$ any tangent vector $h\in T_x M$ is locally realized as $X_H(x)$ by choosing  a suitable Hamiltonian $H$, $\la$ must be constant.
\end{proof}


While the above discussion is limited to Riemannian metrics that are conformally equivalent to the $L^2$-metric, it seems that a similar phenomenon is also true for more complicated (higher order) metrics. In particular, we were not able to construct any pair of an almost complex structure $\J$ and a non-conformal operator $L$  which satisfy the required invariance conditions and leads to a closed form \rev{$\bar Z^{L}$} on $B_i(S^1,\R^3)$.

This observation is the main reason why we proceeded to define our symplectic structures 
by altering the Liouville form, thereby ensuring closeness of the corresponding 2-form.   We may also reach types of symplectic forms by solving the non-closeness issue of the approach in this section via correcting the obtained 2-form, e.g. in the conformal case when $\la$ is not a constant and thus \rev{$Z^\la$} is not closed, we could add some $W\in d^{-1}(d\la\wedge \Om^\id) $ so that $Z^\lambda+W$ is closed. By doing this, the non-degeneracy property may be lost and thus one needs to check this again. Our approach for constructing a symplectic form from the Liouville form $\Th^\la$  amounts to choosing $W=-d\la \wedge \Th^\id$. We emphasize that there is a large degree of freedom in $d^{-1}(d\la\wedge \Om^\id)$ and our choice is not the unique one that makes the resulting form symplectic.

\rev{
\subsection*{(Locally) conformal symplectic structure}
Conformal symplectic geometry is a subbranch of symplectic geometry, which arose around 70's and has been studied mostly in finite-dimensional settings. 
 For references on the general theory of (locally) conformal symplectic geometries and their induced dynamics we refer the interested reader to~\cite{vaisman1985locally,yezzi2005conformal,allais2024conformalDampingDynamics}.

Following these references an almost symplectic structure  $\tilde\omega$ is said to be locally conformal symplectic if any point has an open neighborhood $U$ such that there is a function $f_U$ with $f_U\tilde \omega$ being symplectic in $U$. This condition is equivalent to $d\tilde\omega=\alpha\wedge \omega$ for some closed 1-form $\alpha$, called the Lee 1-form. If $\alpha$ satisfies the additional condition that $\alpha=df$ for some globally defined function $f$ then $f\tilde \omega$ is symplectic. In this case $\tilde\omega$ is simply called conformally symplectic.  

While the approach using \eqref{eq:alternate_approach} with a Riemannian metric that is conformally equivalent to the $L^2$-metric is unsuccessful for constructing new symplectic structures,  this procedure exactly leads to conformal symplectic structures. This raises the following open question:
\begin{question}
    [Locally but not globally conformal symplectic structures on $B_i(S^1,\R^3)$]
    A natural question one may ask in this context concerns the existence of a locally but not globally conformal symplectic structure. We note that $H^1(B_i(S^1,\R^3))\cong \R$ since 
 for the fundamental group we have $\pi_1(B_i(S^1,\R^3)) = \mathbb Z$; this follows from \cite{Smale59} and some standard arguments on fibrations and loop spaces; see \cite{Adachi93} for details. This observation suggests the existence of a non-degenerate $\omega$ s.t. $d\omega=\alpha\wedge \omega$ for a closed but non-exact Lie form $\alpha$.  We are, however,  unaware of the existence/non-existence of such an only locally conformal structure on $B_i(S^1,\R^3)$.
\end{question}
}

\rev{
\subsection{Hamiltonian vector fields of conformally symplectic structures}\label{sec:Hamiltonian_flow_locally_conformal}
Here we consider the analogue of Hamiltonian systems for conformally symplectic structures. 
We begin by recalling the definitions and basic properties of Hamiltonian systems induced by a locally conformally symplectic structure. In this setting  two types of Hamiltonian vector fields have been introduced: the first only preserves the Hamiltonian function but not  the conformal symplectic structure, whereas the second one only preserves the conformal symplectic structure (up to conformal changes), but not the Hamiltonian function. Note, that this in contrast to Hamiltonian systems induced by a genuine symplectic structure, which preserve both of them.

To define these two types of Hamiltonian vector fields let $\omega$ be a locally conformal symplectic structure on a manifold $M$ with a Lee form $\alpha$, i.e., $\alpha$ is a closed 1-form such that $d\omega=\alpha\wedge \omega$. Similar as for a classical Hamiltonian system we can defined the field
\begin{align}
    i_{X_H}\omega = dH
\end{align}
Indeed, the flow along $X_H$ preserves the Hamiltonian $H$ but does not preserve $\omega$, 
see e.g.~\cite{wojtkowski1998conformallyDynamics}.
The second type of Hamiltonian vector fields can be defined via
\begin{align}
    i_{Y_H}\omega = d_{\alpha}H
\end{align}
where $d_\alpha$ is the so-called twisted de Rham differential given by $d_\alpha=d-\alpha\wedge \cdot$. The vector field $Y_H$ preserves the locally conformal symplectic structure up to conformal change given by $\L_{Y_H}\omega=\alpha(Y_H)\omega$. In general, it does not preserve the Hamiltonian except under restrictive circumstances such as $H$ being constant, see~\cite{vaisman1985locally,maro2017aubry}. This type of Hamiltonian system can be used to model energy dissipating systems as studied in~\cite{allais2024conformalDampingDynamics, bhatt2016secondOrderConformal}. 

Next we will derive these two types of Hamiltonian systems in our case, i.e., for symplectic structures conformal to the MW structure on the space of space curves.


\subsubsection*{Horizontal Hamiltonian vector field via the standard differential}
We first compute the first type of Hamiltonian vector field $X_H$ via $i_{X_H}Z^\lambda = dH$. Using the same computational routine as in Section \ref{sec:hamiltonian_conformal}, we obtain the horizontal Hamiltonian vector field with respect to the standard $L^2$ metric $G^\id$,
\begin{align}
    X_H = - \frac{1}{\lambda}D_s c \times \on{grad}^{G^{\id}} H = \frac{1}{\lambda} \on{hgrad}^{\Omega^{\on{MW}}}H,
\end{align}
which is a multiple of the Marsden--Weinstein flow with scaling factor $\lambda$.
Note that $H$ is preserved in time by design and that
\begin{align}
    \L_{X_H}Z^{\lambda}
    &= i_{X_H} d(\lambda \OmMW) + d( i_{X_H}(\lambda\OmMW))\\
    & = i_{X_H}(d\lambda\wedge \OmMW)+ ddH
    = \L_{X_H}\lambda.\OmMW - \frac{d\lambda}{\lambda}\wedge dH,
\end{align}
which is not preserved (even conformally) unless $d\lambda$ and $dH$ are linearly dependent at each $c$.}

\rev{
\subsubsection*{Horizontal Hamiltonian vector field via the twisted differential}
Next we discuss horizontal Hamiltonian dynamics of the second type, i.e., we calculate the vector field $Y_H$ via the relation $i_{Y_H}Z^\lambda = d_{d\lambda}H$ with the twisted differential $d_{d\lambda}= d-d\la\wedge\cdot$ mentioned above. Using the relation
\begin{align}
    dH - Hd\lambda= d_{d\lambda}H=  i_{Y_H}Z^\lambda=\lambda i_{Y_H}\OmMW
\end{align}
and the computational routine in Section \ref{sec:hamiltonian_conformal}, we get 
\begin{align}
    \on{grad}^{G^{\id}}H - H  \on{grad}^{G^{\id}} \lambda = \lambda D_s c \times Y_H,
\end{align}
and hence 
\begin{align}
    Y_H 
    &= \frac{1}{\lambda}\left( -  D_s c \times \on{grad}^{G^{\id}} H  + H. D_s c \times \on{grad}^{G^{\id}} \lambda  \right) \\
    &= \frac{1}{\lambda}\left(\on{hgrad}^{\Omega^{\on{MW}}}H - H \on{hgrad}^{\Omega^{\on{MW}}} \lambda\right).
\end{align}
By direct computation, we see that
\begin{align}
    \L_{Y_H}H=H\L_{Y_H}\lambda, \quad \L_{Y_H}Z^\lambda=\L_{Y_H}\lambda.Z^\lambda
\end{align}
and 
\begin{align}
    \L_{Y_H}\lambda
    = \frac{1}{\lambda}(\L_{\on{hgrad}^{\Omega^{\on{MW}}}H}\lambda - H  \L_{\on{hgrad}^{\Omega^{\on{MW}}}\lambda}\lambda) 
    =  \frac{1}{\lambda}\L_{\on{hgrad}^{\Omega^{\on{MW}}}H}\lambda. 
\end{align}
Hence, if $\lambda$ is a conserved quantity under $\on{hgrad}^{\Omega^{\on{MW}}}H$ (equivalently $H$ is conserved along $\on{hgrad}^{\Omega^{\on{MW}}}\lambda$), then $H,\lambda$ and $Z^\lambda$ are exactly preserved and the flow of $Y_H$ behave like the Marsden--Weinstein Hamiltonian flow.  Note, that there are infinitely many commutative Hamiltonian systems with respect to the MW structure forming a  KdV-type hierarchy, cf.~\cite{chern_knöppel_pedit_pinkall_2020}. Therefore if we pick up any combination of the quantities within this hierarchy, we can find Hamiltonian systems induced by a conformal symplectic structure that resemble Hamiltonian systems induced by the MW structure. 


}

\section{Infinite dimensional weak symplectic manifolds}\label{app:weak_symplectic}
An infinite dimensional manifold modeled on convenient vector spaces, as described in \cite[Chapter VI]{kriegl_michor_1997convenient} admits a 2-form $\om\in \Om^2(M)$. We can view it as vector bundle homomorphism $\check\om:TM\to T^*M$. In general, this cannot be an isomorphism, but one can require that it injective:  the candidate for a \emph{weak symplectic structure}. This was the concept used in \cite[Section 48]{kriegl_michor_1997convenient} and in  \cite[Section 2]{Michor06a}. There was a gap in the proof of  \cite[Theorem 48.8]{kriegl_michor_1997convenient} which was repeated in \cite{Michor06a}: It was assumed that $\om$ in a local chart is constant. To remedy this  one has to add a further assumption to the definition of an infinte dimensional weak symplectic manifold; see \ref{app:def}\thetag{3}: The symplectic gradient of $\om$ with respect to itself should exist. For the convenience of the reader we present here the definition of a  weak symplectic manifolds with the further assumption and the basics up to \ref{app:thm} incuding  the proof which contains a gap in \cite[Theorem 48.8]{kriegl_michor_1997convenient}.

\subsection{Infinite dimensional weak symplectic manifolds}\label{app:def}
Let $M$ be a manifold,   
infinite dimensional in general, as described in \cite[Chapter VI]{kriegl_michor_1997convenient}.
                 
A $2$-form  $\om\in\Om^2(M)$ is called a 
{\it weak symplectic structure}\index{weak symplectic structure} 
on $M$ if the following three conditions holds:
\begin{enumerate}
\item $\om$ is closed, $d\om=0$. 
\item The associated vector bundle homomorphism
       $ \check\om: TM \to T^*M$ is injective. 
\item The gradient of $\om$ with respect to itself exists and is smooth;
this can be expressed most easily in charts, 
so let $M$ be open in a convenient vector space $E$. Then for $x\in M$ and 
$X,Y,Z\in T_xM=E$ we have \rev{for the derivative of $\omega$ in direction $X$, that}
$d\om(x)(X)(Y,Z)= \om(\Om_x(Y,Z),X) = \om(\tilde\Om_x(X,Y),Z)$
for smooth $\Om,\tilde\Om: M\x E\x E \to E$ which are bilinear in $E\x E$.  
\end{enumerate}

A $2$-form  $\om\in\Om^2(M)$ is called a 
{\it strong symplectic structure}\index{strong symplectic structure} 
on $M$ if it is closed $(d\om=0)$ and if 
its associated vector bundle homomorphism
$ \check\om: TM \to T^*M$ is invertible with smooth 
inverse. In this case, the vector bundle $TM$ has reflexive fibers 
$T_xM$: Let $i:T_xM\to (T_xM)''$ be the canonical mapping onto the 
bidual. Skew symmetry of $\om$ is equivalent to the fact that the 
transposed $(\check\om)^t=(\check\om)^*\o i:T_xM\to (T_xM)'$ satisfies 
$(\check\om)^t=-\check\om$. Thus, $i=-((\check\om)\i)^*\o  \check\om$  is an 
isomorphism.

\subsection{Cotangent bundles}\label{app:cotangent}
Every cotangent bundle $T^*Q$, viewed as a manifold, carries a 
canonical weak symplectic structure $\om_Q\in\Om^2(T^*Q)$, 
which is defined as follows. Note that this work only with convenient calculus.  
Let 
$\pi_Q^*:T^*Q\to Q$ be the projection. Then the 
{\it Liouville form}\index{Liouville form} $\th_Q\in \Om^1(T^*Q)$ is given by 
$\th_Q(X)=\langle \pi_{T^*Q}(X),T(\pi_Q^*)(X)\rangle $ for  
$X\in T(T^*Q)$,
where $\langle \quad,\quad\rangle$ denotes the duality pairing 
$T^*Q\x_Q TQ\to \mathbb R$. Then the symplectic structure on $T^*Q$ is 
given by $\om_Q = - d\th_Q$, which of course in a local chart looks 
like 
$\om_E((v,v'),(w,w'))=\langle w',v\rangle_E-\langle v',w\rangle_E$.
The associated mapping $ \check\om: T_{(0,0)}(E\x E')=E\x E' 
\to E'\x E''$ is given by $(v,v')\mapsto (-v',i_E(v))$,
where $i_E:E\to E''$ is the embedding into the bidual. 
So the canonical symplectic structure on $T^*Q$ is strong if and only if 
all model spaces of the manifold $Q$ are reflexive and Hilbert spaces. 

\subsection{The $\om$-smooth cotangent space}\label{app:om-cotangent} 
For a weak symplectic manifold $(M,\om)$ 
let $T_x^\om M$ denote the real linear subspace 
$T_x^\om M= \check\om_ x(T_x M) \subset T_x^*M = L(T_x M,\mathbb R)$,
and let us call it the {\it $\om$-smooth cotangent space} with respect to 
the symplectic structure $\om$
of $M$ at $x$ in view of the embedding of test functions
into distributions. \rev{Note, that the convenient structure on $T_x^\om M$ 
is the one from $T_xM$ and not the one from $T_x^*M$.
These vector spaces fit together to form a vector bundle  which 
is isomorphic to the tangent bundle $TM$ via 
$ \check\om: TM\to T^\om M \subseteq T^*M$, together with a smooth injective mapping $T^\om M \to T^*M$. }

Note that only for strong symplectic structures the mapping 
$\check\om_x:T_xM\to T_x^*M$ is a diffeomorphism onto 
$T_x^\om M$ with the structure induced from $T_x^*M$.

\subsection{$\om$-smooth functions} \label{app:om-smooth}
For a weak symplectic manifold  
$(M,\om)$
let 
$$
C^\infty_\om (M,\mathbb R) \subset C^\infty(M,\mathbb R)
$$
denote the subalgebra consisting of all smooth functions  $f:M \to  \mathbb R$ satisfying the following equivalent (by \cite[Lemma 48.6]{kriegl_michor_1997convenient}) conditions:
These are exactly those smooth functions on $M$ which admit a smooth 
$\om$-gradient $\on{grad}^\om f\in \X(M)$. 
\begin{enumerate}
\rev{
\item $df:M \to   T^*M'$ factors to a
        smooth mapping $M\to  T^\om$. 
\item $f$ has a smooth $\om$-gradient 
        $\on{grad}^\om f\in \X(M) = \Ga(TM)$ which for each $Y\in T_xM$ 
        satisfies  
        $df(x)Y = \om(\on{grad}^\om f(x),Y)$.
}\end{enumerate}

\begin{theorem} \label{app:thm}
Let $(M,\om)$ be a weak symplectic manifold. 
The Hamiltonian mapping 
$\on{grad}^\om :C^\infty_\om (M,\mathbb R) \to  \X(M,\om) :=\{X\in \X(M):\L_X\om =0 \}$, which is given by
\begin{displaymath}
i_{\on{grad}^\om f}\om=df\quad\text{ or }\quad 
     \on{grad}^\om f:= (\check\om)\i\o df
\end{displaymath}
is well defined. Also the Poisson bracket 
\begin{align*}
\{\quad,\quad\}&: C^\infty_\om (M,\mathbb R) \x
C^\infty_\om (M,\mathbb R) \to C^\infty_\om (M,\mathbb R)
\\
\{f,g\}&:= i_{\on{grad}^\om f}i_{\on{grad}^\om g}\om = 
     \om(\on{grad}^\om g,\on{grad}^\om f) = 
dg(\on{grad}^\om f) = (\on{grad}^\om f)(g) 
\end{align*}is well defined and gives a Lie algebra structure to
the space $C^\infty_\om (M,\mathbb R)$, which also fulfills
\begin{displaymath}
\{f,gh\}=\{f,g\}h+g\{f,h\}.
\end{displaymath}
We equip $C^\infty_\om(M,\mathbb R)$ with the initial structure 
with respect to the the two following mappings:
$$
C^\infty_\om(M,\mathbb R) \xrightarrow{\subset} C^\infty(M,\mathbb R),\qquad
C^\infty_\om(M,\mathbb R) \xrightarrow{\on{grad}^\om} \X(M).
$$
Then the Poisson bracket is bounded bilinear on $C^\infty_\om(M,\mathbb R)$. 

We have the following long exact sequence of Lie algebras
and Lie algebra homomorphisms:
\begin{displaymath}
0\to  H^0(M)\to C^\infty_\om (M,\mathbb R)  
     \xrightarrow{{\on{grad}^\om}}\X(M,\om) \xrightarrow{\ga} H^1_\om(M) \to 0,
\end{displaymath}
where $H^0(M)$ is the space of locally constant functions, and 
\begin{displaymath}
H^1_\om(M)=\frac{\{\ph\in C^\infty(M\gets T^\om M):d\ph=0\}}
     {\{df:f\in C^\infty_\om(M,\mathbb R)\}}
\end{displaymath}
is the first symplectic cohomology space of $(M,\om)$, a linear 
subspace of the De~Rham cohomology space $H^1(M)$.
\end{theorem}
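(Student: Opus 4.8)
The plan is to establish the four assertions — well-definedness of $\on{grad}^\om$ with values in $\X(M,\om)$; well-definedness of the Poisson bracket together with its Lie-algebra and Leibniz properties; its boundedness for the initial structure; and exactness of the displayed sequence — in that order, using Cartan's calculus of forms on convenient manifolds and the three defining properties of a weak symplectic structure. The extra condition \thetag{3} of \ref{app:def} enters whenever one passes to a chart and differentiates an expression built from $(\check\om)\i$: because $\om$ is not locally constant and admits no Darboux normal form, its derivative $D\om$ must be rewritten through $\om$ and the fields $\Om,\tilde\Om$ of \thetag{3}; this is precisely the step that \cite[Theorem 48.8]{kriegl_michor_1997convenient} skipped.

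First I would handle the Hamiltonian map. For $f\in C^\infty_\om(M,\R)$ the differential $df$ factors smoothly through the subbundle $T^\om M=\check\om(TM)$ by the definition of $C^\infty_\om$, and $\check\om\colon TM\to T^\om M$ is a bundle isomorphism onto $T^\om M$ (whose convenient fibre structure is transported from $TM$); hence $\on{grad}^\om f=(\check\om)\i\o df$ is a genuine smooth vector field, unique with $i_{\on{grad}^\om f}\om=df$ by injectivity of $\check\om$. It is symplectic by $d\om=0$ and Cartan's formula:
\[
\L_{\on{grad}^\om f}\om=d\,i_{\on{grad}^\om f}\om+i_{\on{grad}^\om f}\,d\om=d(df)=0 .
\]

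Next, writing $X_f=\on{grad}^\om f$, I would use $\L_{X_f}\om=0$, the identity $i_{[X,Y]}=\L_X i_Y-i_Y\L_X$ on forms, and $d\,i_{X_g}\om=\L_{X_g}\om-i_{X_g}d\om=0$ to get
\[
i_{[X_f,X_g]}\om=\L_{X_f}i_{X_g}\om=d\,i_{X_f}i_{X_g}\om=d\bigl(\om(X_g,X_f)\bigr).
\]
Thus $\{f,g\}:=\om(X_g,X_f)$ is smooth with smooth $\om$-gradient $[X_f,X_g]$, so $\{f,g\}\in C^\infty_\om(M,\R)$, $\on{grad}^\om\{f,g\}=[X_f,X_g]$, and $\X(M,\om)$ is a subalgebra of $\X(M)$ since $\L_{[X,Y]}\om=\L_X\L_Y\om-\L_Y\L_X\om=0$. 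From $i_{X_g}\om=dg$ one reads $\{f,g\}=dg(X_f)=X_f(g)$, which gives skew-symmetry, the Leibniz rule $\{f,gh\}=X_f(gh)=\{f,g\}h+g\{f,h\}$, and the Jacobi identity via $\{\{f,g\},h\}=[X_f,X_g](h)=X_f(\{g,h\})-X_g(\{f,h\})$. Boundedness of the bracket on $C^\infty_\om(M,\R)$ with the stated initial structure reduces to boundedness of $(f,g)\mapsto dg(\on{grad}^\om f)$ into $C^\infty(M,\R)$ and of $(f,g)\mapsto[\on{grad}^\om f,\on{grad}^\om g]$ into $\X(M)$, both of which follow from boundedness of $d$, of $\on{grad}^\om$, of the evaluation pairing, and of the Lie bracket.

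Finally, for the sequence I would set $\ga(X)=[i_X\om]$, which is legitimate since $i_X\om=\check\om(X)$ is a section of $T^\om M$ with $d\,i_X\om=\L_X\om=0$ for $X\in\X(M,\om)$. Exactness at $C^\infty_\om(M,\R)$ is the chain $\on{grad}^\om f=0\iff df=0\iff f$ locally constant, which also makes $H^0(M)\hookrightarrow C^\infty_\om(M,\R)$ injective; exactness at $\X(M,\om)$ uses injectivity of $\check\om$, as $\ga(X)=0$ forces $i_X\om=df$ for some $f\in C^\infty_\om(M,\R)$, whence $X=\on{grad}^\om f$; surjectivity of $\ga$ holds because any closed $\ph\in C^\infty(M\gets T^\om M)$ equals $\check\om(X)$ for the smooth field $X=(\check\om)\i\o\ph$, which satisfies $\L_X\om=d\ph=0$; and $\ga$ is a homomorphism into the abelian algebra $H^1_\om(M)$ by $\ga([X,Y])=[d(\om(Y,X))]=0$ from the previous step. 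I expect the main obstacle to be not this Cartan bookkeeping but the attendant regularity: checking in charts, and without a normal form for $\om$, that every vector field produced is a smooth section of $TM$ and every $1$-form stays inside the non-splitting subbundle $T^\om M$. This is exactly where \thetag{3} is indispensable, and where the proof of \cite[Theorem 48.8]{kriegl_michor_1997convenient} had its gap.
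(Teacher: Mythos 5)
Your proof is correct in its overall architecture and actually covers more of the theorem's claims (Jacobi, Leibniz, full exactness of the sequence) than the paper's proof, which concentrates almost entirely on the single delicate point that $\{f,g\}\in C^\infty_\om(M,\mathbb R)$. But for that key point you take a genuinely different route. The paper works in a chart, applies the product rule to $\{f,g\}(x)=dg(x)(\on{grad}^\om f(x))$, and then uses the new hypothesis \ref{app:def}\thetag{3} twice — once to rewrite the term $D\om(x)(\on{grad}^\om f)(\on{grad}^\om g,y)$ as $\om(\tilde\Om_x(\cdot,\cdot),y)$, and once inside the expanded form of $\L_{\on{grad}^\om f}\om=0$ — so that all contributions collect into a single $\om(\,\cdot\,,y)$, exhibiting the $\om$-gradient of $\{f,g\}$. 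You instead derive $d\{f,g\}=\L_{X_f}i_{X_g}\om=i_{[X_f,X_g]}\om$ intrinsically from the graded commutation relations $[\L_X,i_Y]=i_{[X,Y]}$, $[i_X,d]=\L_X$, $[\L_X,d]=0$, which immediately identifies $[X_f,X_g]$ as a smooth $\om$-gradient of $\{f,g\}$. This is cleaner, gives $\on{grad}^\om\{f,g\}=[X_f,X_g]$ (and hence the Jacobi identity and the homomorphism property of $\on{grad}^\om$) for free, and matches the paper's end result, whose displayed final formula in fact contains typos ($[\on{grad}^\om f,\on{grad}^\om f]$ should be $[\on{grad}^\om f,\on{grad}^\om g]$).

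One point of tension you must resolve, though: your closing claim that condition \thetag{3} is "indispensable" and "enters whenever one passes to a chart" is contradicted by your own argument, which never passes to a chart and never invokes \thetag{3}. Either the Cartan relations you cite are available in the convenient setting for kinematic forms without any extra hypothesis on $\om$ — in which case your proof shows that \thetag{3} is not actually needed for this theorem, a claim you should then make explicitly and defend against the paper's stated reason for introducing \thetag{3} — or the validity of $[\L_X,i_Y]=i_{[X,Y]}$ applied to $\om$ and of $\L_X\om=i_Xd\om+di_X\om$ in this generality is exactly where the chart-level difficulty resurfaces, and you need to verify the precise hypotheses under which \cite[33.18--33.19]{kriegl_michor_1997convenient} establishes these identities for the class of forms containing $\om$. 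As written, you assert both that the Cartan bookkeeping suffices and that \thetag{3} is indispensable; pick one and justify it.
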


\begin{proof}
It is clear from \ref{app:om-smooth}, that the
Hamiltonian mapping $\on{grad}^\om$ is well defined 
and has values in $\X(M,\om)$, 
since by \cite[34.18.6 ]{kriegl_michor_1997convenient}, we have
\begin{displaymath}
\mathcal{L}_{\on{grad}^\om f}\om = i_{\on{grad}^\om f}d\om + 
     di_{\on{grad}^\om f}\om =ddf =0.
\end{displaymath}
By \cite[34.18.7]{kriegl_michor_1997convenient}, the space $\X(M,\om)$ is a Lie 
subalgebra of $\X(M)$.
The Poisson
bracket is well defined as a mapping
$\{\quad,\quad\}: C^\infty_\om (M,\mathbb R) \x
C^\infty_\om (M,\mathbb R) \to  
C^\infty(M,\mathbb R) $;
it only remains to check that it has values in the subspace
$C^\infty_\om (M,\mathbb R)$.  

This is a local question, so we may assume that $M$ is an open subset 
of a con\-ven\-ient vector space $E$ equipped with a (non-constant) weak 
symplectic structure. 
So let $f$, $g\in C^\infty_\om (M,\mathbb R)$ and $X,Y,Z\in E$ then
$\{f,g\}(x) = dg(x)(\on{grad}^\om f(x))$, and thus
\begin{align*} 
&d(\{f,g\})(x)y
= d(dg(\quad)y)(x).\on{grad}^\om f(x) + dg(x)(d(\on{grad}^\om f)(x)y)
\\&
 = d\Big(\om(\on{grad}^\om g(\quad),y)\Big)(x).\on{grad}^\om f(x)
     + \om\Bigl(\on{grad}^\om g(x),d(\on{grad}^\om f)(x)y\Bigr)
\end{align*}
We have $\on{grad}^\om f\in\X(M,\om)$ and for any
$X\in\X(M,\om), Y\in\X(M), y\in E$ the condition 
$\mathcal{L}_X\om=0$ implies, using \ref{app:def}.3,
\begin{align*}
0&=(\mathcal L_X\om)(Y,y) = (d\om(X))(Y,y) - \om([X,Y],y) -\om(Y,[X,y])  
\\&
= \om(\tilde\Om(X,Y),y) - \om([X,Y],y) + \om(Y,dX(y_2)).
\end{align*}
Again by \ref{app:def}.3 we have
\begin{align*}
&d(\om(\on{grad}^\om g,y)(\on{grad}^\om f) =
\\&
= d\om(\on{grad}^\om f)(\on{grad}^\om g,y)  
+\om(d(\on{grad}^\om g)(\on{grad}^\om f),y)
\\&
= \om(\tilde\Om(\on{grad}^\om f,\on{grad}^\om g),y)  
+\om(d(\on{grad}^\om g)(\on{grad}^\om f),y)
\end{align*}
Collecting all terms we get
\begin{align*}
&d(\{f,g\})(x)y =
\\&
= d\Big(\om(\on{grad}^\om g(\quad),y)\Big)(x).\on{grad}^\om f(x)
     + \om\Bigl(\on{grad}^\om g(x),d(\on{grad}^\om f)(x)y\Bigr)
\\&
=\om\Big(
\tilde\Om_x(\on{grad}^\om f(x),\on{grad}^\om g(x))
+ d(\on{grad}^\om g)(x)(\on{grad}^\om f(x))
\\&\qquad\qquad
+[\on{grad}^\om f,\on{grad}^\om f](x)
-\tilde\Om_x(\on{grad}^\om f(x),\on{grad}^\om g(x)),
y
\Big)
\\&
=\om\Big(
d(\on{grad}^\om g)(x)(\on{grad}^\om f(x))
+[\on{grad}^\om f,\on{grad}^\om f](x),
y
\Big)
\end{align*}
So \ref{app:om-smooth} is satisfied, and thus 
$\{f,g\}\in C^\infty_\om (M,\mathbb R)$.

If $X\in\X(M,\om)$ then $di_X\om=\mathcal{L}_X\om = 0$, so 
$[i_X\om]\in H^1(M)$ is well defined, and by $i_X\om= \check\om\ o X$ 
we even have $\ga(X):=[i_X\om]\in H^1_\om(M)$, so $\ga$ is well 
defined. 
\end{proof}

\bibliography{bibliography}
\bibliographystyle{abbrv}

\end{document}